\documentclass{llncs}

\usepackage[english]{babel}
\usepackage[utf8]{inputenc}
\usepackage[T1]{fontenc}
%usepackage{lmodern}
\usepackage{amsfonts}
\usepackage{stmaryrd}
\usepackage{amsmath,amssymb}
\usepackage{array}
\usepackage{graphicx}
\usepackage{nicefrac}
\usepackage{enumerate}
\usepackage{caption}
\usepackage{color}
\usepackage{tikz}
\usepackage{url}

\newcommand{\lsim}{\mathrel{\hbox{\rlap{\lower.55ex \hbox{$\sim$}} \kern-.3em \raise.7ex \hbox{$<$}}}}
\newcommand{\gsim}{\mathrel{\hbox{\rlap{\lower.55ex \hbox{$\sim$}} \kern-.3em \raise.7ex \hbox{$>$}}}}

%\captionsetup{margin=30pt,font=small,labelfont=bf, labelsep=endash, labelfont=sc, justification=centering, textfont=it}

\newcommand{\mc}[1]{\ensuremath{\mathcal{#1}}}
\newcommand{\bs}[1]{\ensuremath{\boldsymbol{#1}}}

\def\ao{$\mathsf{and/or}$ }
\def\a{$\mathsf{and}$ }

\def\true{\mathsf{true}}
\def\false{\mathsf{false}}
\def\rat{\mathsf{rat}}

\newcommand{\A}{\mathcal{A}}

\newcommand{\F}{\mathcal{F}}

\newcommand{\I}{\mathcal{I}}
\newcommand{\M}{\mathcal{M}}

\newcommand{\T}{\mathcal{T}}

\newcommand{\e}{\mathtt{e}}

\newcommand{\IN}{\mathbb{N}}
\newcommand{\IP}{\mathbb{P}}

\bibliographystyle{splncs}

\begin{document}

\frontmatter         

\pagestyle{headings}  
\mainmatter      

\title{Catalan Satisfiability Problem\thanks{Partially supported by  the A.N.R. project {\em  BOOLE}, 09BLAN0011.}}

\titlerunning{Catalan Satisfiability Problem} 

\author{Antoine Genitrini\inst{1} \and Cécile Mailler\inst{2}}

\authorrunning{Genitrini et al.} 

\institute{Laboratoire d'Informatique de Paris 6;
%CNRS UMR 7606 and Université Pierre et Marie Curie,
%4 place Jussieu, 75005 Paris, France.
\email{Antoine.Genitrini@lip6.fr}.
\and
Laboratoire de Mathématiques de Versailles;
%CNRS UMR 8100 and Université de Versailles Saint-Quentin-en-Yvelines,
%45 avenue des \'Etats-Unis, 78035 Versailles, France.
\email{Cecile.Mailler@uvsq.fr}.
}

\maketitle             

\begin{abstract}
An \ao tree is usually a binary plane tree, with internal nodes labelled by logical connectives,
and with leaves labelled by literals chosen in a fixed set of $k$ variables and their negations.
In the present paper, we introduce the first model of such Catalan trees, whose number of
variables $k_n$ is a function of $n$, the size of the expressions.
We describe the whole range of the probability distributions
depending on the functions $k_n$, as soon as it tends jointly with $n$ to infinity.
As a by-product we obtain a study of the satisfiability problem in the context of Catalan trees.

Our study is mainly based on analytic combinatorics and extends the Kozik's
\emph{pattern theory}, first developed for the fixed-$k$ Catalan tree model.

\keywords{Random Boolean expressions; Boolean formulas; Boolean functions; Probability distribution; Satisfiability; Analytic combinatorics.}
\end{abstract}

\section{Introduction}

Since years many scientists of different areas,
e.g. computer scientists, mathematicians or statistical physicists,
are studying satisfiability problems (like $k$-SAT problems) and some questions that arise around them:
for example,
phase transitions between satisfiable and unsatisfiable expressions or constraints satisfaction problems.
The classical 3-SAT problem takes into consideration expressions of a specific form: conjunction of clauses
that are themselves disjunctions of three literals. The literals are chosen among a finite set whose size 
is linked to the size of the expression. Then one question consists of deciding if a large random expression
is satisfiable or not.
Actually we know among other things, see~\cite{AM06} for example,
that the satisfiability problem is related to the ratio between the size
of the expression and the number of allowed literals. 
There is a phase transition such that, 
when the ratio is smaller than a critical value, 
the random expression is satisfiable with probability tending to~$1$ 
when the size of the expression tends to infinity,
while when the ratio is larger than the critical value, 
the probability tends to~$0$.

An interesting paper~\cite{DR11} about constraints satisfaction problems
deals with random $2-XORSAT$ expressions. Using generating functions, in the context of analytic combinatorics' tools,
the authors describe precisely the phase transition between satisfiable and unsatisfiable expressions.

Still dealing with Boolean expressions, but in a completely distinct direction,
researchers have studied the complete probability distribution on Boolean functions
induced by random Boolean expressions.
The first approach, by Lefmann and Savick\'y~\cite{LS97}, consists in fixing a finite set of variables,
 allowing the two logical connectives \a and $\mathsf{or}$ and choosing uniformly at random a Boolean
  expression of \emph{size} $n$ in this logical system.
Their model is usually called the \emph{Catalan model}.
 Lefmann and Savick\'y first proved the existence of a limiting probability distribution
  on Boolean functions when the size of the random Boolean expressions tends to infinity.
Since the seminal paper by Chauvin et al.~\cite{CFGG04},
 almost all quantitative studies of such Boolean distributions are deeply related to analytic combinatorics:
  a survey by Gardy~\cite{gardy06} provides a wide range of models with various numerical results.
Later, Kozik~\cite{kozik08} proved a strong relation between the limiting probability of a given function and its \emph{complexity}
(i.e. the minimal \emph{size} of an expression representing the function).
His approach lies in two separate steps:
(i) first let the size of the Boolean expressions taken into consideration tend to infinity, and then 
(ii) let the number of variables used to label the expressions tend to infinity.
His powerful machinery, the \emph{pattern theory},
easily classifies and counts large expressions according to structural constraints.
The main objection to this model is about the two consecutive limits that cannot be interchanged: 
the number of allowed variables cannot depend on the size of the expressions.
%Finally, are the results really relevant?
Genitrini and Kozik have proposed another model~\cite{GKZ07,GK12} that allows to understand
the bias by constructing random Boolean expressions built on an infinite set of variables. 
However, according to our knowledge, no possibility to link the number of variables to the size
has yet been presented, and understanding satisfiability problems in this context is not yet possible.

Our paper extends the Catalan model in order to fit in the satisfiability context.
By using an equivalence relationship on Boolean expressions, we manage to let both
the number of variables and the size of expressions tend jointly to infinity. The number of variables is
a function of the size of the expressions and thus we deal with satisfiability in the context of Catalan expressions.
Furthermore by extending the techniques of Kozik, we describe in details the probability distribution on functions
and exhibit some threshold for the latter distribution:
as soon as the number of variables is \emph{large enough} compared to the size of the expressions,
the general behaviour of the induced probability on Boolean functions does not change by adding more variables.

The paper is organized as follows.
Section~\ref{sec:model} introduces our unified model based
on an equivalence relationship of Boolean expressions. 
Then, Section~\ref{sec:results} states our three main results:
(1) the satisfiability question for random Catalan expressions;
(2) the link between the probability of a class
 of functions and the complexity of the functions taken into account;
(3) the behaviour of the probability related to the dynamic
 between the number of variables and the size of the expressions.
Section~\ref{sec:technical} is devoted to the technical core of the paper.
Finally Section~\ref{sec:proba} applies our approach to \ao trees and proves the main results.

Almost all proofs are given in the appendices.

\section{Probability distributions on equivalence classes of Boolean functions}\label{sec:model}

\subsection{Contextual definitions}

A Boolean function is a function from $\{0,1\}^{\IN}$ into $\{0,1\}$.
The set of Boolean functions is denoted by $\F$. In the following, $\{x_1, x_2, \dots\}$
will be an element of $\{0,1\}^{\IN}$.
 A variable $x_i$ can be negated ($\bar{x}_i = 1-x_i$), and we call {\bf literal} a variable or its negation.
The two connectives taken into account, \a and $\mathsf{or}$, are respectively denoted by $\land$ and $\lor$.

An \ao Boolean expression is seen as an \emph{\ao tree} i.e. a binary plane tree with leaves labelled
by a literal and with internal nodes labelled by connectives. Each \ao tree computes (or represents)
 a Boolean function. Obviously an infinite number of \ao trees compute
the same Boolean function. The {\bf size} of an \ao tree is its number of leaves: remark that,
for all $n\geq 1$, there is an infinite number of \ao trees of size $n$.

The {\bf complexity} of a Boolean function $f$, denoted by $L(f)$, is defined as the size of its {\bf minimal trees}
, i.e. the smallest trees computing $f$. Although a Boolean function is defined on an infinite set of variables,
 it may \emph{actually} depend only on a finite subset of \emph{essential variables}: given a Boolean function $f$,
  we say that the variable $x$ is {\bf essential} for $f$, if and only if $f_{|x\leftarrow0} \not\equiv f_{|x\leftarrow1}$
   (where $f_{|x\leftarrow \alpha}$ is the restriction of $f$ to the subspace of $\{0,1\}^{\mathbb{N}}$ where $x=\alpha$). 
   We denote by $E(f)$ the number of essential variables of $f$. 
Remark that the complexity and the number of essential variables of a Boolean function are only related by the following
inequality: $E(f)\leq L(f)$.

\subsection{Equivalence relationships}

Analytic combinatorics' tools (cf.~\cite{FS09}) are based on the notion of combinatorial classes.
A \emph{combinatorial class} is a denumerable (or finite) set of objects on which a size notion is defined
such that each object has a non-negative size and the set of objects of any given size is finite.
Thus our class of \ao trees is not a combinatorial class since there is an infinite number of trees of a given size.
To use analytic combinatorics, we define an equivalence relationship on Boolean trees.

In the rest of the paper, we define a {\bf tree-structure} to be an \ao tree in which leaves labels have been removed (but internal nodes remain labelled).
\begin{definition}
Let $A$ and $B$ be two \ao trees.
Trees $A$ and $B$ are {\bf equivalent} if 
(1) their tree-structures are identical, 
if (2) two leaves are labelled by the same variable in $A$ if and only of they are labelled by a same variable in $B$, and 
if (3) two leaves are labelled by the same literal in $A$ if and only of they are labelled by a same literal in $B$.
\end{definition}
This equivalence relationship on Boolean trees \emph{induces straightforwardly
an equivalence relationship on Boolean functions}.
For example, both functions $(x_i)_{i\geq 1}  \mapsto  \bar{x}_{2013}$ and $(x_i)_{i\geq 1} \mapsto x_1$ are equivalent.
An important remark is that all functions of an equivalence class have the same complexity and the same number of essential variables.
In the following, we will denote by $\langle f\rangle$ the equivalence class of the Boolean function $f$.

\subsection{Probability distribution}

Let $\bs{(k_n)_{n\geq 1}}$ {\bf be an increasing sequence that tends to infinity} when $n$ tends to infinity. In the following, we only consider trees such that: for all $n\geq 1$, {\bf the set of variables that appear as leaf-labels} (negated or not) {\bf of a tree of size $\bs n$ has cardinality at most $\bs{k_n}$}.
Remark that if $k_n \geq n$ for all $n\geq 1$, this hypothesis is not a restriction. Therefore, we will assume that $k_n\leq n$, for all $n\geq 1$.

\begin{definition}
We denote by $T_{n}$ the number of equivalence classes of trees of size $n$
 in which at most $k_n$ different variables appear as leaf-labels.
We define the ordinary generating function $T(z)$ as $T(z) = \sum_n T_{n} z^n$.
\end{definition}

\begin{proposition}\label{prop:enum}
The number of classes of trees of size $n$ satisfies:
\[T_{n} = C_n \cdot \sum_{p=1}^{k_n} {n \brace p} 2^{2n-1-p},\]
where $C_n$ is the number of non labelled binary trees\footnote{In Proposition~\ref{prop:enum},
$C_n$ is the $(n-1)$th Catalan number (see e.g. \cite[p. 6--7]{FS09}).}
of size $n$ and ${n \brace p}$ is the Stirling number of the
second kind\footnote{In Proposition~\ref{prop:enum}, ${n \brace p}$ is the number 
of partitions of $n$ objects in $p$ non-empty subsets (see e.g. \cite[p. 735--737]{FS09}).}.
\end{proposition}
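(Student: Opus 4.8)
The plan is to count equivalence classes of \ao trees of size $n$ by decomposing an equivalence class into three independent pieces: the underlying tree-structure, the set partition of the $n$ leaves induced by the "same variable" relation, and the choice of which variables are negated (consistently within each block). First I would recall that a tree-structure of size $n$ — a binary plane tree on $n$ leaves with each internal node carrying a label in $\{\land,\lor\}$ — is obtained from an unlabelled binary plane tree on $n$ leaves (there are $C_n$ of these, the $(n-1)$st Catalan number, since there are $n-1$ internal nodes) by attaching one of two connective labels to each of the $n-1$ internal nodes, giving $C_n \cdot 2^{n-1}$ tree-structures. This handles the first factor.

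Next I would address conditions (2) and (3) of the equivalence definition, which together say that two leaves carry the same literal iff they carry the same literal, and same variable iff same variable. Fixing a tree-structure, an equivalence class of \emph{labelled} trees over it is determined by: (a) a partition of the $n$ leaves into blocks, where each block collects the leaves sharing one variable — since at most $k_n$ distinct variables may appear, the number of blocks $p$ ranges from $1$ to $k_n$, and the number of such partitions with exactly $p$ blocks is the Stirling number ${n \brace p}$; and (b) inside each block, a further choice recording which leaves are negated versus not, i.e. a splitting of each block into a "positive" sub-part and a "negative" sub-part. Here the subtle point — and the step I expect to be the main obstacle in getting the exponent exactly right — is that the variable itself is abstracted away (we are counting equivalence classes, not concrete labellings), so what matters per block is only the induced 2-colouring of its leaves, but with one identification: swapping "positive" and "negative" within a block corresponds to the same equivalence class only if one is careful about whether a block is mono-chromatic. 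I would argue that, for a block of size $m\geq 1$, the number of ways it can be split into its positive and negative literal-classes, counted up to the relabelling that is already quotiented out, is $2^{m-1}$; summing the exponents $m_1 + \dots + m_p = n$ over the blocks then contributes a global factor $2^{n-p}$.

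Assembling the three independent factors gives $T_n = C_n \cdot 2^{n-1} \cdot \sum_{p=1}^{k_n} {n \brace p}\, 2^{n-p} = C_n \cdot \sum_{p=1}^{k_n} {n \brace p}\, 2^{2n-1-p}$, which is exactly the claimed formula; the constraint $k_n \le n$ is harmless here since ${n \brace p}=0$ for $p>n$ anyway. To make the argument rigorous I would set it up as a bijection: an equivalence class of trees of size $n$ with at most $k_n$ variables corresponds bijectively to a triple (unlabelled binary tree on $n$ leaves, connective-labelling of its $n-1$ internal nodes, decorated partition of the leaf set into $p\le k_n$ blocks each carrying an internal $2$-colouring modulo colour-swap), and then the cardinalities multiply.

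The hard part will be pinning down the per-block factor $2^{m-1}$ rather than $2^m$ or $2^{m-1}+\tfrac12$-type artifacts: one must check carefully that the "negated/not negated" data inside a block of size $m$ really yields $2^{m-1}$ equivalence classes — intuitively, choosing which subset of the $m$ leaves is positive gives $2^m$ subsets, but the unordered pair $\{$positive set, negative set$\}$ is what the equivalence relation sees, and because the two literals $x$ and $\bar x$ are genuinely distinguishable \emph{as literals} (condition (3) separates leaves by literal, and $x \neq \bar x$) yet the variable name is free, the correct count is $2^{m}/2 = 2^{m-1}$, valid uniformly including the mono-chromatic case once one treats the empty colour class correctly. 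Once this local count is justified, the rest is a routine product-of-generating-structures computation, and Proposition~\ref{prop:enum} follows. I would also double-check the Catalan indexing convention flagged in the footnote ($C_n$ = number of binary trees with $n$ leaves $=$ the $(n-1)$st Catalan number) so that the statement and proof use it consistently.
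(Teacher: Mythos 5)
Your proof is correct and follows essentially the same decomposition as the paper: factor $2^{n-1}C_n$ for the tree-structure, ${n\brace p}$ for the variable partition, and a sign count corrected for the $x\leftrightarrow\bar x$ swap within each block (the paper writes this as $2^n\cdot 2^{-p}$ globally, you as $2^{m-1}$ per block of size $m$, which is the same computation). Your extra care in checking that the complementation involution on sign-subsets has no fixed points is a valid justification of the paper's terser "we double-count, correction $2^{-p}$".
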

\begin{proof}
Once the tree-structure of the binary tree is chosen (factor $2^{n-1}C_n$),
we partition the set of leaves into $p$ parts such that two leaves that belong to the same part are labelled
by the same variable. It gives the contribution ${n \brace p}$.
Then, we choose  to label each leaf by a positive or negative literal: contribution $2^n$.
The equivalence relationship states that a tree and the one obtained from it by replacing the positive literals
corresponding to a fixed variable by its negation (and conversely) are equivalent.
Thus, for each class we double-count the number of trees: correction $2^{-p}$.\hfill\qed
\end{proof}

Given a set $\mathcal{S}$ of equivalence classes of trees and $S_n$ the number of elements of $\mathcal{S}$
of size~$n$, we define the {\bf ratio} of $\mathcal{S}$ by $\mu_{n}(\mathcal{S}) = S_n / T_{n}$.
For a given Boolean function $f$, we denote by $T_n\langle f\rangle$ the number of equivalence classes
of trees of size~$n$ that compute a function of $\langle f\rangle$, and we define the {\bf probability} of $\langle f \rangle$ as
the ratio of $T_n\langle f\rangle$:
\[\IP_{n} \langle f\rangle = \frac{T_n\langle f\rangle }{T_{n}}.\]

One goal of this paper consists in studying the behaviour of the probabilities
 $(\IP_{n}\langle f\rangle)_{f\in\F}$ when the size $n$ of the trees tends to infinity.

\section{Results}\label{sec:results}

We state here our main result: the behaviour of $\IP_{n}\langle f\rangle$
for all fixed function $f\in \F$ in the framework of \ao trees.
Saying that $f$ is fixed means that its complexity (and its number of essential variables) is independent from $n$.
%For all fixed function $f\in\F$, there exists an integer $n_0$ such that that can be expressed by a tree of size $n_0$, so we can study the probability of this function. As an example, the function $x_1\lor x_2\lor\dots\lor x_{k_n}$ does not fit into that context and our result will not hold for this kind of functions.

The main idea of this part is that \emph{a typical tree computing a Boolean function~$f$ is a minimal tree
of $f$ in which has been plugged a large tree that does not distort the function computed by the minimal tree.}

Since this main idea is identical in other framework (e.g. logic of implication~\cite{FGGG12}),
we are convinced that many recent results in quantitative logics could be translated in our new model too.

\begin{definition}
Let $\langle f\rangle$ be a fixed class of Boolean functions. We denote by $L\langle f \rangle$
(resp. $E\langle f \rangle$)  the common complexity (resp. number of essential variables)
of the functions of $\langle f\rangle$. The {\bf multiplicity} of the class $\langle f \rangle$, denoted by $R\langle f\rangle$, is the number $L\langle f \rangle - E\langle f \rangle$: it corresponds to the number of repetitions of variables in a minimal tree of $\langle f\rangle$.
\end{definition}

\begin{theorem}\label{thm:satis}
Let $(k_n)_{n\geq 1}$ be an increasing sequence of integers tending to $+\infty$ when $n$ tends to $+\infty$.
A random Catalan expression is satisfiable with probability tending to~$1$,
when the size of the expression tends to infinity.
\end{theorem}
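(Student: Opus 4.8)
The plan is to turn Theorem~\ref{thm:satis} into an estimate of a single ratio. A Catalan expression is unsatisfiable exactly when the Boolean function it computes is the constant $\false$, and $\langle\false\rangle=\{\false\}$ (equivalent trees differ only by an injective renaming of the variables and a per--variable flip of polarity, neither of which affects being identically $0$). Hence the probability that a uniformly random class of size $n$ is unsatisfiable is $\IP_n\langle\false\rangle=T_n\langle\false\rangle/T_n$, and the theorem is the assertion $T_n\langle\false\rangle=o(T_n)$. The two relevant invariants are $E\langle\false\rangle=0$ and $L\langle\false\rangle=2$ — the minimal trees of $\langle\false\rangle$ are all equivalent to $x\land\bar x$ — so $R\langle\false\rangle=2$: the $\false$--class is costly precisely in repetitions of variables, and that is the feature which becomes statistically negligible once $k_n\to\infty$.

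The first step is to read off, from Proposition~\ref{prop:enum}, how many distinct variables a random class of size $n$ typically carries. Writing $T_n=C_n\,2^{2n-1}\sum_{p=1}^{k_n}{n\brace p}\,2^{-p}$, the marginal weight put on the index $p$ (the number of variable classes) is proportional to ${n\brace p}\,2^{-p}$; since ${n\brace p}$ increases in $p$ up to $p\sim n/\ln n$ much faster than $2^{-p}$ decreases, this weight is concentrated around $P_n:=\min(k_n,\,n/\ln n)$, which tends to infinity by the hypothesis on $(k_n)$. So $T_n\gsim C_n\,2^{2n-1}\,{n\brace p}\,2^{-p}$ for $p$ of order $P_n$, and a uniformly random class of size $n$ uses $\Theta(P_n)$ distinct variables among its leaves.

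The second step is an upper bound on $T_n\langle\false\rangle$ via an expansion argument in the spirit of Kozik's pattern theory. A short self--contained observation already constrains $\false$--trees: if every variable occurs with a single polarity, assigning to it the value making its literals equal to $1$ turns every leaf into $1$, hence the whole tree into $1$; so a $\false$--tree must contain some variable in both polarities. The finer statement I would use is that a $\false$--class of size $n$ is an \emph{expansion} of a minimal tree of $\langle\false\rangle$: it is obtained by placing the size--$2$ pattern $x\land\bar x$ inside a large class, the complementary ``inert'' part of the tree being enumerated through the generating function $T(z)$ (so of cardinality of order $T_{n-O(1)}$). The one repeated variable in the pattern forces two designated leaves onto a common variable, which — by the ${n\brace p}$--combinatorics of the previous step — costs a factor $O(1/P_n)$ compared with leaving those leaves on distinct fresh variables. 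Putting the pieces together yields $T_n\langle\false\rangle=O(T_n/P_n)$, hence $\IP_n\langle\false\rangle=O(1/P_n)\to0$, which is Theorem~\ref{thm:satis}.

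The hard part is to make the expansion step rigorous uniformly over all admissible sequences $(k_n)$, especially slowly growing ones. For $k_n=o(n/\ln n)$ one only has $T_{n-1}/T_n\asymp 1/k_n$, so naive surgery — e.g. ``delete one leaf of a contradiction pattern'' — is $O(n)$--to--one and far too lossy, and one really must show that the inert completions of a fixed pattern form an essentially unconstrained family of large classes, so that their number is genuinely of order $T_{n-O(1)}$. This is precisely what the pattern machinery of Section~\ref{sec:technical} (the extension of Kozik's theory to a variable set of size $k_n$) is built to provide; it should also absorb the two side issues, namely classes already using all $k_n$ variables (where no fresh variable is available for the comparison) and $\false$--trees whose contradiction is not localized in a subtree, such as $(x_1\land\cdots\land x_m)\land(\bar x_1\lor\cdots\lor\bar x_m)$, which the pattern formalism nonetheless recognizes as expansions of $x\land\bar x$.
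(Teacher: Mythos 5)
Your reduction of the theorem to $\IP_n\langle\false\rangle\to 0$ is exactly the paper's route, and your final rate ($O(\rat_n)$, i.e.\ one forced repetition) is the right order of magnitude; but the argument has a genuine gap at its central step, and you essentially concede it. The self-contained observation that a $\false$-tree must contain some variable in both polarities localizes nothing: under the standing assumption $k_n\le n$, every tree of size $n>k_n$ repeats a variable by pigeonhole, and an easy count shows that almost all equivalence classes contain some variable in both polarities regardless of the function computed, so this property cannot separate $\false$-trees from typical trees and yields no bound. What makes the paper's proof work is that unsatisfiability forces a repetition among the leaves of a pattern that is \emph{sub-critical} for $\I$: dually to the tautology argument in Proposition~\ref{prop:tauto}, with $N=\bullet\mid N\lor N\mid \boxempty\land N$ one shows that a tree with no repetition among its $N[N]$-pattern leaves can have all those leaves set to $\false$ and hence computes $\false$ only if it already ``is'' a contradiction there; every contradiction therefore carries a repetition among the $O(1)$-many pattern leaves of the dual pattern, and only then does Lemma~\ref{lem:Jakub} convert ``one repetition among a sub-critical set of designated leaves'' into the factor $B_{n-1,k_n}/B_{n,k_n}=\rat_n$. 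Your proposal never identifies such a pattern, and your closing paragraph explicitly defers the step to ``the pattern machinery of Section~\ref{sec:technical}'' without carrying it out; that localization is the missing idea, not a routine verification.

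Two smaller points. First, the quantitative input $\rat_n=\Theta\bigl(1/\min(k_n,n/\ln n)\bigr)$, which you invoke as ``the ${n\brace p}$-combinatorics'', is itself nontrivial: it is the content of Lemmas~\ref{lem:croissance}--\ref{lem:kgrand} and most of Appendix~\ref{app:technic_core}, and a one-line monotonicity heuristic does not establish the concentration of $\sum_p {n\brace p}2^{-p}$. Second, your claim $R\langle\false\rangle=2$ is inconsistent both with the paper's convention ($L\langle\false\rangle=0$, hence $R\langle\false\rangle=0$, which is what matches $\IP_n\langle\false\rangle\sim\frac34\rat_n$ obtained from Proposition~\ref{prop:tauto} by duality) and with your own final bound $O(T_n/P_n)$, which corresponds to a single factor of $\rat_n$ rather than three. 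None of this affects the truth of the theorem, since any of these rates tends to $0$, but it indicates that the accounting of repetitions in the sketch is not yet under control.
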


\begin{theorem}\label{thm:theta}
Let $(k_n)_{n\geq 1}$ be an increasing sequence of integers tending to $+\infty$ when $n$ tends to $+\infty$.
There exists a sequence $(M_n)_{n\geq 1}$ such that $M_n\sim \frac{n}{\ln n}$ (when $n$ tends to $+\infty$) and such that, for all fixed equivalence class of Boolean functions $\langle f\rangle $, there exists a positive constant $\lambda_{\langle f\rangle}$ satisfying 
\begin{enumerate}[(i)]
\item if, for large enough $n$, $k_n \leq M_n$, then, asymptotically when $n$ tends to $+\infty$,
\[\mathbb{P}_n \langle f \rangle \sim \lambda_{\langle f\rangle}\cdot  \left( \frac{1}{k_{n+1}} \right)^{R\langle f \rangle+1} ;\]
\item if, for large enough $n$, $k_n \geq M_n$, then, asymptotically when $n$ tends to $+\infty$,
\[\mathbb{P}_n \langle f \rangle \sim \lambda_{\langle f\rangle} 	\cdot \left(\frac{\ln n}{n}\right)^{R\langle f \rangle+1}.\]
\end{enumerate}
\end{theorem}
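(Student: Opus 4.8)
The plan is to follow Kozik's two-step philosophy adapted to the joint limit, and to reduce everything to a precise asymptotic analysis of the generating function $T(z)$ and of the "pattern-restricted" generating functions counting trees computing a given class $\langle f\rangle$. First I would extract the asymptotics of $T_n$ from Proposition~\ref{prop:enum}. Writing $T_n = C_n \sum_{p=1}^{k_n}{n\brace p}2^{2n-1-p}$, I would note that $C_n \sim 4^{n-1}/(n^{3/2}\sqrt\pi)$ and that the sum over $p$ of ${n\brace p}2^{-p}$ is, up to the $4^n$ factors, dominated by its largest terms; since $\sum_p {n\brace p}x^p$ is a Bell-type polynomial, the ratio of consecutive terms ${n\brace p+1}/(2{n\brace p})$ crosses $1$ around $p \approx n/\ln n$. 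This is exactly where the threshold sequence $M_n \sim n/\ln n$ comes from: when $k_n \le M_n$ the truncated sum is dominated by the last term $p = k_n$, giving $T_n \asymp C_n\,{n\brace k_n}2^{2n-1-k_n}$, whereas when $k_n \ge M_n$ the sum has effectively saturated and $T_n \asymp C_n \cdot 4^n \cdot (\text{full Bell-type sum})$, whose growth rate is governed by the $p\sim n/\ln n$ régime. I would make these two regimes precise, controlling the truncation error, and in particular pin down the ratio $T_{n}/(\text{next-order term})$ so that the $1/k_{n+1}$ versus $\ln n/n$ dichotomy in the denominator of $\IP_n\langle f\rangle$ appears cleanly.

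Next I would set up the numerator $T_n\langle f\rangle$ via the pattern theory. The guiding picture stated in the paper is that a typical tree computing $f$ is a minimal tree of $f$ into one leaf of which a large "neutral" subtree has been grafted — more precisely, an expression of the form $\text{(minimal tree skeleton)} \land (\text{huge tree})$ or $\lor$, where the huge tree either forces a constant or restores the right literal, together with restricted subtrees hanging at the other leaves that do not change the computed function. Following Kozik, I would classify trees computing $\langle f\rangle$ by the shape of the minimal "core" they contain; the dominant contribution comes from cores of size exactly $L\langle f\rangle$. A core of complexity $L$ with $E$ essential variables uses $R+1 = L-E+1$ "slots" where a fresh choice among the available $k_n$ (or $k_{n+1}$) variables must be made in a way that is then fixed by the equivalence class — this is the combinatorial origin of the exponent $R\langle f\rangle+1$. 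I would write $T_n\langle f\rangle \sim \lambda'_{\langle f\rangle}\cdot (\text{number of ways to attach a large neutral tree}) \cdot (\text{variable-choice factor})$, where the large-neutral-tree count is asymptotically a constant multiple of $T_{n-O(1)}$ divided by the appropriate power of the variable count, because each neutral subtree itself contributes a factor like $T_{m}$ times a $1/k$-type correction per extra distinct variable it may or may not share.

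Then I would divide: $\IP_n\langle f\rangle = T_n\langle f\rangle/T_n$. The $C_n\cdot 4^n$-type factors cancel, the large-neutral-tree factor becomes asymptotically $T_{n-O(1)}/T_n \sim \text{const}$, and what survives is precisely $\lambda_{\langle f\rangle}$ times the "cost" of making $R\langle f\rangle+1$ independent variable choices relative to the typical tree, which is $(1/k_{n+1})^{R\langle f\rangle+1}$ in the subcritical régime $k_n \le M_n$ and $(\ln n/n)^{R\langle f\rangle+1}$ in the supercritical régime $k_n \ge M_n$ — in the latter case because the effective number of variables actually used by a typical large tree saturates at $\Theta(n/\ln n)$ regardless of how large $k_n$ is. Theorem~\ref{thm:satis} would then follow as the special case where $\langle f\rangle$ is the class of the constant function $\false$ (or its complement): one shows $\IP_n\langle \false\rangle \to 0$, so that with probability tending to $1$ a random expression is not the contradiction, and in fact a second-moment / union-bound argument over the finitely-many "small unsatisfiable cores" shows the probability of computing any unsatisfiable function tends to $0$; equivalently one directly estimates that the probability of being satisfiable is $1 - O((\ln n/n))$ or $1 - O(1/k_{n+1})$.

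The main obstacle I expect is the uniform control of the pattern expansion across the whole range of $(k_n)$, and in particular the supercritical régime: when $k_n$ grows faster than $n/\ln n$, the naive "$1/k_n$ per repeated variable" bookkeeping breaks down because a typical tree of size $n$ cannot actually exhibit more than $\sim n/\ln n$ distinct variables, so one must replace $k_n$ by this effective, self-consistently-determined quantity and show the replacement is legitimate at every level of the pattern recursion. Concretely, the hard estimates are (a) proving that the truncated Bell-type sum $\sum_{p\le k_n}{n\brace p}2^{-p}$ and its shifted counterpart $\sum_{p\le k_{n+1}}{n+1\brace p}2^{-p}$ have a ratio whose asymptotics yield exactly the two stated forms — this requires a saddle-point or Euler–Maclaurin analysis of Stirling numbers near $p = n/\ln n$ — and (b) showing the error terms in the pattern decomposition (trees with oversized cores, or with "second-level" repetitions) are genuinely of smaller order, uniformly in $n$, which is where extending Kozik's "large subtree is subcritically rare" lemmas to an $n$-dependent alphabet is delicate. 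Everything else — the cancellation of Catalan/$4^n$ factors, the identification of $\lambda_{\langle f\rangle}$, and the deduction of Theorem~\ref{thm:satis} — I expect to be comparatively routine once the two regimes for $T_n$ and the pattern expansion are in place.
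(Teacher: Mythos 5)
Your proposal follows essentially the same route as the paper: the threshold $M_n\sim n/\ln n$ is extracted from the unimodality of $p\mapsto {n \brace p}2^{-p}$ (the paper bounds the Stirling numbers by $p^n/p!$ via Bonferroni-type inequalities and studies $a_p^{(n)}=p^n2^{-p}/p!$, whose mode is $M_n$), and the numerator is handled by adapting Kozik's pattern theory so that a typical tree computing $\langle f\rangle$ is a minimal tree expanded once, each of its $R\langle f\rangle+1$ repetitions costing one factor of $\rat_n=B_{n-1,k_n}/B_{n,k_n}$, which Lemmas~\ref{lem:kpetit} and~\ref{lem:kgrand} evaluate as $\Theta(1/k_{n+1})$ or $\Theta(\ln n/n)$. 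One correction to your sketch of the regime $k_n\le M_n$: when $k_n$ approaches $M_n$ the consecutive ratios $a_{p+1}^{(n)}/a_p^{(n)}$ tend to $1$, so the truncated sum is dominated not by its single last term ${n\brace k_n}2^{-k_n}$ but by a window of $o(k_n)$ terms below $k_n$; the stated $\Theta(1/k_{n+1})$ nevertheless survives because $a_p^{(n+1)}=p\,a_p^{(n)}$ termwise, so the entire window picks up a uniform factor $\Theta(k_{n+1})$ when passing from $n$ to $n+1$ --- this is exactly what your planned saddle-point analysis must deliver, and it is the content of the paper's Lemma~\ref{lem:u_n}.
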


Let us first remark that the constant $\lambda_{\langle f\rangle}$ is independent from $k_n$ (and from $n$).
Moreover both constant functions $\true$ and $\false$ are alone in their respective equivalence classes, and
their complexity is~$0$.\\

In the \emph{finite} context~\cite{CFGG04,kozik08}, each Boolean function is studied separately instead of being considered
among its equivalence class. 
We can translate the result obtained by Kozik in terms of equivalence classes by summing over all Boolean functions
belonging to a given equivalence class: remark that there are $\binom{k}{E(f)}2^{E(f)}$ functions in the equivalence class
of a given Boolean function $f$, therefore, the result of Kozik is equivalent to:
for all fixed Boolean function $\langle f\rangle$, asymptotically when $k$ tends to infinity,
\[\lim_{n\to+\infty} \IP_{n,k}\langle f\rangle = \Theta_{k \rightarrow \infty} \left(\frac1{k^{L(f) - E(f)+1}}\right)
 = \Theta_{k \rightarrow \infty}\left(\frac1{k^{R(f)+1}}\right).\]
Of course, the interchanging of both limits is not possible, but the finite model is not so far from being an extreme case
of our new model: the \emph{finite} context looks like a degenerate case of our model where there
exists an fixed integer $k$ such that $k_n=k$ for all $n\geq 1$. However, remark that we assume in the present paper that $k_n$ tends to $+\infty$ when $n$ tend to infinity: the case $k_n=k$ is thus not a particular case of our results.\\ 
 
Concerning the infinite context~\cite{GKZ07,GK12} $k_n=+\infty$, we already noticed that the cases such that $k_n$ is larger than
$n$ are equivalent to the model $k_n=n$, even if $k_n=+\infty$. 
Therefore, this infinite context is actually the extreme case $k_n=n$ of our model, and this particular case is thus fully treated in the present paper.

\section{Technical key points}\label{sec:technical}

In this section, we state the technical core of our results, and
we demonstrate how a threshold does appear according to the behaviour of $k_n$ as $n$ tends to infinity.

\subsection{Threshold induced by $k_n$'s behaviour}

\begin{definition}\label{df:B_n}
Let us define the following quantity: $B_{n,k_n} = \sum_{p=1}^{k_n} {n \brace p} 2^{-p}$.
The number $B_{n,k_n}$ quantitatively represents the labelling constraints of leaf-labelling by variables (cf. Proposition~\ref{prop:enum}).
\end{definition}
The following proposition, which can be seen as some particular case of Bonferroni inequalities allows to exhibit bounds on $B_{n,k_n}$.
\begin{proposition}[{\cite[Section~4.7]{Comtet74}}, or~\cite{Sibuya88} for a simpler proof]\label{fact:inegalite}
For all $n\geq 1$, for all $p\in\{1,\ldots,n\}$,
\[\frac{p^n}{p!} - \frac{(p-1)^n}{(p-1)!} \leq \left\{\begin{matrix}n\\p\end{matrix}\right\} \leq \frac{p^n}{p!}.\]
\end{proposition}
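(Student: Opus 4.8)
The plan is to use the combinatorial interpretation of $p!\left\{{n\atop p}\right\}$ as the number of surjective maps from an $n$-element set $[n]$ onto a $p$-element set $[p]$, and then read both inequalities off the first two truncations of the inclusion–exclusion formula
\[p!\left\{\begin{matrix}n\\p\end{matrix}\right\} = \sum_{j=0}^{p} (-1)^j \binom{p}{j} (p-j)^n,\]
which is exactly the Bonferroni phenomenon alluded to in the statement.

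For the upper bound, I would simply note that every surjection $[n]\to[p]$ is in particular a map $[n]\to[p]$, of which there are $p^n$; hence $p!\left\{{n\atop p}\right\} \leq p^n$, i.e. $\left\{{n\atop p}\right\} \leq p^n/p!$. For the lower bound, I would count the \emph{non}-surjective maps: a map $[n]\to[p]$ fails to be surjective exactly when its image avoids at least one of the $p$ target elements, so by the union bound the number of non-surjective maps is at most $\sum_{i=1}^{p} |\{\text{maps avoiding }i\}| = p\,(p-1)^n$. Consequently the number of surjections is at least $p^n - p\,(p-1)^n$, and dividing by $p!$ gives
\[\left\{\begin{matrix}n\\p\end{matrix}\right\} \geq \frac{p^n - p(p-1)^n}{p!} = \frac{p^n}{p!} - \frac{(p-1)^n}{(p-1)!},\]
which is the claimed left-hand inequality. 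The case $p=1$ (where the lower bound reads $1 \geq 1 - 0$) and $p=n$ are covered automatically.

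There is essentially no serious obstacle here: the only point requiring a little care is the direction of the Bonferroni inequalities — truncating the alternating inclusion–exclusion sum after the term $j=0$ overcounts (upper bound) while truncating after $j=1$ undercounts (lower bound) — but phrasing the lower bound directly through the union bound on non-surjective maps, as above, sidesteps even that subtlety and keeps the argument elementary and self-contained. I would therefore present the proof in this short combinatorial form rather than manipulating the alternating sum, and cite \cite{Comtet74,Sibuya88} for the general Bonferroni-type statement.
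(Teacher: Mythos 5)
Your argument is correct. The paper itself offers no proof of this proposition --- it simply cites \cite{Comtet74} and \cite{Sibuya88} and remarks that the bounds are a particular case of the Bonferroni inequalities --- so your contribution is to supply a self-contained elementary justification, and it holds up. The identification of $p!\,{n \brace p}$ with the number of surjections $[n]\to[p]$ is standard; the upper bound is immediate from comparing surjections with all maps, and your lower bound via the union bound on non-surjective maps (at most $p\,(p-1)^n$ of them) gives exactly $p^n - p(p-1)^n \leq p!\,{n \brace p}$, which after dividing by $p!$ and using $p/p! = 1/(p-1)!$ is the stated left-hand inequality. Your remark that phrasing the lower bound through the union bound sidesteps any sign-bookkeeping in the truncated inclusion--exclusion sum is apt, and the boundary case $p=1$ (where $(p-1)^n = 0$ for $n\geq 1$) is indeed handled automatically. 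The only cosmetic point: since the statement is attributed to the literature, you might present this as a short proof sketch for the reader's convenience rather than as a replacement for the citations.
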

In view of these inequalities and of the expression of $B_{n,k_n}$ (cf. Definition~\ref{df:B_n}), it is natural to study the following sequences:
\begin{lemma}\label{lem:croissance}
Let $n$ be a positive integer.
\begin{enumerate}[(i)]
\item The sequence $(a_p^{(n)})_{p\in\{1,\ldots,n\}} = \left(\frac{p^n}{p!}2^{-p}\right)$ is unimodal. More precisely, there exists a integer $M_n$ such that $(a_p)_p$ is strictly increasing
on $\{1,2, \dots,  M_n\}$ and strictly decreasing on $\{M_n +1,\dots, n\}$.
\item Moreover, the sequence $(M_n)_n$ is increasing and asymptotically satisfies:
\[M_n \sim \frac{n}{\ln n}.\]
\end{enumerate}
\end{lemma}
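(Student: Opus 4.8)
The plan is to analyze the ratio of consecutive terms. Set $a_p^{(n)} = \frac{p^n}{p!}2^{-p}$ and consider
\[
\frac{a_{p+1}^{(n)}}{a_p^{(n)}} = \frac12\cdot\frac{1}{p+1}\left(\frac{p+1}{p}\right)^n = \frac12\left(1+\frac1p\right)^{n-1}.
\]
For part (i), I would observe that $p\mapsto \left(1+\frac1p\right)^{n-1}$ is strictly decreasing in $p$ on $\{1,\ldots,n\}$, hence so is the ratio $a_{p+1}^{(n)}/a_p^{(n)}$. Therefore there is a well-defined threshold: $a_{p+1}^{(n)}/a_p^{(n)}>1$ for small $p$ and $<1$ for large $p$, and defining $M_n$ as the largest $p$ for which the ratio exceeds $1$ gives exactly the unimodality statement, with strict increase on $\{1,\ldots,M_n\}$ and strict decrease on $\{M_n+1,\ldots,n\}$. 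One should check the boundary cases: at $p=1$ the ratio is $2^{n-2}\ge 1$ for $n\ge 2$ so $M_n\ge 1$, and at $p=n$ one verifies the ratio is $<1$, so $M_n\le n-1$ (for $n$ large enough; small $n$ handled separately).

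For part (ii), the monotonicity of $(M_n)_n$ should follow from the fact that $\left(1+\frac1p\right)^{n-1}$ is increasing in $n$ for fixed $p$: increasing $n$ can only push the threshold up, so $M_{n+1}\ge M_n$. The asymptotics are the substantive part. The condition defining $M_n$ is, up to the integer rounding, $\frac12\left(1+\frac1{M_n}\right)^{n-1}\approx 1$, i.e. $(n-1)\ln\!\left(1+\frac1{M_n}\right)\approx \ln 2$. Writing $M_n\to\infty$ (which must be argued, e.g. from $M_n\ge 1$ and the equation forcing growth), I would expand $\ln(1+1/M_n) = \frac1{M_n} - \frac1{2M_n^2}+O(M_n^{-3})$, giving $\frac{n-1}{M_n}\bigl(1+o(1)\bigr)=\ln 2\cdot\bigl(1+o(1)\bigr)$ at leading order, hence $M_n \sim \frac{n}{\ln 2}$.

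The main obstacle I anticipate is precisely this: a naive expansion gives $M_n\sim n/\ln 2$, not $n/\ln n$. So the lemma as stated cannot be proved from $a_p^{(n)}=\frac{p^n}{p!}2^{-p}$ alone; the correct reading must be that $M_n$ is the mode of $B_{n,k}$'s summands including the Stirling number itself, or equivalently of $\frac{p^n}{p!}2^{-p}$ with the $p!$ replaced by its Stirling approximation $p!\sim \sqrt{2\pi p}\,(p/e)^p$. Redoing the ratio with $(p/e)^p$ in place of $p!$ yields $\frac{a_{p+1}}{a_p}=\frac{1}{2e}\left(1+\frac1p\right)^{n}(p+1)^{?}$-type behavior whose logarithm is $\approx \frac np - \ln p - \ln 2 - 1$; setting this to $0$ gives $\frac np \approx \ln p$, i.e. $p\ln p\approx n$, whose solution is $p\sim \frac{n}{\ln n}$ (since then $\ln p \sim \ln n$). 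Thus the real work is: (a) justify that the mode of $a_p^{(n)}$ — with the \emph{exact} factorial — is asymptotically the same as the mode of the smooth approximant (controlling the error in Stirling's formula uniformly for $p$ near the mode), and (b) solve the transcendental equation $p\ln p \sim n$ rigorously, showing $M_n\ln M_n \sim n$ and deducing $M_n\sim n/\ln n$ by a standard bootstrapping argument ($\ln M_n \sim \ln n$). The unimodality and monotonicity claims, by contrast, are robust and follow cleanly from the monotone-ratio argument above once the ratio $\frac{a_{p+1}^{(n)}}{a_p^{(n)}}$ is shown to be decreasing in $p$ and increasing in $n$.
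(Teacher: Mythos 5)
There is a genuine gap, and it is a single algebra slip that derails the whole asymptotic analysis. You write
\[
\frac{a_{p+1}^{(n)}}{a_p^{(n)}} = \frac12\cdot\frac{1}{p+1}\left(\frac{p+1}{p}\right)^n = \frac12\left(1+\frac1p\right)^{n-1},
\]
but $\frac{1}{p+1}\left(\frac{p+1}{p}\right)^n=\frac{(p+1)^{n-1}}{p^{n}}=\frac1p\left(1+\frac1p\right)^{n-1}$, so your simplification drops a factor of $\nicefrac1p$. The correct ratio is
\[
\frac{a_{p+1}^{(n)}}{a_p^{(n)}}=\left(\frac{p+1}{p}\right)^n\frac{1}{2(p+1)},
\]
and the threshold condition is $n\ln\left(1+\frac1p\right)=\ln 2+\ln(p+1)$, i.e.\ $\frac{n}{p}\approx \ln p$, whence $p\ln p\sim n$ and $p\sim n/\ln n$ by the bootstrapping you describe. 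The missing $\ln(p+1)$ term — which comes from the exact quotient $(p+1)!/p!=p+1$, not from any Stirling approximation — is precisely what turns the (wrong) answer $n/\ln 2$ into the (correct) answer $n/\ln n$. Consequently your ``main obstacle'' is a phantom: the lemma as stated is provable from the exact sequence $\frac{p^n}{p!}2^{-p}$, your proposed reinterpretation via Stirling's formula is unnecessary, and the claim that ``the lemma as stated cannot be proved'' is false. This matters because you would either abandon the statement as unprovable or embark on a uniform-Stirling error analysis that the paper never needs.

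Apart from this, your plan coincides with the paper's proof: the ratio $a_{p+1}^{(n)}/a_p^{(n)}$ is strictly decreasing in $p$ (log-concavity), giving unimodality with a unique mode $M_n$; it is increasing in $n$ for fixed $p$, which pushes the threshold up and gives monotonicity of $(M_n)_n$ (the paper phrases this via the decreasing function $\phi_n(p)=n\ln\frac{p+1}{p}-\ln(2(p+1))$ and the comparison $\phi_n(x_{n+1})=-\ln(1+\nicefrac1{x_{n+1}})<0$); and the asymptotics follow from $x_n\ln x_n\sim n\Rightarrow \ln x_n\sim\ln n\Rightarrow x_n\sim n/\ln n$, exactly as you outline. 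Once the ratio is computed correctly, your argument is the paper's argument.
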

%Note that the sequence $(a_p)$ is related to the terms $T_{n}$, cf. Proposition~\ref{prop:enum}.
%Although the sequence $(a_p)$ is not directly equal to the Stirling numbers of the second kind
%it is obviously linked to it (cf. next Proposition~\ref{fact:inegalite}).
The proof of this lemma is postponed to Appendix~\ref{app:technic_core}.

We are now ready, to understand the asymptotic behaviour of $B_{n,k_n}$:
\emph{roughly speaking, before the threshold ($k_n\leq M_n$), $B_{n,k_n}$ is equivalent to the sum of a few of its last terms,
and after $M_n$, it is equivalent to the sum of a few terms around $M_n$.}
\begin{lemma}\label{lem:u_n}
Let $(u_n)_{n\geq 1}$ be an increasing sequence such that $u_n\leq n$ for all integer $n\geq 1$ and $u_n$
tends to $+\infty$ when $n$ tends to $+\infty$.
\begin{enumerate}[(i)]
\item If, for all large enough $n$, $u_n\leq M_n$, then, for all sequences $(\delta_n)_{n\geq 1}$
such that $\delta_n = o(u_n)$ and $\frac{u_n\sqrt{\ln u_n}}{n} = o(\delta_n)$, we have, asymptotically when $n$ tends to $+\infty$,
\begin{equation}\label{u_n_petit}
B_{n,u_n} = \Theta\left(\sum_{p=u_n-\delta_n}^{u_n}\frac{p^n}{p!}2^{-p}\right).
\end{equation}
\item If, for large enough $n$, $u_n\geq M_n$, then, for all sequences $(\delta_n)_{n\geq 1}$ such that $\delta_n = o(u_n)$
 and $\frac{u_n\sqrt{\ln u_n}}{n} = o(\delta_n)$, for all sequences $(\eta_n)_{n\geq 1}$
 such that $\eta_n = o(M_n)$, $\lim_{n\to+\infty}\frac{\eta_n^2}{M_n} = +\infty$ and $\sqrt{M_n\ln (u_n-M_n)}=o(\eta_n)$,
 we have, asymptotically when $n$ tends to $+\infty$,
\begin{equation}\label{u_n_grand}
B_{n,u_n} = \Theta\left(\sum_{p=M_n-\delta_n}^{\min\{M_n + \eta_n, u_n\}} \frac{p^n}{p!}2^{-p}\right).
\end{equation}
\end{enumerate}
\end{lemma}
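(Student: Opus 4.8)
The proof rests entirely on the bracketing provided by Proposition~\ref{fact:inegalite} together with the unimodality of $(a_p^{(n)})_p$ from Lemma~\ref{lem:croissance}. Write $a_p = a_p^{(n)} = \frac{p^n}{p!}2^{-p}$. Proposition~\ref{fact:inegalite} gives ${n\brace p}2^{-p} = a_p - 2(a_{p-1}-a_{p-1}/2)$-type telescoping bounds; more precisely, summing $\frac{p^n}{p!}-\frac{(p-1)^n}{(p-1)!}\le {n\brace p}\le \frac{p^n}{p!}$ against the weights $2^{-p}$, one gets that $B_{n,k} = \sum_{p=1}^k {n\brace p}2^{-p}$ is sandwiched between an expression of the form $\sum_{p=1}^k a_p$ minus a shifted copy (i.e. $\sim\sum (a_p - \tfrac12 a_{p-1})$, which is comparable to $\sum a_p$ up to a constant since consecutive $a_p$ are within a bounded ratio in the relevant range) and $\sum_{p=1}^k a_p$. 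So the whole problem reduces to estimating $A_{n,k}:=\sum_{p=1}^k a_p$ up to multiplicative constants, and then transferring the estimate back to $B_{n,k}$ (the transfer costs only a constant factor provided the ratio $a_p/a_{p-1}$ stays bounded away from $0$ and $\infty$ over the window that carries the mass — which I would check using $a_p/a_{p-1} = \frac1p\bigl(\tfrac{p}{p-1}\bigr)^n\cdot\tfrac12$).

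For part (i), where $u_n\le M_n$, the sequence $(a_p)_{p\le u_n}$ is strictly increasing (it lies entirely on the rising part, since $M_n$ is the mode). First I would show the sum is dominated by its tail: because the sequence is increasing, $A_{n,u_n}\ge \sum_{p=u_n-\delta_n}^{u_n} a_p$ trivially, and for the reverse I would bound $\sum_{p=1}^{u_n-\delta_n-1} a_p \le u_n\, a_{u_n-\delta_n}$ and compare this with a single term, say $a_{u_n}$, or with the partial tail sum $\sum_{p=u_n-\delta_n}^{u_n} a_p$. The key quantitative input is a lower bound on the growth rate $a_{p+1}/a_p$ near $p=u_n$: from $a_{p+1}/a_p=\frac{1}{2(p+1)}(1+\tfrac1p)^n \approx \frac{1}{2p}e^{n/p}$, and since $p\le u_n\le M_n\sim n/\ln n$ forces $n/p\gtrsim \ln n\gtrsim \ln u_n$, each ratio is at least something like $\tfrac{1}{2u_n}u_n^{c}$ for a constant $c>1$ — large. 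The hypothesis $\delta_n = o(u_n)$ keeps the window $[u_n-\delta_n,u_n]$ on the increasing part, and $\frac{u_n\sqrt{\ln u_n}}{n}=o(\delta_n)$ is exactly what is needed so that accumulated growth over $\delta_n$ steps, which is of order $\exp\bigl(\delta_n\cdot\tfrac{n}{u_n^2}\bigr)$ roughly, beats the polynomial factor $u_n$ coming from bounding the head by $u_n$ times its largest term. I would make this precise by estimating $\log(a_{u_n}/a_{u_n-\delta_n})$ via a Riemann-sum/Taylor expansion of $\sum_{p} \log(a_{p+1}/a_p)$ and comparing with $\log u_n$.

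For part (ii), where $u_n\ge M_n$, I would split $A_{n,u_n}=\sum_{p=1}^{M_n}a_p+\sum_{p=M_n+1}^{u_n}a_p$ and treat the two halves separately. On the rising half, the argument of part (i) applied at the mode shows $\sum_{p=1}^{M_n}a_p=\Theta\bigl(\sum_{p=M_n-\delta_n}^{M_n}a_p\bigr)$, using the same condition on $\delta_n$ (note $M_n\sim n/\ln n$, so $u_n\sqrt{\ln u_n}/n$ and $M_n\sqrt{\ln M_n}/n$ are of the same order). On the falling half, $(a_p)$ is decreasing, and I would show it decays fast enough that $\sum_{p=M_n+1}^{u_n}a_p=\Theta\bigl(\sum_{p=M_n+1}^{\min\{M_n+\eta_n,u_n\}}a_p\bigr)$: here I need that the decay ratio $a_{p+1}/a_p$ is bounded away from $1$ a little past the mode, so that the geometric-type tail beyond $M_n+\eta_n$ is negligible compared with the $\eta_n$ terms just after the mode. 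Expanding $\log(a_p/a_{M_n})$ to second order around the mode gives a Gaussian-like profile $\exp\bigl(-c(p-M_n)^2/M_n\bigr)$ (the curvature scaling $1/M_n$ comes from differentiating $n/p - \log p$ twice); the condition $\lim \eta_n^2/M_n=+\infty$ makes the window $[M_n,M_n+\eta_n]$ capture essentially all the Gaussian mass, while $\eta_n=o(M_n)$ keeps the quadratic approximation valid and $\sqrt{M_n\ln(u_n-M_n)}=o(\eta_n)$ ensures the truncation error from the range $(M_n+\eta_n,u_n]$ — at most $(u_n-M_n)\exp(-c\eta_n^2/M_n)$ — is $o$ of one window term. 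Finally I would recombine: the two $\Theta$-windows on either side of the mode together are $\Theta$ of $\sum_{p=M_n-\delta_n}^{\min\{M_n+\eta_n,u_n\}}a_p$, and transfer back from $A$ to $B$.

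The main obstacle, and where most of the work lies, is the second-order control near the mode in part (ii): getting clean, uniform-in-$n$ bounds on $\log(a_p/a_{M_n})$ as a function of $p-M_n$, with the right curvature scaling, from the somewhat awkward expression $\log a_p = n\log p - \log p! - p\log 2$ and Stirling's formula. Once that quadratic profile is established with explicit constants, matching the windows to the three hypotheses on $\delta_n$ and $\eta_n$ is bookkeeping, but it must be done carefully because the hypotheses are tight. I would carry out the detailed estimates in Appendix~\ref{app:technic_core}.
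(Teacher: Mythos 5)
Your plan is essentially the paper's own proof: the same sandwich $\tfrac12\sum_{p\le u_n}a_p^{(n)}\le B_{n,u_n}\le\sum_{p\le u_n}a_p^{(n)}$ obtained by telescoping Proposition~\ref{fact:inegalite} (note the telescoping already yields the factor $\tfrac12$ outright, so no bounded-ratio check is needed for the transfer), followed by bounding the head (and, in (ii), the tail) by the number of terms times the extreme term and controlling $a_{u_n-\delta_n}/a_{u_n}$ and $a_{M_n+\eta_n}/a_{M_n}$ via Stirling, with the defining equation of $M_n$ cancelling the linear terms and leaving the quadratic decay that the hypotheses on $\delta_n$ and $\eta_n$ are calibrated to beat. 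The only slips are in your heuristics (the accumulated growth in (i) is $\exp(\Theta(n\delta_n^2/u_n^2))$, not $\exp(\delta_n n/u_n^2)$), which you already flag as to be made precise exactly as the paper does.
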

This lemma is proved in Appendix~\ref{app:technic_core}
and allows us to deduce the following results on the behaviour of $B_{n,k_n}$, when $n$ tends to $+\infty$:
\begin{lemma}\label{lem:kpetit}
Let $(k_n)_{n\geq 1}$ be a sequence of integers that tends to $+\infty$ when $n$ tends to $+\infty$.
Let us assume that $k_n \leq M_n$ for large enough $n$, then, asymptotically when $n$ tends to infinity,
\[\frac{B_{n,k_{n+1}}}{B_{n+1,k_{n+1}}} = \Theta\left(\frac1{k_{n+1}}\right).\]
\end{lemma}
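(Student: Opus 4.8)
The plan is to reduce the whole ratio to a short window of summation indices just below $k_{n+1}$, on which the general terms of $B_{n,k_{n+1}}$ and $B_{n+1,k_{n+1}}$ differ by an explicit, nearly constant, factor. Write $k=k_{n+1}$ and, for a positive integer $m$, set $a_p^{(m)}=\frac{p^m}{p!}2^{-p}$; the only arithmetic input is the identity $a_p^{(n+1)}=p\,a_p^{(n)}$, i.e. $a_p^{(n)}=\frac1p\,a_p^{(n+1)}$. On any window $\{k-\delta,\dots,k\}$ with $\delta=o(k)$ one has $\frac1p=\Theta(\frac1k)$ uniformly; hence, once we know that each of $B_{n,k}$ and $B_{n+1,k}$ is of the order of such a windowed sum of the $a_p^{(n)}$, resp. the $a_p^{(n+1)}$, over the \emph{same} window, the claimed estimate $B_{n,k}/B_{n+1,k}=\Theta(1/k)$ drops out.

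To obtain the two localizations I would first fix one sequence $(\delta_n)_{n\ge1}$ with $\delta_n=o(k_{n+1})$ and $\frac{k_{n+1}\sqrt{\ln k_{n+1}}}{n}=o(\delta_n)$; such a sequence exists because $k_{n+1}\le M_{n+1}=O(n/\ln n)$ forces $\frac{k_{n+1}\sqrt{\ln k_{n+1}}}{n}\to 0$. One checks that $(\delta_n)_n$ is admissible, in the sense of Lemma~\ref{lem:u_n}(i), for the sequence $n\mapsto k_{n+1}$, and that the shifted sequence $(\delta_{n-1})_n$ is admissible for the sequence $m\mapsto k_m$ — so that the window $\{k_{n+1}-\delta_n,\dots,k_{n+1}\}$ can be used on both sides. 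Since $k_m\le M_m$ for large $m$ and $(M_m)_m$ is increasing, we have $k_{n+1}\le M_{n+1}$ and, for $n$ large, $k_{n+1}\le n$; applying Lemma~\ref{lem:u_n}(i) twice then yields
\[
B_{n+1,k_{n+1}}=\Theta\!\left(\sum_{p=k_{n+1}-\delta_n}^{k_{n+1}}a_p^{(n+1)}\right),\qquad
B_{n,k_{n+1}}=\Theta\!\left(\sum_{p=k_{n+1}-\delta_n}^{k_{n+1}}a_p^{(n)}\right).
\]
Substituting $a_p^{(n)}=\frac1p a_p^{(n+1)}$ in the second sum and using $\frac1p=\Theta(\frac1{k_{n+1}})$ on the window turns the second estimate into $B_{n,k_{n+1}}=\Theta\!\left(\frac1{k_{n+1}}\sum_{p=k_{n+1}-\delta_n}^{k_{n+1}}a_p^{(n+1)}\right)=\Theta\!\left(\frac{B_{n+1,k_{n+1}}}{k_{n+1}}\right)$, which is exactly the asserted asymptotics.

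The step that requires more than routine care — and the main obstacle — is the application of Lemma~\ref{lem:u_n}(i) to $B_{n,k_{n+1}}$: that lemma asks the threshold to be at most $M_n$ \emph{at size $n$}, whereas here the size $n$ is paired with the later threshold $k_{n+1}$, and it may happen, for infinitely many $n$, that $M_n<k_{n+1}\le M_{n+1}$. In that case I would argue separately: by the unimodality of $(a_p^{(n)})_p$ (Lemma~\ref{lem:croissance}(i)) together with Proposition~\ref{fact:inegalite}, the difference $B_{n,k_{n+1}}-B_{n,M_n}$ is nonnegative and bounded above by $\sum_{p>M_n}a_p^{(n)}$, which is $O(B_{n,M_n})$ (the mass of the unimodal sequence beyond its mode being comparable to the mass up to the mode, by the concentration estimates behind Lemma~\ref{lem:u_n}); hence $B_{n,k_{n+1}}=\Theta(B_{n,M_n})$, and one concludes as above after checking that the windowed sums near $M_n$ and near $k_{n+1}$ have the same order — they share the same endpoint region and differ only by finitely many terms that are $O(a_{M_n}^{(n)})=O(B_{n,M_n})$. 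Apart from this bookkeeping, the proof is entirely the identity $a_p^{(n)}=\frac1p a_p^{(n+1)}$ and the uniform control $\frac1p=\Theta(\frac1{k_{n+1}})$ over a window of relative width $o(1)$.
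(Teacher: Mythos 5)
Your main line is the paper's own: localize $B_{n,k_{n+1}}$ and $B_{n+1,k_{n+1}}$ to the common window $\{k_{n+1}-\delta_n,\dots,k_{n+1}\}$ via Lemma~\ref{lem:u_n}~(i), then use the identity $a_p^{(n+1)}=p\,a_p^{(n)}$ together with $p=\Theta(k_{n+1})$ uniformly on the window; you also correctly isolate the only delicate point, namely that $k_{n+1}$ may exceed $M_n$ while staying below $M_{n+1}$, which the paper likewise treats as a separate case.

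The justification you give for that borderline case, however, rests on a false claim: $\sum_{p>M_n}a_p^{(n)}$ is \emph{not} $O(B_{n,M_n})$. The peak of $(a_p^{(n)})_p$ at $M_n$ is asymmetric. The computations inside the proof of Lemma~\ref{lem:u_n} give a left decay $a^{(n)}_{M_n-\delta}/a^{(n)}_{M_n}\approx\exp(-n\delta^2/(2M_n^2))=\exp(-\Theta(\delta^2\ln M_n/M_n))$, hence a left half-width of order $\sqrt{M_n/\ln M_n}$, but a right decay $a^{(n)}_{M_n+\eta}/a^{(n)}_{M_n}\approx\exp(-\eta^2/M_n)$, hence a right half-width of order $\sqrt{M_n}$. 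Consequently $\sum_{p>M_n}a_p^{(n)}$ exceeds $B_{n,M_n}=\Theta\bigl(\sum_{p\le M_n}a_p^{(n)}\bigr)$ by a factor of order $\sqrt{\ln M_n}\to\infty$, so ``mass beyond the mode comparable to mass up to the mode'' fails, and as written your bound only yields $B_{n,k_{n+1}}=O\bigl(\sqrt{\ln M_n}\,B_{n,M_n}\bigr)$, which is not a $\Theta$. The conclusion you want, $B_{n,k_{n+1}}=\Theta(B_{n,M_n})$, is nevertheless true and easily repaired: you only need the partial tail $\sum_{p=M_n+1}^{k_{n+1}}a_p^{(n)}$, and since $x_{n+1}-x_n\to0$ (so that $M_{n+1}-M_n\le1$ eventually, hence $k_{n+1}-M_n\le1$), this sum has at most one term, bounded by $a^{(n)}_{M_n}=O(B_{n,M_n})$. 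Alternatively, do as the paper does: apply Lemma~\ref{lem:u_n}~(ii) to $B_{n,k_{n+1}}$ directly (legitimate since $k_{n+1}\ge M_n$) and check that the window around $M_n$ it produces carries the same mass as $\sum_{p=k_{n+1}-\delta_n}^{k_{n+1}}a_p^{(n)}$. With that one step repaired, the rest of your argument goes through.
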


\begin{lemma}\label{lem:kgrand}
Let $(k_n)_{n\geq 1}$ be a sequence of integers that tends to $+\infty$ when $n$ tends to $+\infty$.
Let us assume that $k_{n} \geq M_n$ for large enough $n$, then, asymptotically when $n$ tends to infinity,
\[\frac{B_{n,k_{n+1}}}{B_{n+1,k_{n+1}}} = \Theta\left(\frac{\ln n}{n}\right).\]
\end{lemma}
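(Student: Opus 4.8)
The plan is to apply Lemma~\ref{lem:u_n}(ii) to the sequence $u_n = k_{n+1}$, simultaneously for the index $n$ and for the index $n+1$, and then to compare the resulting two truncated sums term by term. First I would check the hypotheses: since $(k_n)$ is increasing and tends to $+\infty$ with $k_{n+1}\le n+1$, the sequence $u_n=k_{n+1}$ is (essentially) admissible in the sense of Lemma~\ref{lem:u_n}, and the assumption $k_n\ge M_n$ for large $n$, together with the monotonicity of $(M_n)$ from Lemma~\ref{lem:croissance}(ii), gives $u_n = k_{n+1}\ge k_n\ge M_n$ and also $u_{n+1}=k_{n+2}\ge M_{n+1}$ for large $n$, so we are in case (ii) at both levels. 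I would then fix auxiliary sequences $(\delta_n)$ and $(\eta_n)$ meeting the growth conditions of Lemma~\ref{lem:u_n}(ii); because $M_n\sim n/\ln n$, concrete admissible choices such as $\delta_n = n/(\ln n)^{2}$ and $\eta_n = \sqrt{M_n}\,\ln n$ work for both $n$ and $n+1$ (the constraints are robust under the shift $n\mapsto n+1$ since all the relevant quantities are regularly varying).

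With these choices Lemma~\ref{lem:u_n}(ii) yields
\[
B_{n,k_{n+1}} = \Theta\!\left(\sum_{p=M_n-\delta_n}^{\min\{M_n+\eta_n,\,k_{n+1}\}} \frac{p^n}{p!}2^{-p}\right),
\qquad
B_{n+1,k_{n+1}} = \Theta\!\left(\sum_{p=M_{n+1}-\delta_{n+1}}^{\min\{M_{n+1}+\eta_{n+1},\,k_{n+1}\}} \frac{p^{n+1}}{p!}2^{-p}\right).
\]
Now I would compare the summand $a_p^{(n)} = \frac{p^n}{p!}2^{-p}$ with $a_p^{(n+1)} = p\cdot a_p^{(n)}$: the ratio is exactly $p$, so for every $p$ in the (overlapping) ranges of summation, which are all of the form $M_m + O(\eta_m) = \frac{n}{\ln n}(1+o(1))$, we have $a_p^{(n+1)} = \Theta\!\left(\frac{n}{\ln n}\right) a_p^{(n)}$ uniformly. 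The two index ranges are not literally identical, but both are intervals of length $O(\eta_n) = o(M_n)$ centred near $M_n$ (using $M_{n+1}-M_n = O(1)$, which follows from $M_n\sim n/\ln n$ and the integrality/monotonicity in Lemma~\ref{lem:croissance}), and along such an interval the terms $a_p^{(n)}$ are, up to the unimodality peak, comparable in the crude $\Theta$ sense needed here. Dividing the two displayed $\Theta$-expressions then gives $B_{n+1,k_{n+1}} = \Theta\!\left(\frac{n}{\ln n}\right) B_{n,k_{n+1}}$, i.e. $\frac{B_{n,k_{n+1}}}{B_{n+1,k_{n+1}}} = \Theta\!\left(\frac{\ln n}{n}\right)$, as claimed.

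The main obstacle I anticipate is the bookkeeping that makes "the two truncation windows are comparable" precise: one must argue that replacing the window $[M_{n+1}-\delta_{n+1}, M_{n+1}+\eta_{n+1}]$ by $[M_n-\delta_n, M_n+\eta_n]$ changes the sum only by a bounded factor, which requires knowing both that $M_{n+1}-M_n$ stays bounded and that the profile $p\mapsto a_p^{(n)}$ does not vary too wildly across an interval of width $O(\eta_n)$ around its mode. The first point is a quantitative refinement of Lemma~\ref{lem:croissance}(ii); the second follows from a Stirling/Laplace-type estimate of $a_p^{(n)}$ near $p=M_n$ — essentially that $\ln a_p^{(n)}$ is smooth with second derivative of order $1/M_n$, so the variation over a window of width $\eta_n = o(M_n)$ with $\eta_n^2/M_n\to\infty$ is what controls the Gaussian-type spread already exploited in the proof of Lemma~\ref{lem:u_n}. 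Once that estimate is in hand, everything else is the elementary ratio computation $a_p^{(n+1)}/a_p^{(n)} = p$ together with $p = \Theta(n/\ln n)$ throughout the relevant range.
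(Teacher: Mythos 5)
Your proposal follows the same route as the paper's proof: apply Lemma~\ref{lem:u_n}\,\emph{(ii)} at levels $n$ and $n+1$, use $a_p^{(n+1)}=p\,a_p^{(n)}$ with $p=\Theta(M_n)=\Theta(n/\ln n)$ across the truncation window, and then argue that the two (slightly shifted) windows give comparable sums. The overall structure is sound, and your closing paragraph correctly identifies the window-shift comparison as the real work; the paper carries it out explicitly by writing $S_n=T_n+\sum_{p=\min\{M_n+\eta_n,k_{n+1}\}}^{\min\{M_{n+1}+\eta_{n+1},k_{n+1}\}}a_p^{(n)}-\sum_{p=M_n-\delta_n}^{M_{n+1}-\delta_{n+1}}a_p^{(n)}$ and showing the two correction sums are $o(a_{M_n}^{(n)})=o(T_n)$.

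Two points need repair, however. First, the intermediate claim that ``along such an interval the terms $a_p^{(n)}$ are \ldots comparable in the crude $\Theta$ sense'' is false: by construction $\eta_n^2/M_n\to\infty$, so $a_{M_n+\eta_n}^{(n)}/a_{M_n}^{(n)}=\exp(-\eta_n^2/M_n(1+o(1)))\to 0$, i.e.\ the edge terms are exponentially smaller than the peak --- that is precisely why the truncation in Lemma~\ref{lem:u_n} is legitimate. You cannot divide the two $\Theta$-expressions on the strength of term-by-term comparability; what is true, and what you must prove (as you do sketch afterwards), is that the \emph{sums} over the two windows agree up to $1+o(1)$ because their symmetric difference lies in the exponentially suppressed tails. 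Second, $M_{n+1}-M_n=O(1)$ does \emph{not} follow from $M_n\sim n/\ln n$ together with monotonicity and integrality: a monotone integer sequence asymptotic to $n/\ln n$ can still have unbounded jumps. You need to return to the defining relation~\eqref{eq:M_n}, $n\ln(1+1/x_n)=\ln 2+\ln(x_n+1)$, which yields $x_{n+1}-x_n=O(1/\ln n)$ and hence $M_{n+1}-M_n\in\{0,1\}$ for large $n$ (or at least $M_{n+1}-M_n=o(\delta_n)$, which is what the window comparison actually requires). With these two repairs your argument matches the paper's.
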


\begin{definition}
Let the fraction $\rat_n$ be the quantitative evolution of the leaf-labelling constraints
from trees of size $n-1$ to size~$n$:
$\rat_n = B_{n-1,k_{n}}/B_{n,k_{n}}$.\\
Its asymptotic behaviour is quantified by Lemmas~\ref{lem:kpetit} and~\ref{lem:kgrand}.
\end{definition}

\subsection{Adjustment of Kozik's pattern language theory}

In 2008, Kozik~\cite{kozik08} introduced a quite effective way to study Boolean trees: 
he defined a notion of pattern that permits to easily classify and count large trees according to some constraints
on their structures.
Kozik applied this pattern theory to study \ao trees with a finite number of variables.
This theory has then been extended to different models of Boolean trees (see for example paper~\cite{GGKM13}).

We adapt the definitions of patterns to our new model and then we extend results
of Kozik's paper.
\begin{definition}
\begin{enumerate}[(i)]
\item A {\bf pattern} is a binary tree with internal nodes labelled by
$\land$ or $\lor$ and with external nodes labelled by~$\bullet$ or~$\boxempty$.
Leaves labelled by~$\bullet$ are called {\bf pattern leaves} and leaves labelled by $\boxempty$
are called {\bf placeholders}. A {\bf pattern language} is a set of patterns
\item Given a pattern language $L$ and a family of trees $\M$, we denote by $L[\M]$ the family of all trees
obtained by replacing every placeholder in an element from $L$ by a tree from $\M$. 
\item We say that $L$ is {\bf unambiguous} if, and only if, for any family $\mathcal{M}$ of trees, any tree of $L[\mathcal M]$
can be built from a unique pattern from $L$ in which has been plugged trees from $\mathcal{M}$.
\end{enumerate}
\end{definition}
The generating function of a pattern language $L$ is $\ell(x,y) = \sum_{d, p} L(d,p)x^dy^p$,
where $L(d,p)$ is the number of elements of $L$ with $d$ pattern leaves and $p$ placeholders.

\begin{definition}
We define the composition of two pattern languages $L[P]$ as the pattern language
of trees which are obtained by replacing every placeholder of a tree from $L$ by a tree from $P$. 
\end{definition}

\begin{definition}
A pattern language $L$ is {\bf sub-critical} for a family $\M$ if the generating function $m(z)$ of $\M$
has a square-root singularity $\tau$, and if $\ell(x,y)$ is analytic in some set
$\{(x,y) : |x|\leq \tau+\varepsilon, |y|\leq m(\tau)+\varepsilon\}$ for some positive $\varepsilon$. 
\end{definition}

\begin{definition}%[cf. left-hand side of Fig.~\ref{fig:true}]
Let $L$ be a pattern language, $\mathcal M$ be a family of trees and $\Gamma$ a subset of $\{x_i\}_{i\geq 1}$,
whose cardinality does not depend on $n$. Given an element of $L[\M]$, 
\begin{enumerate}[(i)]
\item the number of its $L$-{\bf repetitions} is the number of its $L$-pattern leaves minus the number of different variables that appear in the labelling of its $L$-pattern leaves.
\item the number of its $(L,\Gamma)$-{\bf restrictions} is the number of its $L$-pattern leaves that are labelled by variables from $\Gamma$, plus the number of its $L$-repetitions.
\end{enumerate}
\end{definition}

\begin{definition}
Let $\I$ be the family of the trees with internal nodes labelled by a
 connective and leaves without labelling, i.e. the family of tree-structures.
\end{definition}
The generating function of $\I$ satisfies $I(z) = z+2I(z)^2$, that implies $I(z) = (1-\sqrt{1-8z})/4$ and
thus its dominant singularity is $\nicefrac18$.\\

We can, for example, define the unambiguous pattern language $N$ by induction as follows: $N = \bullet | N\lor N | N\land \boxempty$, meaning that a pattern from  $N$ is either a single pattern leaf, or a tree rooted by $\lor$ whose two subtrees are patterns from $N$, or a tree rooted by $\land$ whose left subtree is a pattern from $N$ and whose right subtree is a placeholder. Its generating function verifies, by symbolic arguments, $n(x,y) = x + n(x,y)^2 + yn(x,y)$ and is equal to $n(x,y) = \frac12(1-y-\sqrt{(1-y)^2-4x})$. It is thus subcritical for $\mathcal{I}$.

On the left-hand side of~Fig.~\ref{fig:true}, we have depicted a Boolean tree that computes the constant function $\true$. It has $5$ $N$-pattern leaves, $1$ $N$-repetition and $2$ $(N,\{x_2\})$-restrictions.

\begin{figure}[t]
\centering
\begin{tabular}{c|c}
\begin{tikzpicture}[style={level distance=1cm},level 1/.style={sibling distance=3cm},level 2/.style={sibling distance=2cm}, scale=0.8]
 \node [circle] (z){$\vee$}
	child {node [circle] (x) {$\vee$}
		child{node [circle] (a) {$\vee$}
				child {node [circle] (y) {$x_1$}}
				child {node [circle] (w) {$\vee$}
					child {node [circle] (b) {$\bar{x}_1$}}
					child {node [circle] (c) {$x_2$}}
					}
				}
		child {node [circle] (d) {$x_3$}}
		}
	child {node [circle] (g) {$\wedge$}
 				child {node [circle] (e) {$x_4$}}
 				child {node [circle] (f) {$x_1$}} 
				}
  ;
\end{tikzpicture}
&
\begin{tikzpicture}[style={level distance=1cm},level/.style={sibling distance=40mm/#1}, scale=0.8]
 \node [circle] (z){$\vee$}
 child {node [circle] (a){$\vee$}
   	child{node [circle] (b) {$\vee$} 
 		child{node [circle] (c) {$\cdots$} }
 		child{node [circle] (d) {$\vee$} 
 			child{node [circle] (x) {$x$}}
 			child{node [circle] (y) {$\cdots$}}
 			}
 		}
 	child{node [circle] (e) {$\cdots$} }
 	}
 child {node [circle] (f) {$\vee$}
 	child{node [circle] (g) {$\cdots$} }
 	child{node [circle] (h) {$\vee$}
 		child{node [circle] (i) {$\bar{x}$} }
 		child{node [circle] (l) {$\cdots$} }
 		}
 	}
 ;
\end{tikzpicture}
\end{tabular}
\caption{Left: a Boolean tree that computes the function $\true$.
Right: a simple tautology.}
\label{fig:true}
\end{figure}

The following key-lemma is a generalization of Kozik's one~\cite[Lemma 3.8]{kozik08}: 
%\vspace*{1cm}

\begin{lemma}\label{lem:Jakub}
Let $L$ be an unambiguous pattern, and $\T$ the families of \ao trees.
Let $T^{[r]}_{n}$ (resp. $T^{[\geq r]}_{n}$) be the number
 of labelled (with at most $k_n$ variables) trees of $L[\T]$
of size $n$ and with $r$ $L$-repetitions (resp. at least $r$ L-repetitions).
We assume that $L$ is sub-critical for the family $\I$ of the unlabelled-leaves trees.
Then, asymptotically when $n$ tends to infinity,
\[\frac{T^{[r]}_{n}}{T_{n}} = \mathcal{O}\left(\rat_n^r\right) \hspace*{1cm}\text{ and }\hspace*{1cm}
\frac{T^{[\geq r]}_{n}}{T_{n}} = \mathcal{O}\left(\rat_n^r\right).\]
\end{lemma}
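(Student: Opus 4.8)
The plan is to reduce the statement to a counting estimate at the level of generating functions, using the sub-criticality of $L$ for $\I$ together with the enumeration formula of Proposition~\ref{prop:enum}, and then to track how the labelling factors $B_{n,k_n}$ enter the count when we prescribe the number of $L$-repetitions. First I would forget the labels entirely and work with tree-structures: write a tree of $L[\T]$ of size $n$ as a pattern from $L$ with $d$ pattern leaves and $p$ placeholders into which $p$ tree-structures (elements of $\I$) of total size $n-d$ have been plugged. Since $L$ is unambiguous this decomposition is unique, so the bivariate generating function of the unlabelled family is $\ell(z, I(z))$ with $I(z)=(1-\sqrt{1-8z})/4$; sub-criticality guarantees that $\ell(x,y)$ is analytic beyond $(\nicefrac18, I(\nicefrac18))$, hence $\ell(z,I(z))$ has the same square-root singularity $\nicefrac18$ as $I(z)$, and the number of unlabelled trees of $L[\I]$ of size $n$ is $\Theta(8^n n^{-3/2})$, i.e. of the same order as $C_n$.

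Next I would put the labels back. The key observation is that, in a tree with $r$ $L$-repetitions, the $d$ pattern leaves carry only $d-r$ distinct variables, while the leaves sitting inside the plugged trees from $\T$ are unconstrained. So, mimicking the proof of Proposition~\ref{prop:enum}, the number of labelled trees of $L[\T]$ of size $n$ with a \emph{fixed} unlabelled shape having $d$ pattern leaves, $p$ placeholders, and plugged subtrees of sizes $n_1,\dots,n_p$ (summing to $n-d$) is, up to the universal $2$-factors and the $2^{-p}$ correction of Proposition~\ref{prop:enum}, controlled by a product of the form $\bigl(\text{Stirling-type factor for the } d \text{ pattern leaves forced into } d-r \text{ classes}\bigr)\times\prod_i B_{n_i,k_n}$ — since within each plugged subtree the variable partition contributes exactly a $B_{n_i,k_n}$-type factor (after including the $2^{-p_i}$), and the dominant singularity of $z\mapsto\sum_m B_{m,k_n}(\text{shape count})_m z^m$ is again governed by the Catalan-type growth. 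The ratio $T^{[r]}_n/T_n$ is then, after summing over shapes, essentially
\[
\frac{T^{[r]}_{n}}{T_{n}} = \mathcal{O}\!\left(\frac{\text{(size-}n\text{ coeff. of the } r\text{-repetition series)}}{C_n\, B_{n,k_n}}\right),
\]
and the combinatorial cost of forcing $r$ repetitions among the pattern leaves is exactly a factor $B_{n-1,k_n}/B_{n,k_n}$ \emph{per} repetition: each repeated pattern leaf, instead of opening a fresh variable class and contributing (asymptotically) a factor $B_{\cdot,k_n}/B_{\cdot+1,k_n}$-worth of labelling freedom, reuses an existing one — and since the plugged trees live on tree-structures with one fewer leaf available, this is quantified by $\rat_n = B_{n-1,k_n}/B_{n,k_n}$. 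Iterating gives the bound $\mathcal{O}(\rat_n^r)$.

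Finally, the "$\geq r$" version follows from the "$=r$" version by summing: $T^{[\geq r]}_n = \sum_{j\geq r} T^{[j]}_n$, and because $\rat_n \to 0$ (which is the content of Lemmas~\ref{lem:kpetit} and~\ref{lem:kgrand}, giving $\rat_n = \Theta(1/k_n)$ or $\Theta(\ln n/n)$) and the number of possible repetition values is $O(n)$ while each successive term is suppressed by a further factor $O(\rat_n)$ (up to a $\mathrm{poly}(n)$ slack that is absorbed since $\rat_n$ decays polynomially), the geometric tail is dominated by its first term $\mathcal{O}(\rat_n^r)$. The main obstacle I expect is the second step: making rigorous the claim that prescribing $r$ repetitions among the $L$-pattern leaves costs \emph{exactly} a factor $\Theta(\rat_n)$ each, uniformly in the shape and in the sizes $n_i$ of the plugged subtrees. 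This requires the uniform control of $B_{m,k_n}/B_{m+1,k_n}$ provided by Lemma~\ref{lem:u_n}, applied not at $m=n$ but across the whole range of subtree sizes, together with a singularity-analysis transfer (à la \cite{FS09}) that is legitimate precisely because $L$ is sub-critical, so that the singular behaviour is concentrated on the plugged $\I$-trees and the pattern part only contributes an analytic multiplicative constant. Once that uniform estimate is in place, the rest is bookkeeping modelled on Proposition~\ref{prop:enum}.
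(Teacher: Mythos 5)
There are two genuine gaps. The first is in your central counting step: you propose to control the labelled count of a fixed shape by a product $\prod_i B_{n_i,k_n}$ over the plugged subtrees, on the grounds that ``within each plugged subtree the variable partition contributes exactly a $B_{n_i,k_n}$-type factor.'' This is false: the constraint of Proposition~\ref{prop:enum} is a \emph{global} partition of all $n$ leaves into at most $k_n$ variable classes, and variables are freely shared across subtrees, so the labelling count does not factor (compare $B_{n/2,k}^2$ with $B_{n,k}$ --- they are of completely different orders). The paper's proof keeps the labelling global: it first chooses the $r+j$ pattern leaves involved in the repetitions and merges them into $j$ classes (factors $\binom{d}{r+j}{r+j\brace j}$), which reduces the effective number of leaves to be partitioned by $r$, yielding a \emph{single} factor $B_{n-r-j+1,k_n}$ for the whole tree; the bound $B_{n-r,k_n}/B_{n,k_n}=\mathcal{O}(\rat_n^r)$ then follows by telescoping through Lemmas~\ref{lem:kpetit} and~\ref{lem:kgrand}. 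Your per-subtree decomposition cannot be repaired into this form without essentially abandoning it, and the ``uniform control across subtree sizes $n_i$'' you flag as the main obstacle is a symptom of having set up the wrong decomposition rather than a technical detail to be filled in.

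The second gap is your derivation of the $T^{[\geq r]}_n$ bound by summing $T^{[j]}_n=\mathcal{O}(\rat_n^j)$ over $j\geq r$. The implied constants in such bounds grow with $j$ (in the paper's estimate they involve ${r+j\brace j}$ and $(r+j)$-th derivatives of $\ell$), and $j$ ranges up to order $n$, so the tail is not dominated by its first term without uniform control you do not have. Your claim that a $\mathrm{poly}(n)$ slack ``is absorbed since $\rat_n$ decays polynomially'' is also wrong in general: $\rat_n=\Theta(1/k_{n+1})$, and $k_n$ may grow slower than any power of $n$ (e.g.\ $k_n=\lceil\ln n\rceil$). The paper avoids all of this by bounding $T^{[\geq r]}_n$ \emph{directly} --- the index $j$ in its sum is the number of distinct variables among the repeated leaves and runs only from $1$ to $r$, a finite sum independent of $n$ --- and then obtains the $T^{[r]}_n$ bound for free from $T^{[r]}_n\leq T^{[\geq r]}_n$. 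Your first step (sub-criticality forces $\ell(z,I(z))$, and more generally the $x$-derivatives $z^p\partial_x^p\ell(z,I(z))$, to share the singularity $\nicefrac18$ of $I$, so the structural part contributes only constants) is correct and is exactly how the paper uses sub-criticality.
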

%\nnote{precise-t-on que c'est quand $k_n$ tend vers l'infini ? -> non, c'est une hypothèse que $k_n$ tend vers l'infini quand $n$ tend vers $+\infty$.}
\begin{proof}
The number of labelled trees of $L[\T]$ of size $n$ and with at least $r$ $L$-repetitions is given by:
\[T_{n}^{[\geq r]} = \sum_{d=r+1}^{n} I_n(d) Lab(n,k_n,d,r),\]
where $I_n(d)$ is the number of tree-structures with $d$ L-pattern leaves and
the number $Lab(n,k_n,d,r)$ corresponds to the number of leaf-labellings of these trees giving at least $r$ $L$-repetitions.
The following enumeration contains some double-counting and we therefore get an upper bound:
\[Lab(n,k_n,d,r) \leq 2^n \cdot  \sum_{j=1}^{r}\binom{d}{r+j} {r+j \brace j} B_{n-r-j+1,k_n}.\]
The factor $2^n$ corresponds to the polarity of each leaf (the variable labelling it is either negated or not);
the index $j$ stands for the number of different variables involved in the $r$ repetitions;
the binomial factor chooses the pattern leaves that are involved in the $r$ repetitions;
the Stirling number partition splits $r+j$ leaves into $j$ parts;
finally, the factor $B_{n-r-j+1,k_n}$ chooses which variable is assigned to each class of leaves.
Therefore,
\[T_{n}^{[\geq r]} 
\leq 2^n\cdot B_{n-r,k_n} \sum_{j=1}^{r}  {r+j \brace j} \sum_{d=r+j}^{n} I_n(d) \binom{d}{r+j}.\]
Let $\ell(x,y)$ be the generating function of the pattern $L$. Then, for all $p\geq 0$,
\[\frac{z^p}{p!} \frac{\partial^p \ell}{\partial x^p}(z,I(z)) = \sum_{n=1}^{\infty}\sum_{d=1}^{\infty} I_n(d)\binom{d}{p}z^n.\]
Thus, 
\[\frac{T_{n}^{[\geq r]}}{T_{n, k_n}} \leq 
\frac{B_{n-r,k_n}}{B_{n,k_n}}\sum_{j=1}^r {r+j \brace j} \frac{[z^n]z^{r+j}\frac{\partial^{r+j} \ell}{\partial x^{r+j}}(z,I(z))}{[z^n]I(z)}.\]
Since $z^{r+j}\frac{\partial^{r+j} \ell}{\partial x^{r+j}}(z,I(z))$ and $I(z)$
have the same singularity because of the sub-criticality of the pattern $L$ according to $\I$,
the previous sum is constant when $n$ tends to infinity and so
we conclude:
\[\frac{T_{n}^{[r]}}{T_{n}} 
\leq \frac{T_{n}^{[\geq r]}}{T_{n}} 
= O\left(\frac{B_{n-r, k_{n}}}{B_{n, k_n}} \right)
= O\left(\rat_n^r\right).\]
\indent \hfill\qed
\end{proof}

\section{Behaviour of the probability distribution}\label{sec:proba}

Once we have adapted the pattern theory to our model,
we are ready to quantitatively study it.
A first step is to understand the asymptotic behaviour of $\IP_{n}\langle \true\rangle$.
It is indeed natural to focus on this ``simple'' function before considering a general class $\langle f \rangle$;
and moreover, it happens to be essential for the continuation of the study.
In addition, the methods used to study tautologies (mainly pattern theory)
will also be the core of the proof for a general equivalence class.
We prove in this section the main Theorem~\ref{thm:theta} for both classes $\langle \true \rangle$ and $\langle \false\rangle$
of complexity zero, using the
duality of both connectives $\land$ and $\lor$ and both positive and negative literals.
The main ideas of the proof for a general equivalence class will be detailed in Section~\ref{sec:main},
but the details will be postponed into Appendix~\ref{sec:general_class}.

\subsection{Tautologies}

Let us recall that a {\bf tautology} is a tree that represents the Boolean function $\true$.
Let us consider the family $\A$ of tautologies.
In this part, we prove that the probability of $\langle \true \rangle$ is
equivalent to the ratio of a simple subset of tautologies.

\begin{definition}[cf. right-hand side of Fig.~\ref{fig:true}]
A {\bf simple tautology} is an \ao tree that contains two leaves
labelled by a variable $x$ and its negation $\bar{x}$ and
such that all internal nodes from the root to both leaves are labelled by $\lor$-connectives.
We denote by $ST$ the family of simple tautologies.
\end{definition}
%\begin{figure}
%\centering
%\begin{tikzpicture}[style={level distance=1cm},level/.style={sibling distance=40mm/#1}]
% \node [circle] (z){$\vee$}
% child {node [circle] (a){$\vee$}
%   	child{node [circle] (b) {$\vee$} 
% 		child{node [circle] (c) {$\bigtriangleup$} }
% 		child{node [circle] (d) {$\vee$} 
% 			child{node [circle] (x) {$x$}}
% 			child{node [circle] (y) {$\bigtriangleup$}}
% 			}
% 		}
% 	child{node [circle] (e) {$\bigtriangleup$} }
% 	}
% child {node [circle] (f) {$\vee$}
% 	child{node [circle] (g) {$\bigtriangleup$} }
% 	child{node [circle] (h) {$\vee$}
% 		child{node [circle] (i) {$\bar{x}$} }
% 		child{node [circle] (l) {$\bigtriangleup$} }
% 		}
% 	}
% ;
%\end{tikzpicture}
%\caption{Example of a simple tautology.}
%\label{fig:st}
%\end{figure}
%In order to prove Theorem~\ref{thm:theta} for the class $\langle \true\rangle$
%and even to give the more precise result $\IP_{n}\langle \true\rangle\sim \nicefrac34\cdot\rat_n$,
%when $n$ tend to infinity, 
%first we compute the ratio of simple tautologies.
%\MP{bizarre, on devrait trouver 3/4 ???, voir notre article long -> on avait résolu ce problème, non ? c'est ok ?}
\begin{proposition}\label{prop:tauto}
The ratio of simple tautologies verifies
\[\mu_{n}(ST) = \frac{ST_{n}}{T_{n}} \sim \frac34 \rat_n, \text{ when $n$ tends to infinity.}\]
Moreover, asymptotically when $n$ tends to infinity, almost all tautologies are simple tautologies.
\end{proposition}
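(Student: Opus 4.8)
The plan is to split the statement into two parts: first computing the ratio $\mu_n(ST) \sim \tfrac34 \rat_n$ exactly, and then showing that the family $\A \setminus ST$ of non-simple tautologies is negligible, i.e. $\mu_n(\A \setminus ST) = o(\rat_n)$.

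For the first part, I would describe $ST$ as a composition of pattern languages in the sense introduced above. A simple tautology is obtained by choosing a tree-structure whose root is connected to two distinguished leaves by a path of $\lor$-nodes, labelling those two leaves by $x$ and $\bar x$ for some variable $x$, and labelling the rest of the tree freely (subject to the $k_n$-variable constraint). The natural pattern language here is the one governing the ``or-spine'': the pattern $M$ with $M = \boxempty \mid M \lor M$ conditioned to contain exactly two marked placeholders which will receive the literals $x, \bar x$, while all other placeholders receive arbitrary trees from $\T$. I would write $ST_n$ as a sum over the shape of this spine and the size distribution of the subtrees, getting $ST_n = C \cdot (\text{structural factor}) \cdot (\text{labelling factor})$. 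The key point is that fixing two leaves to carry the \emph{same} variable forces one merged variable-class, so the labelling factor is $B_{n-1,k_n}$ (one fewer ``free'' variable choice) times a factor $2^{2n-1-\cdots}$ for polarities, rather than $B_{n,k_n}$; dividing by $T_n = C_n \sum_p {n \brace p} 2^{2n-1-p}$ from Proposition~\ref{prop:enum}, and noting that the spine of $\lor$'s together with the Catalan and power-of-two bookkeeping contributes the factor $\tfrac34$, gives $\mu_n(ST) \sim \tfrac34 \, B_{n-1,k_n}/B_{n,k_n} = \tfrac34 \rat_n$. The constant $\tfrac34$ should emerge from summing the generating series of the $\lor$-spine pattern evaluated at the relevant singularity, analogously to Kozik's computation in the fixed-$k$ case; I would verify it by a singularity analysis of the explicit algebraic generating function, exactly as the $n(x,y)$ computation in the excerpt.

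For the second part, the strategy is the one suggested by the ``typical tree'' heuristic in Section~\ref{sec:results}: a tautology that is not simple must either contain a repetition that is not the single $x/\bar x$ pair used by the simple-tautology structure, or must realize its tautological character through a pattern involving $\land$-nodes on the root-to-leaves paths, and in both cases this costs an extra factor of $\rat_n$ by Lemma~\ref{lem:Jakub}. Concretely, I would invoke Kozik's classification: up to the simple-tautology family, every tautology lies in $L[\T]$ for an unambiguous, sub-critical (with respect to $\I$) pattern language $L$ in which every element has at least two $L$-repetitions — because forcing the function to be $\true$ while escaping the simple structure requires at least two coincidences among pattern leaves. Applying Lemma~\ref{lem:Jakub} with $r = 2$ then yields $\mu_n(\A \setminus ST) = O(\rat_n^2) = o(\rat_n)$, since $\rat_n \to 0$ by Lemmas~\ref{lem:kpetit} and~\ref{lem:kgrand}. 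Combined with the first part, this gives both $\mu_n(ST) \sim \tfrac34 \rat_n$ and $\mu_n(\A) \sim \mu_n(ST)$, i.e. almost all tautologies are simple.

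The main obstacle I anticipate is the second part: setting up the pattern language that captures ``all tautologies modulo simple ones'' and proving it is both unambiguous and sub-critical, and — crucially — that each of its patterns has at least two $L$-repetitions. This is exactly where Kozik's pattern theory does the heavy lifting in the fixed-$k$ model, and the adaptation here requires checking that the combinatorial decomposition still goes through when the variable budget $k_n$ grows with $n$; fortunately Lemma~\ref{lem:Jakub} has already been stated in the $k_n$-dependent form, so the remaining work is to exhibit the right $L$ and confirm sub-criticality via the generating-function bound, which parallels the $N$ and $n(x,y)$ example worked out above. The first part is more of a careful bookkeeping exercise, the only subtlety being to get the constant $\tfrac34$ right by tracking the $2^{-p}$ corrections and the $\lor$-spine series precisely.
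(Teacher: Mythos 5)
Your overall strategy is the one the paper follows --- an or-spine pattern computation for the ratio of simple tautologies, plus a ``non-simple tautologies have too many repetitions'' argument via Lemma~\ref{lem:Jakub} --- but both halves of your sketch stop short of the step that actually carries the proof. In the first half, what your decomposition counts is the number of pairs (tree, complementary pair of leaves on the $\lor$-spine), not the number of simple tautologies: a simple tautology realized simultaneously by two disjoint pairs $x/\bar x$ and $y/\bar y$ is produced twice by your construction, so the computation only yields an \emph{upper} bound of the form $\tfrac34\rat_n(1+o(1))$. To get the asymptotic equivalence you must show the over-count is $o(\rat_n)$; the paper does this by a Bonferroni-type correction, counting configurations with three marked spine leaves $\alpha/\alpha/\bar\alpha$ and four marked spine leaves $\alpha/\bar\alpha/\beta/\bar\beta$ and showing each such family has ratio $O(\rat_n^2)$ via Lemma~\ref{lem:Jakub}, whence $ST_n\geq DC_n-(DC_n^{(3)}+DC_n^{(4)})$. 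You never mention this. A smaller issue: your pattern $M=\boxempty\mid M\lor M$ is ambiguous (a tree rooted at $\lor$ can be parsed either as one placeholder or as a spine node); the paper instead uses $S=\bullet\mid S\lor S\mid\boxempty\land\boxempty$, whose pattern leaves are exactly the leaves joined to the root by $\lor$-only paths, which is what makes the marking unambiguous.

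In the second half you correctly guess the shape of the argument (every non-simple tautology has at least two repetitions with respect to a suitable unambiguous sub-critical pattern, hence ratio $O(\rat_n^2)$), but you explicitly defer the one step that is the mathematical content of this part: exhibiting the pattern and proving the ``at least two repetitions'' claim. The paper's argument is concrete: take $N=\bullet\mid N\lor N\mid\boxempty\land N$ and the composition $N[N]$; every tautology has at least one $N[N]$-repetition (otherwise assign all $N[N]$-pattern leaves to $\false$ and the tree computes $\false$); and a tautology with \emph{exactly} one $N[N]$-repetition is forced to be a simple tautology, by the same assignment argument showing the repetition must be an $x/\bar x$ pair among the first-level $N$-pattern leaves and that no $\land$-node can separate these leaves from the root. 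Without that classification, Lemma~\ref{lem:Jakub} with $r=2$ has nothing to apply to, so as written your second part is a plan rather than a proof. Note also that repetitions are a property of the labelled tree in $L[\T]$, not of the pattern elements of $L$, so the statement you want is ``every non-simple tautology, viewed in $N[N][\T]$, has at least two $N[N]$-repetitions,'' not ``every element of $L$ has two repetitions.''
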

The detailed proof is given in Appendix~\ref{sec:tauto}.

The latter proposition gives us for free the proof of Theorem~\ref{thm:satis}. 
In fact, both dualities between the two connectives and positive and negative literals
transform expression computing $\true$ to expressions computing $\false$, 
% j'enlève les crochets car la classe d'équivalence de Vrai est {Vrai} et celle de Faux est {Faux}
which implies $\IP_n\langle \false\rangle = \nicefrac{3}{4}\cdot\rat_n$.
Moreover, the only expressions that are not satisfiable compute the function $\false$ and 
$\IP_n\langle \false\rangle = \nicefrac{3}{4}\cdot\rat_n$ tends to $0$ as $n$ tends to infinity, 
which proves Theorem~\ref{thm:satis}.

\subsection{Probability of a general class of functions}\label{sec:main}

With similar arguments than those used for tautologies,
we prove that the probability of the class of projections (i.e. $(x_i)_{i\geq 1} \mapsto x_j$)
is equivalent to $\nicefrac58 \cdot \rat_n$. The proof is detailed in Appendix~\ref{sec:projections}.

Let us turn now to the general result:
the behaviour of $\IP_n\langle f\rangle$ for all fixed $f\in \F$.
The main idea of this part is that, roughly speaking,
\emph{a typical tree computing a Boolean function in $\langle f\rangle$ is a minimal tree
of $\langle f\rangle$ in which has been plugged a single large tree}.
Here we give the main ideas of the proof of Theorem~\ref{thm:theta},
the complete proof is given in Appendix~\ref{sec:general_class}.

\begin{proof}[sketch]
For a given class of Boolean functions $\langle f\rangle$ our goal is to 
obtain an asymptotic equivalent to $\IP_n\langle f\rangle$.
\begin{itemize}
\item We first define several notions of \emph{expansions} of a tree: the idea is to replace
in a tree, a subtree $S$
by $T \land S$, where $T$ is chosen such that the expanded tree still computes the same function.

\item The ratio of minimal trees of $\langle f\rangle$ expanded once is of the order of $\rat_n^{R(f)+1}$.

\item The ratio of trees computing a function from $\langle f\rangle$ is equivalent to the ratio of minimal trees expanded once.
\end{itemize}
The most technical part of the proof is the last one, because we need a precise upper bound of $\IP_n\langle f\rangle$.
But the ideas are more or less the same as those developed for the class $\langle \true\rangle$.\hfill\qed
\end{proof}

\section{Conclusion}

We focus on the logical context of \ao connectives
% in order to adapt the pattern theory of Kozik and 
because of the richness of this logical system (normal forms, functional completeness).
However the implicational logical system (e.g.~\cite{FGGG12,GK12}) could also be studied in this new context
and we deeply believe the general behaviour to be identical.
Indeed, the key idea is that \emph{each repetition induces a factor $\rat_n$},
and this remains true in all those models -- although pattern theory does not adapt to every model, e.g. models with \emph{implication}.
Extending our results to these models would give nice unifications
of the known results of the literature:
papers~\cite{kozik08,FGGG12,GK12} and~\cite{GGKM12,GGKM13}.

With our new model, we can now relate the large number of results
that have been obtained during the last decade on quantitative logics
to problems about satisfiability.
Our Catalan model of expressions behaves differently since,
asymptotically, almost all expressions are satisfiable, whatever the ratio between the number of variables
and the size of expressions.

To conclude, the specific form of expressions in $k-SAT$ problems deeply bias the probability distribution on Boolean functions.

\bibliography{boolean}

\newpage
\appendix

\section{Proofs of the technical core}\label{app:technic_core}

\begin{proof}[of Lemma~\ref{lem:croissance}]
{\it (i)} 
Let us prove that the sequence $(a_p)_{1\leq p\leq n}$ is log-concave, 
i.e. that the sequence $\left(\frac{a_{p+1}}{a_p}\right)_{1\leq p\leq n-1}$ is decreasing.  
Let $p$ be an integer in $\{1,\dots, n-1\}$. By Definition of $a_p^{(n)}$:
\[\frac{a_{p+1}^{(n)}}{a_p^{(n)}} = \left(\frac{p+1}{p}\right)^n\frac1{2(p+1)},\]
and consequently, for all $n\geq 0$,
\[\frac{a_{p+1}^{(n)}}{a_p^{(n)}} > 1 \iff%\Longleftightarrow
 n\ln\left(\frac{p+1}{p}\right)-\ln(2(p+1)) > 0.\]
The function $\phi \,:\, p\mapsto n\ln\left(\frac{p+1}{p}\right)-\ln(2(p+1))$ is strictly decreasing.
Since $\phi(1)$ tends to $+\infty$ and $\phi(n-1)$ tends to $-\infty$ when $n$ tends to infinity, 
there exists a unique $M_n$ such that $(a_p)$ is strictly increasing on $\{1,\dots,M_n\}$
and strictly decreasing on $\{M_n+1,\dots, n\}$.\\
{\it (ii)} Let us denote by $x_n$ the single solution of equation:
\begin{equation}\label{eq:max}
\left(\frac{x+1}{x}\right)^n\frac1{2(x+1)} = 1.
\end{equation}
First remark that the sequence $(x_n)_{n\geq 1}$ is increasing. We indeed know: $\phi_n(x_n) = 0$ and $\phi_{n+1}(x_{n+1}) = 0$,
which implies that $\phi_n(x_{n+1}) = -\ln \left(1+\frac{1}{x_{n+1}}\right) < 0$.
Therefore, since $(\phi_n)_{n\geq 1}$ is decreasing, we have that $x_{n+1}\geq x_n$, for all large enough $n$.
Therefore, the sequence $(M_n)_{n\geq 1}$ is asymptotically increasing.

Since, asymptotically when $n$ tends to infinity,
\[\left(\frac{\frac{n}{\ln n}+1}{\frac{n}{\ln n}}\right)^n\frac1{2(\frac{n}{\ln n}+1)} \sim \frac{\ln n}{2},\]
we have that $n / \ln n\leq x_n$ and therefore,
$x_n$ tends to infinity. Thus,
Equation~(\ref{eq:max}) evaluated in $x_n$ is equivalent to
\begin{equation}\label{eq:M_n}
n\ln\left(1+\frac1{x_n}\right) = \ln 2 +\ln (x_n+1),
\end{equation}
which implies $x_n\ln x_n \sim n$ when $n$ tends to infinity.
We easily deduce from this asymptotic relation that
$\ln x_n \sim \ln n$ and that $x_n \sim \frac{n}{\ln n}$ when $n$ tends to infinity.
Since $M_n = \lfloor x_n \rfloor$, we conclude that $M_n \sim \nicefrac{n}{\ln n}$ when $n$ tends to infinity.
\hfill\qed
\end{proof}

In view of Proposition~\ref{fact:inegalite}, we have the following bounds:
\begin{equation}\label{eq:bounds}
\frac12 \cdot \sum_{p=1}^{u_n-1} \frac{p^n}{p!\ 2^p} + \frac{u_n^n}{u_n!\ 2^{u_n}} \leq B_{n,u_n} 
\leq \sum_{p=1}^{u_n} \frac{p^n}{p!\ 2^p}.
\end{equation}

\begin{proof}[of Lemma~\ref{lem:u_n} {\it (i)}]
Via Proposition~\ref{fact:inegalite}, we can bound $B_{n,u_n}$ : for all $n\geq 1$,
\begin{equation}\label{eq:infini_bounds}
\frac12 \cdot \sum_{p=1}^{u_n-1} \frac{p^n}{p!\ 2^p} + \frac{u_n^n}{u_n!\ 2^{u_n}} \leq B_{n,u_n} 
\leq \sum_{p=1}^{u_n} \frac{p^n}{p!\ 2^p}.
\end{equation}

Let us assume that $u_n\leq M_n$ for all large enough $n$, and let us prove that the two bounds of Equation~\eqref{eq:infini_bounds} are of the same asymptotic order when $n$ tends to $+\infty$.

Remark that for all integer $N\geq 1$, $S_N=\sum_{p=1}^{N} a_p^{(n)}$ : Equation~\eqref{eq:infini_bounds} implies
\[\frac{1}{2}S_{u_n} \leq B_{n,u_n} \leq S_{u_n} .\]
Let us split the sum $S_{u_n}$ into two sums: the last $\delta_n$ summands, and the rest.
\[S_{u_n} = S_{u_n-\delta_n-1} + \sum_{p=u_n-\delta_n}^{u_n} a_{p}.\]
By assumption, $\delta_n = o(u_n)$ and we therefore can choose $n$ large enough such that $u_n>\delta_n$.
Let us prove that $S_{u_n-\delta_n-1}$ is negligible in front of $a_{u_n}$, and thus in front of $\sum_{p=u_n-\delta_n}^{u_n} a_p$. Recall that $(a_p)_{p\geq 1}$ is increasing on $\{1,\ldots, M_n\}$, which implies
\[S_{u_n-\delta_n-1} \leq u_n a_{u_n-\delta_n}.\]
For all large enough $n$, via Stirling formula,
\begin{align*}
\frac{a_{u_n-\delta_n}}{a_{u_n}}
& = 2^{\delta_n}\left(\frac{u_n-\delta_n}{u_n}\right)^n \frac{u_n !}{(u_n-\delta_n)!}\\
& = \left(\frac{2u_n}{\e}\right)^{\delta_n}\left(\frac{u_n-\delta_n}{u_n}\right)^{n-u_n+\delta_n-\nicefrac12}(1+o(1))\\
& = \exp\left[\delta_n\ln 2 -\delta_n+\delta_n u_n + (n-u_n+\delta_n-\nicefrac12)\ln\left(1-\frac{\delta_n}{u_n}\right)+o(1)\right].
\end{align*}
Since $\delta_n = o(u_n)$, we have $\ln\left(1-\frac{\delta_n}{u_n}\right) = -\frac{\delta_n}{u_n} - \frac{\delta_n^2}{2u_n^2}$, and
\begin{align*}
\frac{a_{u_n-\delta_n}}{a_{u_n}}
& = \exp\left[\delta_n\ln 2 -\delta_n+\delta_n u_n - \frac{n\delta_n}{u_n} + \delta_n - \frac{n\delta_n^2}{2u_n^2}+ \mathcal{O}\left(\frac{n\delta_n^2}{u_n^2}\right)\right]\\
& = \exp\left[\delta_n\ln 2 +\delta_n u_n - \frac{n\delta_n}{u_n} - \frac{n\delta_n^2}{2u_n^2}+ o\left(\frac{n\delta_n^2}{u_n^2}\right)\right],
\end{align*}
because, by hypothesis, $\ln u_n = o\left(\frac{\delta_n^2}{u_n}\right)$, which implies $\frac{n\delta_n^2}{u_n^2} = \Omega(\ln u_n)$.
Since $u_n\leq M_n$, and in view of Equation~\eqref{eq:M_n}, $\frac{n}{M_n} \geq \ln 2 + \ln M_n$. Therefore,
\begin{align*}
\frac{a_{u_n-\delta_n}}{a_{u_n}}
&  \leq \exp\left[\delta_n\ln 2 +\delta_n M_n - \frac{n\delta_n}{M_n} - \frac{n\delta_n^2}{2u_n^2}+ o\left(\frac{n\delta_n^2}{u_n^2}\right)\right]\\
& \leq \exp\left[- \frac{n\delta_n^2}{2u_n^2}+ o\left(\frac{n\delta_n^2}{u_n^2}\right)\right].
\end{align*}
Since $\frac{n\delta_n^2}{u_n^2} = \Omega(\ln u_n)$, we can conclude that
\[\frac{S_{u_n-\delta_n-1}}{a_{u_n}} \leq u_n \frac{a_{u_n-\delta_n}}{a_{u_n}} \leq \exp\left[\ln u_n - \frac{n\delta_n^2}{2u_n^2}+ o\left(\frac{n\delta_n^2}{u_n^2}\right)\right] = o(1).\]
It implies $S_{u_n} \sim \sum_{p=u_n-\delta_n}^{u_n} a_{p}$, which ends the proof.
\indent\hfill\qed
\end{proof}

\begin{proof}[of Lemma~\ref{lem:u_n},  {\it (ii)}]
Assume that $u_n\geq M_n$ for all large enough $n$.
Let us split the sums of the two bounds of Equation~\eqref{eq:infini_bounds} into three parts: the first from index 1 to $M_n-\delta_n-1$, the second from index $M_n-\delta_n$ to $M_n + \eta_n$, and the third from index $M_n + \eta_n+1$ to $u_n$. Remark that, if $u_n\leq M_n+\eta_n$, then the third part equals zero and the second part is truncated:
\[S_{u_n} = S_{M_n-\delta_n-1} + \sum_{p=M_n-\delta_n}^{M_n+\eta_n} a_p + \sum_{p=M_n+\eta_n+1}^{u_n} a_p.\]

By arguments similar to those developped in the proof of assertion {\it (i)}, we can prove that $S_{M_n-\delta_n-1}$ is negligible in front of $a_{M_n}$, and thus in front of $\sum_{p=M_n-\delta_n}^{M_n+\eta_n} a_p$. Therefore, if $u_n\geq M_n+\eta_n$, assertion {\it (ii)} is proved.
Let us now assume that $u_n\geq M_n+\eta_n+1$:to end the proof, we have to prove that $\sum_{p=M_N+\eta_n+1}^{u_n} a_p$ is negligible in front of $a_{M_n}$, and thus in front of $\sum_{p=M_n-\delta_n}^{M_n+\eta_n} a_p$.

In view of Lemma~\ref{lem:croissance}, we have
\[\sum_{p=M_n+\eta_n+1}^{u_n} a_p \leq (u_n-M_n-\eta_n) a_{M_n+\eta_n}.\]
Via Stirling formula,
\begin{align*}
\frac{a_{M_n+\eta_n}}{a_{M_n}}
& = 2^{-\eta_n} \left(\frac{M_n+\eta_n}{M_n}\right)^n \frac{M_n!}{(M_n+\eta_n)!}\\
& = \left(\frac{2(M_n+\eta_n)}{\e}\right)^{-\eta_n}   \left(\frac{M_n+\eta_n}{M_n}\right)^{n-M_n-\nicefrac12} (1+o(1))\\
& = \exp\left[-\eta_n\ln 2 + \eta_n - \eta_n\ln(M_n+\eta_n) + (n-M_n-\nicefrac12)\ln \left(1+\frac{\eta_n}{M_n}\right)+o(1)\right].
\end{align*}
Since, by hypothesis, $\ln \left(1+\frac{\eta_n}{M_n}\right) \leq \frac{\eta_n}{M_n}$ and $\frac{\eta_n}{M_n} = o(1)$, we have
\begin{align*}
\frac{a_{M_n+\eta_n}}{a_{M_n}} 
& \leq \exp\left[-\eta_n\ln 2 + \eta_n - \eta_n\ln (M_n+\eta_n) + \frac{\eta_n}{M_n}(n-M_n-\nicefrac12)+o(1)\right]\\
& = \exp\left[-\eta_n\ln 2 + \eta_n - \eta_n\ln (M_n+\eta_n) + \frac{n\eta_n}{M_n}-\eta_n + o(1)\right]\\
& = \exp\left[-\eta_n\ln 2 - \eta_n\ln (M_n+\eta_n) + \frac{n\eta_n}{M_n} + o(1)\right]\\
& = \exp\left[-\eta_n\ln 2 - \eta_n\ln M_n - \eta_n\ln\left(1+\frac{\eta_n}{M_n}\right) + \frac{n\eta_n}{M_n} + o(1)\right]\\
& = \exp\left[-\eta_n\ln 2 - \eta_n\ln M_n - \frac{\eta^2_n}{M_n} + \frac{n\eta_n}{M_n} + \mathcal{O}\left(\frac{\eta_n^3}{M^2_n}\right)\right]
\end{align*}
Since $M_n = \lfloor x_n \rfloor$, we have
\[n\ln\left(1+\frac1{x_n}\right) = n\left(\frac1{M_n} - \frac{1}{2M_n^2} + \mathcal{O}\left(\frac{1}{M_n^3}\right)\right),\]
and
\[\ln 2 + \ln (x_n+1) = \ln 2 + \ln M_n + \mathcal{O}\left(\frac1{M_n}\right).\]
In view of Equation~\eqref{eq:M_n}, it implies
\[\frac{n}{M_n} = \ln 2 + \ln M_n + \frac{n}{2M_n^2} + \mathcal{O}\left(\frac{n}{M_n^3}\right) + \mathcal{O}\left(\frac1{M_n}\right) =  \ln 2 + \ln M_n + \frac{n}{2M_n^2} + \mathcal{O}\left(\frac{n}{M_n^3}\right)\]
since $\frac1{M_n} = o(\frac{n}{M_n^3})$. Thus
\begin{align*}
\frac{a_{M_n+\eta_n}}{a_{M_n}} 
& \leq \exp\left[ - \frac{\eta_n^2}{M_n} + \mathcal{O}\left(\frac{\eta_n^3}{M_n^2}\right) + \mathcal{O}\left(\frac{n\eta_n}{M^3_n}\right)\right]\\
& = \exp\left[- \frac{\eta_n^2}{M_n} + o\left(\frac{\eta_n^2}{M_n}\right)\right],
\end{align*}
because, by hypothesis,  $\frac{\eta_n^2}{M_n}$ tends to $+\infty$ when $n$ tends to $+\infty$. 
We thus get
\[\frac{\sum_{p=M_n+\eta_n+1}^{u_n} a_p}{a_{M_n}} 
\leq (u_n-M_n-\eta_n) \frac{a_{M_n+\eta_n}}{a_{M_n}}
\leq \exp\left[\ln (u_n-M_n) - \frac{\eta_n^2}{M_n} + o\left(\frac{\eta_n^2}{M_n}\right)\right] = o(1)
\]
since, by hypothesis, $\ln (u_n-M_n) = o\left(\frac{\eta_n^2}{M_n}\right)$.
Therefore, asymptotically when $n$ tends to $+\infty$,
\[S_{u_n}\sim \sum_{p=M_n-\delta_n}^{M_n+\eta_n} a_p,\]
which concludes the proof.
\indent \hfill\qed
\end{proof}

\begin{proof}[of Lemma~\ref{lem:kpetit}]
{\bf Let us first assume that $\bs{k_{n+1}\leq M_n}$.}
Let $(\delta_n)_{n\geq 1}$ an integer-valued sequence such that $\delta_n = o(k_{n+1})$ and $\frac{k_{n+1}\sqrt{\ln k_{n+1}}}{n} = o(\delta_n)$ when $n$ tends to $+\infty$.
Lemma~\ref{lem:u_n} applied to $u_n = k_{n+1}$ gives, asymptotically when $n$ tends to infinity,
\[B_{n,k_{n+1}} = \Theta\left(\sum_{p=k_{n+1}-\delta_n}^{k_{n+1}} a_p^{(n)}\right).\]
Moreover, since $k_{n+1}\leq M_{n+1}$, and since the sequence $(\delta_{n-1})_{n\geq 1}$ verifies $\delta_{n-1}=o(k_n)$ and $\frac{k_{n}\sqrt{\ln k_{n}}}{n} = o(\delta_{n-1})$, applying Lemma~\ref{lem:u_n} to the sequence $u_n = k_n$ gives us, asymptotically when $n$ tends to infinity,
\[B_{n,k_{n}} = \Theta\left(\sum_{p=k_{n}-\delta_{n-1}}^{k_{n}} a_p^{(n)}\right),\]
which implies
\[B_{n+1,k_{n+1}} = \Theta\left(\sum_{p=k_{n+1}-\delta_{n}}^{k_{n+1}} a_p^{(n+1)}\right).\]
Therefore,
\[\rat_{n+1} := \frac{B_{n,k_{n+1}}}{B_{n+1,k_{n+1}}} 
= \Theta\left(\frac{\sum_{p=k_{n+1}-\delta_n}^{k_{n+1}} a_p^{(n)}}{\sum_{p=k_{n+1}-\delta_{n}}^{k_{n+1}} a_p^{(n+1)}}\right).
\]
We have
\begin{eqnarray*}
(k_{n+1}-\delta_n)\sum_{p=k_{n+1}-\delta_{n}}^{k_{n+1}} a_p^{(n)}
&\leq& \sum_{p=k_{n+1}-\delta_{n}}^{k_{n+1}} p a_p^{(n)}
= \sum_{p=k_{n+1}-\delta_{n}}^{k_{n+1}} a_p^{(n+1)}\\
&=& \sum_{p=k_{n+1}-\delta_{n}}^{k_{n+1}} p a_p^{(n)}
\leq k_{n+1}\sum_{p=k_{n+1}-\delta_{n}}^{k_{n+1}} a_p^{(n)},
\end{eqnarray*}
which implies
\[\rat_{n+1} = \frac{B_{n,k_{n+1}}}{B_{n+1,k_{n+1}}} = \Theta\left(\frac1{k_{n+1}}\right).\]

{\bf Now, let us assume $\bs{M_{n+1} \geq k_{n+1} > M_n}$}. Let $(\delta_n)_{n\geq 1}$ be an integer-valued sequence such that $\delta_n = o(k_{n+1})$ and $\frac{k_{n+1}\sqrt{\ln k_{n+1}}}{n+1} = o(\delta_n)$. Let $(\eta_n)_{n\geq 1}$ be an integer-valued sequence such that $\eta_n = o(M_n)$, $\lim_{n\to+\infty}\frac{\eta_n^2}{M_n} = +\infty$ and $\sqrt{M_n\ln (u_n-M_n)}=o(\eta_n)$.
Applying Lemma~\ref{lem:u_n} {\it (ii)} to the sequence $u_n = k_n$, we obtain
\[B_{n,k_{n+1}} = \Theta\left(\sum_{p=M_n-\delta_{n}}^{\min\{M_n+\eta_n,k_{n+1}\}} a_p^{(n)}\right).\]
Moreover, since $\delta_{n-1} = o(k_{n})$ and $\frac{k_n\sqrt{\ln k_n}}{n} = o(\delta_{n-1})$, via Lemma~\ref{lem:u_n} {\it (i)},applied to the sequence $u_n=k_n$,
\[B_{n+1,k_{n+1}} = \Theta\left(\sum_{p=k_{n+1}-\delta_n}^{k_{n+1}} a_p^{(n+1)}\right).\]
Let us remark, as above, that
\[(k_{n+1}-\delta_n) \sum_{p=k_{n+1}-\delta_n}^{k_{n+1}} a_p^{(n)} \leq B_{n+1,k_{n+1}}\leq k_{n+1}\sum_{p=k_{n+1}-\delta_n}^{k_{n+1}} a_p^{(n+1)}.\]
Moreover, since $k_{n+1}\geq M_n$, via similar arguments as thos developped to prove Lemma~\ref{lem:u_n} {\it (i)},
\[\sum_{p=k_{n+1}-\delta_n}^{k_{n+1}} a_p^{(n)} \sim \sum_{p=k_{n+1}-\delta_n}^{\min\{k_{n+1},M_n+\eta_n\}} a_p^{(n)} \sim B_{n,k_{n+1}}.\]
Therefore, since $\delta_n = o(k_{n+1})$, we get
\[\rat_{n+1} :=\frac{B_{n,k_{n+1}}}{B_{n+1,k_{n+1}}}
= \mathcal{O}\left(\frac1{k_{n+1}-\delta_n}\right) = \mathcal{O}\left(\frac1{k_{n+1}}\right).\]
Similar arguments lead to,
\[\rat_{n+1} = \Omega\left(\frac1{k_{n+1}}\right),\]
which concludes the proof.
\indent\hfill\qed
\end{proof}

\begin{proof}[of Lemma~\ref{lem:kgrand}]
By hypothesis, $k_{n+1}\geq M_{n+1}$, which implies $k_{n+1}\geq M_n$.
Let $(\delta_n)_{n\geq 1}$ be a sequence of integers such that $\delta_n = o(M_n)$ and $\frac{M_n\sqrt{\ln M_n}}{n} = o(\delta_n)$. Let $(\eta_n)_{n\geq 1}$ be another sequence of integers such that $\eta_n = o(M_n)$, $\lim_{n\to+\infty}\frac{\eta_n^2}{M_n} = +\infty$ and $\sqrt{M_n\ln (k_{n+1}-M_n)}=o(\eta_n)$.
We thus can apply Lemma~\ref{lem:u_n} {\it (ii)} to $u_n = k_{n+1}$ and conclude that, asymptotically when $n$ tends to $+\infty$,
\[B_{n,k_{n+1}} = \Theta\left(\sum_{p=M_n-\delta_n}^{\min\{M_n+\eta_n,k_{n+1}\}} a_p^{(n)}\right).\]
Moreover, since the sequence $(\delta_{n})_{n\geq 1}$ verifies $\delta_{n} = o(M_{n})$ and $\frac{M_{n}\sqrt{\ln M_{n}}}{n} = o(\delta_{n})$, and since the sequence $(\eta_n)_{n\geq 1}$ verifies $\eta_n = o(M_n)$, $\lim_{n\to+\infty}\frac{\eta_n^2}{M_n} = +\infty$ and $\sqrt{M_n\ln (k_n-M_n)}=o(\eta_n)$, we have,
\[B_{n,k_n} = \Theta\left(\sum_{p=M_n-\delta_{n}}^{\min\{M_n+\eta_n,k_{n}\}} a_p^{(n)}\right),\]
which implies
\[B_{n+1,k_{n+1}} = \Theta\left(\sum_{p=M_{n+1}-\delta_{n+1}}^{\min\{M_{n+1}+\eta_{n+1},k_{n+1}\}} a_p^{(n+1)}\right).\]
Let us note that
\[(M_{n+1}-\delta_n) \sum_{p=M_{n+1}-\delta_{n+1}}^{\min\{M_{n+1}+\eta_{n+1},k_{n+1}\}} a_p^{(n)}
\leq B_{n+1,k_{n+1}}
\leq (M_{n+1}+\eta_{n+1}) \sum_{p=M_{n+1}-\delta_{n+1}}^{\min\{M_{n+1}+\eta_{n+1},k_{n+1}\}} a_p^{(n)}.\]
Since $k_{n+1}\geq M_{n+1} \geq M_n$, via similar arguments to those developed for the proof of Lemma~\ref{lem:u_n} {\it (ii)}, we get
\[\sum_{p=M_{n+1}-\delta_{n+1}}^{\min\{M_{n+1}+\eta_{n+1},k_{n+1}\}} a_p^{(n)}
\sim \sum_{p=M_{n+1}-\delta_{n+1}}^{\min\{M_{n}+\eta_{n},k_{n+1}\}} a_p^{(n)}.\]
We thus have to compare
\[S_n = \sum_{p=M_{n+1}-\delta_{n+1}}^{\min\{M_{n}+\eta_{n},k_{n+1}\}} a_p^{(n)}\]
and
\[T_n = \sum_{p=M_n-\delta_n}^{\min\{M_n+\eta_n,k_{n+1}\}} a_p^{(n)},\]
and to prove that those two sums are equivalent when $n$ tends to infinity. Decompose $S_n$ as follows:
\[S_n = T_n 
+ \sum_{p= \min\{M_n+\eta_n,k_{n+1}\}}^{\min\{M_{n+1}+\eta_{n+1},k_{n+1}\}} a_p^{(n)} 
- \sum_{p= M_n-\delta_n}^{M_{n+1}-\delta_{n+1}}a_p^{(n)}.\]
Arguments from the proof of Lemma~\ref{lem:u_n} {\it (ii)} imply that the second summand is negligible in front of the firts. 
Let us assume that the third term is non-zero, i.e. $M_{n+1}-\delta_{n+1} > M_n-\delta_n$ (note that if this term is zero then $S_n \sim T_n$ is already proven). Via Lemma~\ref{lem:croissance}, since $\frac{M_{n+1}}{M_n} = 1+o(\frac1{M_n})$, we have
\begin{align*}
\sum_{p= M_n-\delta_n}^{M_{n+1}-\delta_{n+1}}a_p^{(n)}
&\leq (M_{n+1}-\delta_{n+1}-M_n+\delta_n) a_{M_n-\delta_n}^{(n)}\\
&= (\delta_n-\delta_{n+1} + o(1)) a_{M_n-\delta_n}^{(n)}\\
&\leq (\delta_n + o(1))a_{M_n-\delta_n}^{(n)} = o(a_{M_n}^{(n)})
\end{align*}
in view of Lemma~\ref{lem:u_n} {\it (i)}.
Therefore, since $a_{M_n}^{(n)}\leq T_n$, we have $S_n \sim T_n$ when $n$ tends to $+\infty$, which implies
\[\frac1{M_{n+1}+\eta_{n+1}} \Theta(1)\leq \rat_{n+1} := \frac{B_{n,k_{n+1}}}{B_{n+1,k_{n+1}}} \leq \frac1{M_{n+1}-\delta_{n+1}} \Theta(1),\]
and since $\eta_n = o(M_n)$ and $\delta_n = o(M_n)$, we get
\[\rat_{n+1} = \Theta\left(\frac1{M_{n+1}}\right) = \Theta\left(\frac{\ln n}{n}\right).\]
\indent\hfill\qed
\end{proof}

\section{Probability of the class of $\true$}\label{sec:tauto}

%\nnote{reprendre les notations -> elles sont pas bien mes notations ? ;)}
\begin{proof}[of Proposition~\ref{prop:tauto}]
The proof is divided in two steps. The first one is dedicated to the
computation of the ratio $\mu_{n}(ST)$.
The second part of the proof shows that almost all tautologies are simple tautologies.\\
Let us consider the non-ambiguous pattern language $S = \bullet | S \lor S | \boxempty\land\; \boxempty$. It is subcritical for $\mathcal I$.
Remark that a tree such that two $S$-pattern leaves are labelled by a variable and its negation,
is a simple tautology. The generating function of $S$ is $s(x,y) = \frac12(1-\sqrt{1-4(x+y^2)})$. It is sub-critical for $\mathcal{I}$. 

The generating function $\tilde{I}(z) = \frac12\nicefrac{\partial^2}{\partial x^2}(s(xz,I(z))_{|x=1}$
enumerates \ao trees with two marked distinct leaves.
Therefore, $DC_n = 2^{n-1}\tilde{I}_nB_{n-1}$ is the number of simple tautologies
where we count twice simple tautologies realized simultaneously by two pairs of leaves.
We have
\[\frac{DC_n}{T_n} = \frac{2^{n-1}\tilde{I}_{n}B_{n-1,k_n}}{2^{n+1}I_{n}B_{n,k_n}},\]
and using a consequence of~\cite[Theorem VII.8]{FS09} (cf. a detailed proof in~\cite{GK12}):
\[\lim_{n\to\infty}\frac{\tilde{I}_{n}}{I_{n}} = \lim_{z\to\frac18}\frac{\tilde{I}'(z)}{I'(z)} = 3.\]
Thus, we get the upper bound $\frac34 \rat_n$ for the ratio of simple tautologies:
it remains to deal with the double-counting in order to compute a lower bound.

In $DC_n$, simple tautologies realized by a unique pair of leaves are counted once,
those that are realized by two pairs of leaves are counted twice, and so on.
Let us denote by $ST^i_n$ the number of simple tautologies counted at least $i$ times in $DC_n$: we have $DC_n = \sum_{i\geq 1} ST_n^{(i)}$.

Our aim is to substract to $DC_n$ the tautologies that have been overcounted. Therefore, we count simple tautologies realized by three $S$-pattern leaves labelled by $\alpha/\alpha/\bar{\alpha}$ where $\alpha$ is a literal, and the tautologies realized by four $S$-pattern leaves labelled by $\alpha/\bar{\alpha}/\beta/\bar{\beta}$ where $\alpha$ and $\beta$ are two literals. Let us denote by
\[I_3(z) = \frac1{3!}\frac{\partial^3}{\partial x^3} s(xz,I(z))_{|x=1}\]
the generating function of tree-structures in which three $S$-pattern leaves have been pointed and
\[I_4(z) = \frac1{4!}\frac{\partial^4}{\partial x^4} s(xz,I(z))_{|x=1}\]
the generating function of tree-structures in which four $S$-pattern leaves have been pointed.
Then, let
\[DC_n^{(3)} = {3\cdot 2^{n-2}B_{n-2,k_n}[z^n]I_3(z) },\]
and
\[DC_n^{(4)} = {6\cdot 2^{n-2}B_{n-2,k_n}[z^n]I_4(z) }.\]
The integer $DC_n^{(3)}$ (resp. $DC_n^{(4)}$) counts (possibly with multiplicity) the trees in which three (resp. four) $S$-pattern leaves have been pointed, one of them labelled by a literal and the two others by its negation (resp. two of them labelled by two literals associated to two different variables and the two others by their negations). 
%Le facteur $3$ vient du choix des deux feuilles ayant la même étiquette. Nous savons que ces trois feuilles sont étiquetées par la même variable. 
%L'étiquetage des feuilles de l'arbre est donc compté par $B_{n-2,k_n}2^{n-2}$.
Remark that a tree having six $S$-pattern leaves labelled by $\alpha/\alpha/\bar{\alpha}/\beta/\beta/\bar{\beta}$ is counted twice by $DC_n^{(3)}$ and once by $DC_n^{(4)}$.

For all integer $i$, a simple tautology counted at least $i$ times by $DC_n$ is counted at least $(i-1)$ times by $DC_n^{(3)}+DC_n^{(4)}$. Therefore,
\[ST_n \geq DC_n - (DC_n^{(3)} + DC_n^{(4)}).\]
In view of Lemma~\ref{lem:Jakub},
\[\frac{DC_n^{(3)}}{T_n} \leq c_3\cdot \frac{B_{n-2,k_n}}{B_{n,k_n}} = c_3\cdot \frac{B_{n-2,k_n}}{B_{n,k_n}} = \mathcal{O}(\rat_n^2)\]
and
\[\frac{DC_n^{(4)}}{T_n} \leq c_4\cdot \frac{B_{n-2,k_n}}{B_{n,k_n}} = c_4\cdot \frac{B_{n-2,k_n}}{B_{n,k_n}} = \mathcal{O}(\rat_n^2),\]
where $c_3$ and $c_4$ are positive constants.
Then, asymptotically when $n$ tends to infinity, $\mu_n(ST) = \mu_n(DC) + o\left(\rat_n\right) \sim \nicefrac34\cdot \rat_n$.

Let us now turn to the second part of the proof: asymptotically, almost all tautologies are simple tautologies.
Let us consider the pattern $N = \bullet | N \lor N | \boxempty \land N$.
This pattern is unambiguous, its generating function verifies $n(x,y) = x + n(x,y)^2 + y\cdot n(x,y)$
and is thus equal to $\frac12(1-y-\sqrt{(1-y)^2-4x})$. It implies that $N$ is sub-critical for the family $\mathcal{I}$ of tree-structures.

A tautology has at least one $N[N]$-repetition, otherwise,
 we can assign all its $N$-pattern leaves to false and,
 the whole tree computes false: impossible for a tautology. \\
Consider a tautology $t$ with exactly one $N[N]$-repetition. this repetition must be a $x|\bar{x}$
 repetition and must occur among the $N$-pattern leaves, using the same kind of argument than above.\\
Then, let us assume that there is an $\land$-node denoted by $\nu$ between the $N$-pattern
 leaf $x$ and the root of the tree. This node $\nu$ has a left subtree $t_1$ and a right subtree 
$t_2$. Assume that the leaf $x$ appears in $t_1$. Then, one can assign all the
 $N$-pattern leaves of $t_2$ (which are $N[N]$-pattern leaves of $t$) to false, since there
 is no more repetition among the $N[N]$-pattern leaves of $t$. Also assign all the pattern
leaves of $t$ minus the subtree rooted at $\nu$ to false. Then, we can see that $t$ 
computes false: impossible. We have thus shown that $t$ is a simple tautology.

Finally, tautologies with exactly one $N[N]$-repetition are simple tautologies,
 a tautology must have at least one $N[N]$-repetition and, thanks to Lemma~\ref{lem:Jakub},
 tautologies with more than one $N[N]$-repetitions have a ratio of order $o\left(\rat_n\right)$, 
which is negligible in front of the ratio of simple tautologies.
\hfill\qed
\end{proof}

\section{Probability of the class of projections}\label{sec:projections}

Studying the probability of $\true$ is essential to understand the model while studying the 
projections is not necessary. However,
it permits to be more familiar with the model and often permits to conjecture
the general behaviour of $\mathbb{P}_{n}\langle f\rangle$.
This gives a sufficient reason to deeply study $\mathbb{P}_n\langle x\rangle$ ($x$ is a literal).
We will not detail all the proofs that are very similar to those of Section~\ref{sec:proba}.

To calculate the probability of the class of projections
we will follow the ideas presented for tautologies: 
we define a set of trees of simple shape that compute the projection $x$
and call such trees ``simple-$x$'' and then show that the ratio of simple-$x$ is,
asymptotically when the size of the trees $n$ tends to infinity,
equal to the probability of the projection.

\begin{definition}[cf. Figure~\ref{fig:simplex}]
A {\bf simple-$x$ of type T} is a tree with one subtree reduced to a single leaf and the other subtree being a simple tautology if the root's label is $\land$ or a simple contradiction if the root's label is $\lor$. 

A {\bf simple-$x$ of type X} is a tree with one subtree reduced to a single leaf $\ell$, the root labelled by $\land$ (resp. $\lor$) and the other subtree such that there exists a leaf labelled by the same literal as $\ell$ linked to the root by a $\lor$-only path.

We denote by $\mc{X}$ the family of simple-$x$.
\end{definition}
Obviously, simple-$x$ are computing the projection $x$.

\begin{figure}
\centering
\begin{tabular}{|cc|}
\hline
\multicolumn{2}{|c|}{Simple $x$ of type tautology.} \\
\begin{tikzpicture}[level/.style={sibling distance=40mm/#1}, level distance=10mm]
\node [circle] (z){$\vee$}
child {node [circle] (a){$x$}}
child {node [circle] (f) {$SC$}}
;
\end{tikzpicture}
&
\begin{tikzpicture}[level/.style={sibling distance=40mm/#1}, level distance=10mm]
\node [circle] (z){$\wedge$}
child {node [circle] (a){$x$}}
child {node [circle] (f) {$ST$}}
;
\end{tikzpicture}\\
\hline
\multicolumn{2}{|c|}{Simple $x$ of type $x$.}\\
\begin{tikzpicture}[level/.style={sibling distance=40mm/#1}, level distance=10mm]
\node [circle] (z){$\vee$}
child {node [circle] (a){$x$}}
child {node [circle] (f) {$\wedge$}
	child{node [circle] (g) {$\bigtriangleup$} }
	child{node [circle] (h) {$\wedge$}
		child{node [circle] (i) {$x$} }
		child{node [circle] (l) {$\bigtriangleup$} }
		}
	}
;
\end{tikzpicture}
&
\begin{tikzpicture}[level/.style={sibling distance=40mm/#1}, level distance=10mm]
\node [circle] (z){$\wedge$}
child {node [circle] (a){$x$}}
child {node [circle] (f) {$\vee$}
	child{node [circle] (g) {$\bigtriangleup$} }
	child{node [circle] (h) {$\vee$}
		child{node [circle] (i) {$x$} }
		child{node [circle] (l) {$\bigtriangleup$} }
		}
	}
;
\end{tikzpicture}\\
\hline
\end{tabular}
\caption{Examples of simple-$x$.}
\label{fig:simplex}
\end{figure}

\begin{lemma}
If $X^T_n$ is the number of type T simple-$x$ of size $n$, we have, when $n$ tends to infinity:
\[\lim_{n\to+\infty}\frac{X^T_n}{T_n} \sim \frac{3}{8}\rat_n.\]
\end{lemma}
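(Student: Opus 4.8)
The plan is to mimic the structure used for simple tautologies in Proposition~\ref{prop:tauto}, since a type-T simple-$x$ is essentially a root with one child a bare leaf and the other child a simple tautology (if the root is $\land$) or a simple contradiction (if the root is $\lor$). First I would set up a pattern language capturing the ``backbone'' of a type-T simple-$x$. Concretely, the relevant structure is: a root labelled $\land$ or $\lor$, one of whose subtrees is a single pattern leaf and the other of which is a tree from the simple-tautology (resp. simple-contradiction) family. Using the pattern $S = \bullet \mid S\lor S \mid \boxempty\land\boxempty$ already introduced (which is subcritical for $\I$), a simple tautology is realised by two $S$-pattern leaves carrying a variable and its negation; a simple contradiction is the dual with $\land$ and $\lor$ swapped. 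So the backbone has three distinguished leaves: the isolated leaf $\ell$ (which plays no labelling role) and the two $S$-pattern leaves $\alpha,\bar\alpha$ of the embedded simple (contra)tautology.

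The counting then proceeds exactly as in the proof of Proposition~\ref{prop:tauto}. I would introduce a generating function $\hat I(z)$ enumerating tree-structures with the type-T backbone marked — namely one marked leaf for the isolated $\ell$, and two marked leaves for the $\alpha/\bar\alpha$ pair inside the simple tautology or contradiction. By the same differentiation trick, $\hat I(z)$ is obtained from derivatives of $s(xz,I(z))$ composed appropriately; the key point is that, by subcriticality of $S$ for $\I$, the function $\hat I(z)$ has the same singularity $\tfrac18$ as $I(z)$, so $\lim_{n\to\infty}\hat I_n/I_n$ is a finite constant computable by the singularity-ratio formula (a consequence of~\cite[Theorem VII.8]{FS09}, as already used). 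Combining with the labelling count: once the backbone is fixed we have $2^{n-1}$ polarities (the $\alpha/\bar\alpha$ pair contributes a single free choice after fixing $\ell$'s polarity, via the same $2^{-1}$ correction as for simple tautologies) and a factor $B_{n-2,k_n}$ for assigning variables to the remaining leaves with the two backbone leaves forced equal. This yields a leading term of the form $\text{const}\cdot\rat_n$ for $X^T_n/T_n$, with the constant coming out to $\tfrac38$ after plugging in the value of $\lim \hat I_n/I_n$ — the factor $\tfrac38 = \tfrac12\cdot\tfrac34$ being the product of the ``isolated leaf'' factor and the simple-tautology constant $\tfrac34$, which is the expected shape given that a type-T simple-$x$ is ``a leaf $\lor/\land$ a simple (contra)tautology''.

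Finally, as with tautologies, the naive generating-function count double-counts: a tree may be a type-T simple-$x$ simultaneously via several choices of backbone (several candidate $\land$/$\lor$ roots, or the embedded simple tautology realised by several $\alpha/\bar\alpha$ pairs). I would control this exactly as in Proposition~\ref{prop:tauto}: the overcount is governed by trees with an extra repetition or an extra distinguished leaf, hence by Lemma~\ref{lem:Jakub} contributes only $\O(\rat_n^2) = o(\rat_n)$, so $X^T_n/T_n \sim \tfrac38\rat_n$. The main obstacle is bookkeeping the double-counting cleanly: because the backbone of a type-T simple-$x$ involves both an isolated leaf next to the root and an embedded simple tautology, there are slightly more sources of ambiguity than in the pure-tautology case (e.g. a tree where the isolated leaf itself also closes a tautology elsewhere), and one must check that every such coincidence forces at least one additional $S$-repetition or $S$-pattern leaf so that Lemma~\ref{lem:Jakub} applies — but this is a matter of careful case analysis rather than new ideas.
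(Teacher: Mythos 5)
Your overall strategy is exactly the paper's: decompose a type-T simple-$x$ as a root connective with one child a bare leaf and the other child a simple tautology or contradiction, count the latter by differentiating $s(xz,I(z))$ to mark the $\alpha/\bar\alpha$ pair, use sub-criticality of $S$ for $\I$ to transfer the coefficient ratio to a constant, and dismiss the double-counting via Lemma~\ref{lem:Jakub} as $\mathcal{O}(\rat_n^2)=o(\rat_n)$. Your factorisation of the constant as $\frac38=\frac12\cdot\frac34$ (probability $\frac12$ that the root has a leaf child, times the simple-tautology constant $\frac34$ of Proposition~\ref{prop:tauto}) is also the right way to see where the answer comes from.

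There is, however, one step that would fail as written: the labelling factor. Forcing the two $S$-pattern leaves $\alpha,\bar\alpha$ of the embedded simple tautology to carry the same variable merges exactly two leaves into one class slot, so the variable-assignment factor is $B_{n-1,k_n}$, not $B_{n-2,k_n}$; the isolated leaf $\ell$ carries no constraint at all (the subtree computes $\true$ or $\false$ whatever $\ell$ is, and $x\land\true=x$), so it does not cost a second identification. With $B_{n-2,k_n}$ your ratio would come out $\Theta\bigl(\rat_n^2\bigr)=o(\rat_n)$, contradicting both the statement and your own heuristic $\frac12\cdot\frac34$. The paper indeed writes the numerator as $4\cdot 2^{n-1}B_{n-1,k_n}\,[z^{n-1}]\tfrac{\partial^2}{2\partial x^2}s(zx,I(z))_{|x=1}$: a factor $4$ for the root label and the side of the bare leaf, $2^{n-1}$ for polarities with the $\alpha/\bar\alpha$ pair forced opposite, $B_{n-1,k_n}$ for the variable classes, and the marked tree-structure count $\tilde I_{n-1}$ for the size-$(n-1)$ subtree; the constant then comes from $\tilde I_{n-1}/I_n\to 3\cdot\frac18$. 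Apart from this slip — and from the fact that you assert rather than compute $\lim \hat I_n/I_n$, which is the only place the numerical value $\frac38$ can actually be extracted — the argument is the one in the paper.
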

\begin{proof}
We have:
\[\frac{X^T_n}{T_{n}} \sim \frac{4\cdot 2^{n-1}B_{n-1,k_n}[z^{n-1}]\frac{\partial^2}{2\partial x^2}s(zx,I(z))_{|x=1}}{T_n}\]
because a type T simple-$x$ of size $n$ is either a tree rooted by $\land$ or a tree rooted by $\lor$ (which gives a factor 2), with either its right or its left subtree being a single leaf (which also gives a factor 2), and the other subtree being a simple tautology or a simple contradiction (depending on the root's label) of size $n-1$. 
Remark that this equation is only true asymptotically when $n$ tends to infinity, since we do double-counting which becomes negligible when $n$ tends to infinity. 
Thus, asymptotically when $n$ tends to infinity,
\[\frac{X^T_n}{T_{n,k_n}} \sim \frac{4\cdot 2^{n-1}B_{n-1,k_n}}{2^n B_{n,k_n}}\frac{[z^{n-1}]\frac{\partial^2}{2\partial x^2}s(zx,I(z))_{|x=1}}{I_n}
= \frac{2\cdot 2^{n-1}B_{n-1,k_n}}{2^n B_{n,k_n}}\frac{\tilde{I}_{n-1}}{I_n}.\]
We already have proved: $\nicefrac{\tilde{I}_n}{I_n} \sim 3$, and $\nicefrac{I_{n-1}}{I_n} = \nicefrac18$, so the result is proved.
\indent \hfill\qed
\end{proof}

\begin{lemma}
If $X^X_n$ is the number of type X simple-$x$ of size $n$, we have, asymptotically  when $n$ tends to infinity,
\[\lim_{n\to+\infty}\frac{X_n^X}{T_n} \sim \frac{\rat_n}4.\]
\end{lemma}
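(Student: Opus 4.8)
The plan is to follow the exact template used for the type-T computation in the previous lemma. A type-X simple-$x$ of size $n$ decomposes as a root labelled by $\land$ or $\lor$, one of whose subtrees is a single leaf $\ell$ carrying some literal $\alpha$, while the other subtree of size $n-1$ contains a leaf also labelled $\alpha$ reachable from its own root by a path of connectives all equal to $\lor$ (if the root is $\land$) or all equal to $\land$ (if the root is $\lor$). By the duality between the two connectives and between positive and negative literals, the two cases ($\land$-root versus $\lor$-root) contribute equally, giving a factor $2$; and the choice of which of the two subtrees is the single leaf gives another factor $2$.

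The first step is to describe the ``other subtree'' by an unambiguous subcritical pattern. Reusing the pattern $N = \bullet \mid N\lor N \mid \boxempty\land N$ from Appendix~\ref{sec:tauto} (or its obvious mirror, depending on the root's connective), a tree with a marked $N$-pattern leaf is counted by $z\,\frac{\partial}{\partial x}n(x,y)$ evaluated at $(x,y)=(z,I(z))$, so the number of underlying tree-structures of size $n-1$ with one pointed $N$-pattern leaf is $[z^{n-1}]\,z\frac{\partial}{\partial x}n(z,I(z))$. Exactly as in the type-T case, I would then write
\[
\frac{X^X_n}{T_n} \sim \frac{2\cdot 2\cdot 2^{n-2} B_{n-2,k_n}\,[z^{n-1}]\,z\frac{\partial}{\partial x}n(z,I(z))}{2^{n+1} I_n B_{n,k_n}},
\]
where the factor $2^{n-2}$ is the polarity of the leaves outside the forced pair, and $B_{n-2,k_n}$ assigns variables to the variable-classes once the forced repeated pair $\alpha/\alpha$ has been removed (the pair $\ell$ and its partner share a variable, accounting for the drop from $B_{n-1}$ to $B_{n-2}$ and hence for the single power of $\rat_n$). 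Collecting the powers of $2$ and using $B_{n-2,k_n}/B_{n,k_n} = \rat_{n-1}\rat_n = \Theta(\rat_n^2)$ would at first sight give order $\rat_n^2$, so the bookkeeping of exactly which variable-labellings survive must be done carefully: in fact only \emph{one} new variable-class is created (the leaf $\ell$ is free, the partner is forced to match it), so the correct denominator-shift is a single $B_{n-1,k_n}$, i.e. one factor $\rat_n$, matching the claimed $\rat_n/4$.

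The main obstacle, just as for Proposition~\ref{prop:tauto}, is the double-counting: a tree may be a type-X simple-$x$ via several different choices of the witnessing pair $(\ell,\text{partner})$ or of the $\lor$-only path, and the naive pointed-leaf count $[z^{n-1}]z\frac{\partial}{\partial x}n(z,I(z))$ overcounts such trees. The remedy is the same inclusion–exclusion device: bound the overcount by pointing \emph{two} extra pattern leaves (a configuration with two repetitions), invoke Lemma~\ref{lem:Jakub} to see that trees with at least two $N$-repetitions have ratio $\mathcal{O}(\rat_n^2) = o(\rat_n)$, and conclude that the corrected count is asymptotically equal to the leading term. Finally I would compute the constant: $\lim_{n\to\infty}\frac{[z^{n-1}]z\frac{\partial}{\partial x}n(z,I(z))}{I_n}$ is obtained from the singular expansions of $n(x,y)$ and $I(z)$ near $z=\nicefrac18$ exactly as the value $3$ was obtained for $\tilde I_n/I_n$ in Appendix~\ref{sec:tauto}, and combined with $I_{n-1}/I_n = \nicefrac18$ and the powers of $2$ it yields the factor $\nicefrac14$, so that $X^X_n/T_n \sim \rat_n/4$.
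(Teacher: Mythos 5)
Your overall strategy is the one the paper uses (factor $2\times 2$ for the root's label and for which side carries the single leaf $\ell$, point the partner leaf through an unambiguous subcritical pattern, absorb the double counting via Lemma~\ref{lem:Jakub}, read the constant off the singular expansions), but you have picked the wrong pattern, and this is not a cosmetic slip: it changes the constant and the set of trees being counted. The partner of $\ell$ must be joined to the root of the second subtree by a $\lor$-only path; the leaves with that property are exactly the pattern leaves of $S=\bullet\mid S\lor S\mid\boxempty\land\boxempty$, which is the pattern the paper uses here, and \emph{not} those of $N=\bullet\mid N\lor N\mid\boxempty\land N$, whose pattern leaves may sit below arbitrarily many $\land$-nodes. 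Quantitatively, using $I=z+2I^2$ one finds $z\,\partial_xs(x,I(z))|_{x=z}=\frac{2z}{1+\sqrt{1-8z}}=I(z)$ identically, so the mean number of $S$-pattern leaves tends to $1$, $[z^{n-1}]z\,s_x(z,I(z))/I_n\to \frac18$, and the product $4\cdot\frac12\rat_n\cdot\frac18$ gives the claimed $\rat_n/4$. With $N$ instead, $z\,\partial_xn(x,I(z))|_{x=z}=\frac{4z}{1+3\sqrt{1-8z}}=\frac12-\frac32\sqrt{1-8z}+\cdots$, so the mean number of $N$-pattern leaves tends to $6$ and your computation would output $4\cdot\frac12\rat_n\cdot\frac68=\frac32\rat_n$, not $\rat_n/4$. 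Worse, a marked $N$-pattern leaf lying below an $\land$-node does not yield a type-X simple-$x$ at all (the resulting tree need not compute the projection), so you would be estimating the ratio of a strictly larger family.

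A secondary problem is the labelling bookkeeping. Your displayed formula carries $2^{n-2}B_{n-2,k_n}$ in the numerator against $2^{n+1}$ in the denominator, which, as you yourself observe, gives order $\rat_n^2$; the prose correction that follows identifies the right exponent but never repairs the formula or the powers of $2$. The clean statement is: since $\ell$ and its partner are forced to carry the same literal, the $n$ leaves form $n-1$ independent positions, contributing $2^{n-1}B_{n-1,k_n}$ labellings against $2^nB_{n,k_n}$ for $T_n$, i.e. exactly $\frac12\rat_n$, and this is the single factor $\rat_n$ in the answer. Your treatment of the overcounting (several admissible partners) via Lemma~\ref{lem:Jakub} is correct and is what the paper does.
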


\begin{proof}
We have:
\[\frac{X^X_n}{T_n} \sim \frac{4\cdot 2^{n-1}B_{n-1,k_n}[z^{n-1}]\frac{\partial}{\partial x}s(zx,I(z))_{|x=1}}{2^nB_{n,k_n}I_n}\]
because a type T simple-$x$ of size $n$ is either a tree rooted by $\land$ or a tree rooted by $\lor$ (which gives a factor 2), with either its right or its left subtree being a single leaf (which also gives a factor 2), and because the other subtree is a tree where we have chosen one $S$ pattern leaf and labelled it by the same labelled as the first level leaf. Since there can be several $S$ pattern leaves that can have simultaneously the same label as the leaf subtree, we do double counting, but once again, thanks to Lemma~\ref{lem:Jakub}, this double counting becomes negligible when $n$ tends to infinity. Thus,
\[\frac{X^X_n}{T_{n}} \sim \frac{4\cdot 2^{n-1}B_{n-1,k_n}}{2^n B_{n,k_n}}\frac18.\]
Since $\nicefrac{[z^{n-1}]\frac{\partial}{\partial x}s(zx,I(z))_{|x=1}}{I_n} \sim 1$ and $\nicefrac{I_{n-1}}{I_n}\sim\nicefrac18$, we get the result.
\indent \hfill\qed
\end{proof}

\begin{lemma}
Asymptotically when $n$ tends to infinity, the ratio of simple-$x$ is equal to the probability of the projection.
\end{lemma}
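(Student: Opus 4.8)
The plan is to mimic exactly the two-step strategy already carried out for tautologies in Proposition~\ref{prop:tauto}: first show that the ratio of simple-$x$ trees is asymptotically $\nicefrac58\cdot\rat_n$, and then show that almost every tree computing the projection $x$ is in fact a simple-$x$, so the probability $\IP_n\langle x\rangle$ coincides asymptotically with $\mu_n(\mathcal X)$. The first half is essentially already done: the two preceding lemmas give $\mu_n(X^T)\sim\nicefrac38\cdot\rat_n$ and $\mu_n(X^X)\sim\nicefrac14\cdot\rat_n$, and since a simple-$x$ of type T and a simple-$x$ of type X are structurally incompatible up to a negligible overlap (a type-T tree has a whole simple tautology or contradiction hanging off the root, while a type-X tree only has a single $\lor$-path repetition of the root literal), one gets $\mu_n(\mathcal X)\sim\nicefrac38\cdot\rat_n+\nicefrac14\cdot\rat_n=\nicefrac58\cdot\rat_n$. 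One must check that trees counted in both families, or counted with multiplicity inside one family, contribute only $O(\rat_n^2)$; this follows from Lemma~\ref{lem:Jakub} exactly as in the double-counting arguments of Appendix~\ref{sec:tauto}.

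For the second half I would introduce the unambiguous pattern language $N=\bullet\mid N\lor N\mid \boxempty\land N$ (already used for tautologies), together with its mirror $\overline N=\bullet\mid \overline N\land\overline N\mid\boxempty\lor\overline N$, and analyse a tree $t$ computing the projection $x$ via its $N[N]$- and $\overline N[\overline N]$-structure. The key combinatorial observation is: if $t$ computes $x$, then $t$ depends on the single essential variable $x$ and has complexity $1$, so $R\langle x\rangle=0$; arguing as in the tautology case, one shows $t$ must contain at least one ``essential repetition'' (either a $y/\bar y$ pair forced to be a tautology/contradiction in a subtree, or a repeated occurrence of the literal $x$ itself) — otherwise one could assign the pattern leaves to kill the dependence on $x$ or force a constant, contradicting that $t\equiv x$. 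Then I would show that a tree with exactly one such repetition is, up to the duality between $\land/\lor$ and positive/negative literals, precisely a simple-$x$ of type T or type X: the single repetition either sits in a subtree that an $\land$- or $\lor$-path isolates from the root leaf (type T), or it is a second copy of $x$ reachable from the root by a monotone path (type X), and any other placement of the repetition lets one assign the remaining pattern leaves so that $t$ computes a constant or a different function.

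Finally, Lemma~\ref{lem:Jakub} applied to the pattern $N$ (sub-critical for $\I$) shows that trees computing $x$ with at least two such repetitions have ratio $O(\rat_n^2)=o(\rat_n)$, hence are negligible in front of $\mu_n(\mathcal X)\sim\nicefrac58\cdot\rat_n$. Combining: $\IP_n\langle x\rangle = \mu_n(\mathcal X)+o(\rat_n)\sim\nicefrac58\cdot\rat_n$, which is the claim. The main obstacle is the case analysis in the second half — proving that having exactly one repetition \emph{forces} the simple-$x$ shape. Unlike tautologies, where the target function is the constant $\true$, here the tree must compute a specific non-constant function, so one has to rule out not only that $t$ becomes constant under a bad assignment but also that it collapses to the wrong projection or to a more complex function; handling the interaction between the $N$-pattern leaves carrying $x$ and those carrying other variables is the delicate point, and this is why the authors say the proof is ``very similar'' to Section~\ref{sec:proba} but defer the details.
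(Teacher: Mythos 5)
Your proposal is correct and takes essentially the route the paper intends: its own ``proof'' of this lemma is a one-line deferral to the argument of Proposition~\ref{prop:tauto}, and your expansion of that argument (sum the type-T and type-X ratios to get $\nicefrac58\cdot\rat_n$ with negligible overlap, then use the $N$-pattern and its dual to show a tree computing the projection needs at least one repetition, that exactly one repetition forces the simple-$x$ shape, and that two or more repetitions cost $O(\rat_n^2)$ by Lemma~\ref{lem:Jakub}) is precisely the structure of the tautology proof adapted as the authors indicate. The only cosmetic point is that your ``essential repetition'' dichotomy is, in the equivalence-class model, just an ordinary $N$-repetition in both cases (a second occurrence of $x$ is itself a repetition), so Lemma~\ref{lem:Jakub} applies directly without needing the restriction-counting refinement of Appendix~\ref{sec:general_class}.
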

Proving this lemma is very similar to proving that almost all tautology is simple (cf. proof of Proposition~\ref{prop:tauto}).

\section{Probability of a general class of Boolean functions}\label{sec:general_class}

In the following, $\langle f\rangle $ is fixed, $f$ is one of its representatives and $\Gamma_f$ is the set of essential variables of $f$.
$T$ is an \ao tree computing $f$. Moreover, we will need to consider the patterns
$R = N^{(r+1)}[N\oplus P]$ and $\bar{R} = N^{(r+1)}[(N\oplus P)^2]$.
Note that the language $N\oplus P$ is defined such that the $N\oplus P$-pattern
leaves of a tree are its $N$-pattern leaves plus its $P$ pattern leaves.
It is proved in~\cite{kozik08} that this pattern language is indeed
non-ambiguous and sub-critical for $I$ if $N$ and $P$ are.
\begin{proposition}
A tree $t$ computing $f$ with at least one leaf on the $(r+2)^{\text{th}}$ level of the $R$
pattern must have at least $R(f)+1$ $(R,\Gamma_f)$-restrictions.
\end{proposition}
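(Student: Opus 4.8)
The plan is to analyze how a leaf on the $(r+2)^{\text{th}}$ level of the $R$ pattern interacts with the logical structure of $t$. Recall that $R = N^{(r+1)}[N\oplus P]$, so a tree in $R[\T]$ is built by plugging $(N\oplus P)[\T]$-trees into the placeholders of an $(r+1)$-fold composition of the pattern $N = \bullet\,|\,N\lor N\,|\,\boxempty\land N$. The $N$ pattern is designed so that one can simultaneously set all its pattern leaves to $\false$ and kill the whole subtree, unless the pattern leaves themselves contain a "local" contradiction. The key observation I would exploit is that a leaf sitting \emph{deep} in the composition --- on the $(r+2)^{\text{th}}$ level, i.e. strictly below all $r+1$ composed copies of $N$ --- lies inside one of the $(N\oplus P)[\T]$-blocks; to force the computed function to be $f$ (which is nonconstant as soon as $E(f)\geq 1$, and handled trivially otherwise), the pattern leaves at the various levels must encode enough constraints to reconstruct a minimal tree of $\langle f\rangle$, and each repetition or $\Gamma_f$-labelled leaf needed for this contributes to the count of $(R,\Gamma_f)$-restrictions.

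First I would make precise, level by level, the following assignment argument, which is the analogue of the tautology/simple-tautology argument in Appendix~\ref{sec:tauto}: if $t$ had at most $R(f)$ $(R,\Gamma_f)$-restrictions, then I claim one can choose a Boolean assignment of the $R$-pattern leaves --- exploiting the fact that at each $\land$-node of the $N$-structure one subtree can be zeroed out, and at each $\lor$-node one can push the evaluation into one branch --- so that the evaluation of $t$ depends only on the subtrees plugged in at a \emph{strict subset} of the placeholders, effectively producing a tree of smaller size than a minimal tree of $\langle f\rangle$ yet still computing a function in $\langle f\rangle$, contradicting minimality (here $R(f) = L(f) - E(f)$ counts exactly the repetitions available in a minimal tree, and $E(f)$ of the essential variables must come from $\Gamma_f$, accounting for the "$+1$" once the deep leaf is taken into account). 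The deep leaf on level $r+2$ is precisely what supplies the extra $+1$: it forces at least one additional block to be "used", which cannot be paid for by the $R(f)$ restrictions already budgeted.

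The key steps, in order, are: (1) unwind the definition of $R = N^{(r+1)}[N\oplus P]$ and locate the hypothesized level-$(r+2)$ leaf inside a specific $(N\oplus P)[\T]$-block, observing that it is genuinely below all $r+1$ copies of $N$; (2) set up the zeroing/projection assignment on the $N$-structures and show that, absent sufficiently many restrictions, the evaluation of $t$ collapses onto a proper subset of the plugged-in subtrees; (3) invoke minimality of $L\langle f\rangle$ and the identity $R\langle f\rangle = L\langle f\rangle - E\langle f\rangle$ to convert "too few restrictions" into a tree strictly smaller than a minimal tree still computing a function equivalent to $f$ --- a contradiction; (4) bookkeep carefully that $E\langle f\rangle$ of the required constraints are $\Gamma_f$-labellings (hence $(R,\Gamma_f)$-restrictions by definition) and that the deep leaf adds one more, yielding the bound $R\langle f\rangle + 1$. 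The main obstacle I expect is step (2): making the collapsing assignment fully rigorous across an $(r+1)$-fold composition, tracking simultaneously the placeholder subtrees, the $N$-pattern leaves at each level, and the polarities, so that no hidden repetition or essential-variable occurrence is double-counted or overlooked --- this is exactly the bookkeeping that makes the $+1$ appear rather than a weaker or stronger constant, and it is where the $N\oplus P$ (as opposed to plain $N$) pattern structure must be used to absorb the leaf subtrees correctly.
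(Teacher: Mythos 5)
Your overall strategy --- an assignment/collapsing argument on the nested $N$-patterns combined with the minimality of $L\langle f\rangle$ --- is the same one the paper uses, but the proposal stops exactly where the proof actually lives. You explicitly defer step (2), the ``collapsing assignment across the $(r+1)$-fold composition,'' and that step is not a routine bookkeeping exercise: it requires a specific pivot that you do not supply. The paper's argument proceeds by contradiction and introduces the \emph{first level $i$ at which the count of $(N^{(i)},\Gamma_f)$-restrictions fails to increase}. The base case (level $1$ must contain a repetition or an essential variable, else every $N$-pattern leaf can be set to $\false$ and $t$ computes the constant $\false$) forces $i\leq r+1$. One then splits on the total number of restrictions accumulated by level $i$: if it is too small, collapsing the placeholders of level $i-1$ to $\false$ and killing the inessential variables leaves a tree computing $f$ with fewer than $L(f)$ leaves; if the budget is exactly met, then the placeholders at level $r+2$ carry no restriction at all, so they (together with the hypothesized deep leaf) can be replaced by wildcards, and the ensuing simplification necessarily deletes either a non-repeated essential leaf (so the simplified tree no longer depends on an essential variable) or a repetition (so it computes $f$ with fewer than $L(f)$ leaves). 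Without this ``first stalling level'' device and the two-case dichotomy, the claim that the evaluation ``collapses onto a proper subset of the plugged-in subtrees'' is an assertion, not a proof.

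There is also a concrete error in your accounting of the bound. You write that $E\langle f\rangle$ of the constraints are $\Gamma_f$-labellings and that this, plus the deep leaf, ``accounts for the $+1$.'' That arithmetic does not produce $R\langle f\rangle+1$: a count of the form ($E(f)$ essential-variable restrictions) $+$ (repetitions) $+1$ would give $L(f)+1$, not $R(f)+1$, and in any case the lower bound does not arise by adding up the restrictions visible in a minimal tree. It arises because the restriction count must strictly increase level by level until it stalls, and the deep leaf at level $r+2$ supplies one collapse too many for a tree that has only $R(f)$ restrictions to absorb. As written, your step (4) would not close the argument even if step (2) were made rigorous.
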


\begin{proof}
Let us assume that $t$ computes $f$, has at least one leaf on the $(r+2)^{\text{th}}$ level of the $R$ pattern but have less than $R(f)$ $R$-repetitions. Let i be the smallest integer (smaller than $r+2$) such that the number of $(N^{(i)},\Gamma_f)$-restrictions is equal to the number of $(N^{(i-1)},\Gamma_f)$-restrictions.

There must be either a repetition or an essential variable in the first level: if there is none, then we can assign all the $N$ pattern leaves to $\false$ and this operation does not changes the calculated function. The calculated function is then the constant function $\false$, which is impossible;
so $i\leq r+1$.

\paragraph*{First Case:}
Let us assume that there are strictly less than $r$ $(N^{(i)},\Gamma_f)$-restrictions.
There is no repetition and no essential variable in the pattern leaves at level $i$. Therefore, we can assign them all to $\false$ and make the placeholders of the level $i-1$ compute $\false$. Let us replace those placeholders by $\false$ in the tree. Furthermore, replace by $\false$ all the non-essential remaining variables. And simplify the obtained tree to simplify all the constant leaves $\false$ and $\true$. We obtain a tree $t^{\star}$, which still computes $f$, and whose leaves are all former $N^{(i-1)}$ pattern leaves of $t$ labelled by essential variables. The tree $t^{\star}$ therefore contains strictly less than $r$ leaves, which is impossible since the complexity of $f$ is $r$.

\paragraph*{Second Case:}
Let us assume that $t$ has exactly $r$ $(N^{(i)},\Gamma_f)$-restrictions.
Since $i\leq r+1$, there is no restriction in the placeholders of the level $r+2$. Therefore, we can replace the placeholders by wildcards $\star$, which means that those wildcards can be evaluated to $\true$ or $\false$ independently from each other and without changing the function computed by $t$. We can also replace the remaining leaves labelled by non-essential and non-repeated variables by such wildcards. 

We simplify those wildcards. Such a simplification has to delete at least one non-wildcard leaf. If we deleted a non-repeated essential variable, then the tree $t^{\star}$ does not depend on this essential variable and computes $f$: this is impossible. Thus, we deleted a repetition: $t^{\star}$ has strictly less than $R(f)$ repetitions and computes $f$. It is impossible.
\hfill\qed
\end{proof}

Remark that in Lemma~\ref{lem:Jakub}, we only count repetitions and not restrictions as it was done in the original Lemma by Kozik. Because in terms of equivalence classes, essential variables are no longer relevant. Though, we will need to consider essential variables and the following lemma permits to handle them.

\begin{lemma}
\label{lem:restrictions}
Let $L$ be an unambiguous pattern, sub-critical for $\mc{I}$. Let $f$ be a fixed Boolean functions, $\Gamma_f$ the set of its  and $\mc{M}_f$ the set of minimal trees computing $f$. Let $\mc{E}$ be the family of trees obtained by expanding once a tree of $\mc{M}_f$ by trees having exactly $p$ $(L,\Gamma_f)$-restrictions. Then,
\[\mu_{n}(\mc{E}) \sim \alpha\cdot \rat_n^{R(f)+p},\]
with $\alpha>0$ a constant.
\end{lemma}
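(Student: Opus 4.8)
The plan is to follow the template already used for simple tautologies (Proposition~\ref{prop:tauto}) and for the two families of simple-$x$ trees (Appendix~\ref{sec:projections}): isolate a \emph{structural} generating-function factor and a \emph{labelling} factor, the latter being a quotient of two $B_{\bullet,k_n}$'s that Lemmas~\ref{lem:u_n}, \ref{lem:kpetit} and~\ref{lem:kgrand} evaluate. First I would write $\mc{E}_n$ as a finite sum: over the finitely many minimal trees $m\in\mc{M}_f$, over the finitely many admissible expansion sites of $m$, and over the plug-in trees $U\in L[\T]$ of the appropriate size having exactly $p$ $(L,\Gamma_f)$-restrictions, of the number of equivalence-class labellings of the resulting tree that are consistent with ``the tree computes a function of $\langle f\rangle$''. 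Since $\mc{M}_f$ and the set of sites are finite, it is enough to produce matching upper and lower bounds for a single summand, so I fix $m$ and the site and count the admissible pairs $(U,\text{labelling})$.

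Next I would separate structure from labelling. The tree-structure of an element of $\mc{E}$ is a fixed $m$-skeleton with a tree-structure of $L[\I]$ grafted at the chosen site; marking which $p$ of the $L$-pattern leaves carry the restrictions multiplies the bivariate generating function by $\frac{z^p}{p!}\,\partial_x^{\,p}\ell(z,I(z))$, which by sub-criticality of $L$ for $\I$ has the same square-root singularity $1/8$ as $I(z)$, so the number of relevant structures of size $n$ is $\sim c\cdot I_n$ (the consequence of \cite[Theorem VII.8]{FS09} already used in Proposition~\ref{prop:tauto}). For the labelling, $m$ forces a partition of its $L(f)$ leaves into $E(f)$ classes, i.e.\ $R(f)=L(f)-E(f)$ forced coincidences, and the $p$ $(L,\Gamma_f)$-restrictions force $p$ more; the remaining $\approx n-L(f)-p$ leaves are free and each may land either in one of the $E(f)$ classes already created by $m$ or in a fresh class. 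The classical Bell-number identity $\mathrm{Bell}(m+1)=\sum_j\binom mj\mathrm{Bell}(j)$, iterated $E(f)$ times and carrying along the cutoff $k_n$ and the per-class polarity correction $2^{-q}$, then shows that the number of consistent labellings is $\asymp 2^{\,n-R(f)-p}\,B_{\,n-R(f)-p,\;k_n}$: the $E(f)$ essential classes act like $E(f)$ extra ``elements'' in the partition count and cancel $E(f)$ of the leaves removed by $m$, so the effective shift of the first index of $B$ is $R(f)+p$ and the cutoff stays at $k_n$ — exactly matching the computations $X^T_n\sim 4\cdot 2^{n-1}B_{n-1,k_n}[z^{n-1}](\dots)$ and $X^X_n\sim 4\cdot 2^{n-1}B_{n-1,k_n}[z^{n-1}](\dots)$ of Appendix~\ref{sec:projections}, which are the case $R(f)=0$, $p=1$.

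Combining the two factors with $T_n\sim 2^nI_nB_{n,k_n}$, the $I_n$'s cancel up to the sub-criticality constant, giving $\mu_n(\mc{E})\sim\alpha'\cdot B_{\,n-R(f)-p,\,k_n}/B_{\,n,\,k_n}$. Writing this quotient as a product of $R(f)+p$ consecutive ratios $B_{\,m-1,\,k_n}/B_{\,m,\,k_n}$ and using $a_p^{(m-1)}=a_p^{(m)}/p$ together with Lemma~\ref{lem:u_n} (which locates $B_{\,m,\,k_n}$ as a short block of terms $a_p^{(m)}$ with $p\asymp k_n$ when $k_n\le M_n$, resp.\ $p\asymp M_n$ when $k_n\ge M_n$), one obtains, along the lines of the proofs of Lemmas~\ref{lem:kpetit} and~\ref{lem:kgrand}, that $B_{\,n-r,\,k_n}/B_{\,n,\,k_n}\sim c_r\,\rat_n^{\,r}$ for every fixed $r$; hence $\mu_n(\mc{E})\sim\alpha\,\rat_n^{\,R(f)+p}$ with $\alpha>0$. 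The hard part will be the labelling count: one must argue carefully that a function-preserving expansion by a tree with exactly $p$ $(L,\Gamma_f)$-restrictions amounts, up to a bounded multiplicative factor, to freezing exactly $R(f)+p$ leaves and leaving the rest genuinely free — in particular that the essential variables of $f$ contribute only a constant (not the vanishing factor one would get by naively lowering the cutoff to $k_n-E(f)$), that the $O(1)$ choices of ``which frozen leaf joins which class'' and the polarity bookkeeping are $\Theta(1)$ uniformly over both regimes $k_n\le M_n$ and $k_n\ge M_n$, and that the double-counting from accidental coincidences among free leaves is, as in Proposition~\ref{prop:tauto}, of lower order $o(\rat_n^{\,R(f)+p})$.
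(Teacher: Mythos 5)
Your proposal follows essentially the same route as the paper's proof: separate a structural factor, handled by marking the restricted pattern leaves via $\partial_x^{i}\ell(xz,I(z))$ and invoking sub-criticality so that the coefficient ratio against $I_n$ tends to a positive constant, from a labelling factor $B_{n-R(f)-p,\,k_n}/B_{n,k_n}$, which is then identified with $\Theta(\rat_n^{R(f)+p})$ via the lemmas of Section~\ref{sec:technical}. If anything you are more explicit than the paper on the telescoping of $B_{n-r,k_n}/B_{n,k_n}$ into $r$ consecutive ratios and on the double-counting caveats, so no gap to report.
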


\begin{proof}
Let $E_n$ be the number of trees of size $n$ in $\mc{E}$. We will denote by $i$ the number of leaves that are involved in the $p$ $(L,\Gamma_f)$-restrictions of the expansion tree: $i$ is at least $p+1$ and at most $2p$. With negligible double-counting,
\[\mu_n(\mc{E}) = \frac{E_n}{T_n} = \sum_{i=p+1}^{2p}[z^{n-L(f)}]\frac{\partial ^i}{i!\partial x^i}\left(\ell(xz,I(z))\right)_{|x=1} \frac{2^n B_{n-p-R(f),k_n}}{2^n I_n B_{n,k_n}}.\]
Since $L$ is sub-critical for $\mc{A}$, 
\[\sum_{i=p+1}^{2p}\frac{[z^{n-L(f)}]\nicefrac{\partial ^i}{i!\partial x^i}\left(\ell(xz,I(z))\right)_{|x=1}}{I_n} \sim \alpha\cdot \frac{I_{n-L(f)}}{I_n} \sim \left(\frac18\right)^{L(f)} >0\]
asymptotically when $n$ tends to infinity. Therefore, in view of Section~\ref{sec:technical},
\[\mu_{n}(\mc{E}) \sim \alpha\cdot \rat_n^{R(f) + p}.\]
\indent \hfill\qed
\end{proof}

Consider the family of trees obtained by replacing a subtree $s$ by $s\land t_e$ where $t_e$ is a simple tautology into a minimal tree of $f$. Let us denote by $E_n$ the number of such trees of size $n$. Since a simple tautology has at least one $S$-repetition, thanks to~\ref{lem:restrictions},
\[\frac{E_n}{T_n} \sim  \alpha\cdot \rat_n^{R(f)+1}.\]

Thanks to Lemma~\ref{lem:Jakub}, we know that terms computing $f$ with more than $R(f)+2$ repetitions are negligible in front of the above family. Therefore, since trees with no leaf on the $(r+2)^{\text{th}}$ level are negligible, we proved Theorem~\ref{thm:theta}. 

In fact, we can show a more precise result:
\begin{theorem}
Let $f$ be a fixed Boolean function, then, asymptotically when $n$ tends to infinity,
\[\mathbb{P}_{n}\langle f\rangle \sim \lambda_{\langle f\rangle}\rat_n^{R(f)+1},\]
where $\lambda_{\langle f\rangle}$ is a positive constant.
\end{theorem}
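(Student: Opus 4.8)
The goal is to upgrade the $\Theta$-estimates of Theorem~\ref{thm:theta} into a precise asymptotic equivalent $\IP_n\langle f\rangle \sim \lambda_{\langle f\rangle}\rat_n^{R(f)+1}$. The strategy follows the three-step scheme announced in the sketch above, but now every step must be carried out with matching upper and lower bounds rather than order-of-magnitude bounds. First I would fix the pattern $R = N^{(r+1)}[N\oplus P]$, where $r = L(f)$, and show that a typical tree computing a function of $\langle f\rangle$ is obtained from a minimal tree of $f$ by a single well-controlled expansion. Concretely, I would define the family $\mathcal{E}$ of trees obtained by selecting a minimal tree $m$ of $f$, choosing one of its subtrees $s$, and replacing $s$ by $s\land t_e$ where $t_e$ ranges over simple tautologies (for the dual situation one uses $s\lor t_e$ with $t_e$ a simple contradiction). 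By Lemma~\ref{lem:restrictions} applied with the pattern $S$ and $p=1$ (a simple tautology has at least one $S$-repetition and exactly one essential-variable restriction in the relevant sense), we get $\mu_n(\mathcal{E}) \sim \alpha\cdot\rat_n^{R(f)+1}$ for an explicit positive constant $\alpha$ depending only on $\langle f\rangle$ — this is the candidate for $\lambda_{\langle f\rangle}$.

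**The upper bound.** The hard direction is to prove that no other trees contribute to leading order. Here I would argue as in the proof of Proposition~\ref{prop:tauto}: any tree $t$ computing a function of $\langle f\rangle$ either has a leaf on the $(r+2)$-th level of the $R$-pattern, in which case the Proposition preceding Lemma~\ref{lem:restrictions} forces it to have at least $R(f)+1$ $(R,\Gamma_f)$-restrictions, so by Lemma~\ref{lem:Jakub} its ratio is $O(\rat_n^{R(f)+1})$; or it has no such leaf, meaning the $R$-pattern is "shallow", and a direct structural analysis (decompose along the $N^{(r+1)}$ skeleton, each placeholder becoming a tree from $N\oplus P$ with a bounded number of pattern leaves) shows such trees either fail to compute $f$ or again carry at least $R(f)+1$ restrictions. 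The delicate point is the \emph{matching constant}: one must show that among all trees with exactly $R(f)+1$ restrictions, the asymptotically dominant ones are precisely those in $\mathcal{E}$, i.e. a single expansion by a simple tautology/contradiction rather than several smaller expansions or repetitions distributed over the minimal tree. This is done by noting (exactly as in the $\langle\true\rangle$ case) that trees realizing the same function in several "independent" ways are double-counted, and the overcounting corresponds to strictly more restrictions, hence is $o(\rat_n^{R(f)+1})$ by Lemma~\ref{lem:Jakub}.

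**The main obstacle.** The genuinely technical part is the inclusion-exclusion bookkeeping in the upper bound, exactly analogous to the $DC_n, DC_n^{(3)}, DC_n^{(4)}$ analysis in the proof of Proposition~\ref{prop:tauto} but now carried out relative to a general minimal tree of $f$ and with the extra combinatorial factor coming from the choice of which subtree $s$ of the minimal tree is expanded and from the essential-variable labellings ($\binom{k_n}{E(f)}$ choices, absorbed into the equivalence-class counting). One must check that: (i) distinct choices of $(m,s)$ give trees that are genuinely distinct equivalence classes up to lower-order corrections; (ii) the "spurious" ways a tree of $\mathcal{E}$ could also be read as an expansion somewhere else, or as a tree with $R(f)+2$ repetitions, all contribute $o(\rat_n^{R(f)+1})$; and (iii) summing the generating-function contributions over the bounded range of restriction-patterns yields a \emph{finite positive} limit, using sub-criticality of $N\oplus P$ for $\I$ together with the transfer theorem, so that $\lambda_{\langle f\rangle} := \lim \mu_n(\mathcal{E})/\rat_n^{R(f)+1}$ exists and is positive. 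Once these are in place, combining the lower bound $\IP_n\langle f\rangle \geq \mu_n(\mathcal{E}) \sim \lambda_{\langle f\rangle}\rat_n^{R(f)+1}$ with the upper bound $\IP_n\langle f\rangle \leq \mu_n(\mathcal{E}) + o(\rat_n^{R(f)+1})$ gives the claimed equivalence, and substituting the asymptotics of $\rat_n$ from Lemmas~\ref{lem:kpetit} and~\ref{lem:kgrand} recovers the two regimes of Theorem~\ref{thm:theta}.
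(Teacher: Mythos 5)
Your overall architecture matches the paper's: a lower bound coming from a family of single expansions of minimal trees controlled by Lemma~\ref{lem:restrictions}, and an upper bound showing that trees with at least $R(f)+2$ repetitions are negligible by Lemma~\ref{lem:Jakub} while trees with exactly $R(f)+1$ of them are minimal trees expanded once. There is, however, a genuine gap in your choice of the dominant family $\mathcal{E}$: you restrict it to expansions $s\mapsto s\land t_e$ with $t_e$ a simple tautology (and the dual with contradictions). This family does \emph{not} exhaust the trees computing $f$ with exactly $R(f)+1$ $(N\oplus P,\Gamma_f)$-restrictions. A restriction is either a repetition \emph{or} a pattern leaf labelled by an essential variable of $f$; consequently an expansion tree carrying a single occurrence of an essential variable of $f$ on a $\land$-path (resp.\ $\lor$-path), attached by the dual connective at a suitable node of the minimal tree, also preserves the computed function and also contributes at order $\rat_n^{R(f)+1}$ by Lemma~\ref{lem:restrictions} with $p=1$. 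These are exactly the X-expansions of the paper. Your claim that ``among all trees with exactly $R(f)+1$ restrictions, the asymptotically dominant ones are precisely those in $\mathcal{E}$'' is therefore false, the upper and lower bounds do not match, and the constant you would extract is strictly too small. The projection case (Appendix~\ref{sec:projections}) already exhibits the problem: type-T simple-$x$ (your $\mathcal{E}$) have ratio $\sim\tfrac38\rat_n$, while type-X simple-$x$ add $\tfrac14\rat_n$, for the correct total $\tfrac58\rat_n$.

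To close the gap you need the paper's additional classification step. After showing, via the wildcard/simplification argument on the $(r+3)$-rd level of the pattern $\bar{R}=N^{(r+1)}[(N\oplus P)^2]$, that a tree computing $f$ with exactly $R(f)+1$ $\bar{R}$-repetitions is a minimal tree expanded once at a single node (the lowest common ancestor of the wildcards), you must classify the possible expansion trees $t_e$ according to whether their unique $((N\oplus P)^2,\Gamma_f)$-restriction is a repetition (then $t_e$ yields a simple tautology or contradiction, i.e.\ a T-expansion) or an essential-variable occurrence (then it yields an X-expansion), and sum \emph{both} contributions into $\lambda_{\langle f\rangle}$. The remainder of your plan --- negligibility of higher restriction counts, the inclusion--exclusion for double counting as in the proof of Proposition~\ref{prop:tauto}, and sub-criticality of the patterns for $\I$ giving a finite positive limiting constant --- is in line with what the paper does.
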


The key point of the proof of this theorem is that a typical tree computing a function from $\langle f\rangle$ is a minimal tree of this function which has been expanded once. In the following, we will only consider two different expansions:
\begin{definition}[cf. Figure~\ref{fig:exp}]
Recall that an {\bf expansion} of a tree $t$ is a tree obtained by replacing a subtree $s$ of $t$ by $s\diamond t_e$ (or $t_e\diamond s$) where $\diamond\in\{\land,\lor\}$.

An expansion is a {\bf T-expansion} if the expansion tree $t_e$ is a simple tautology and the connective $\diamond$ is $\land$ (or a simple contradiction and the connective $\diamond$ is $\lor$).

An expansion is a {\bf X-expansion} if the expansion tree $t_e$ has a leaf linked to the root by a $\land$-path (resp. a $\lor$-path) and the $\diamond$ connective is a $\lor$ (resp. $\land$).
\end{definition}

\begin{figure}[t]
\begin{center}
\begin{tabular}{m{5cm}m{0.5cm}m{5cm}}
\begin{tikzpicture}[level 1/.style={sibling distance=2cm}, level 2/.style={sibling distance=1cm,level distance=1cm}, scale=0.6]
\node [circle] (z){root}
	child[level distance=5cm]
	child[level distance=2.5cm] {node (x){$\upsilon$}
		child[level distance=1cm]
		child[level distance=1cm]
		}
	child[level distance=2.5cm] {node (m){} edge from parent[draw=none]}
	child[level distance=5cm]
;
\draw (z-1)--(z-4);
\draw[dashed,white] (z)--(x);
\draw (x-1)--(x-2);
\end{tikzpicture}
&
{\Large $\leadsto$}
&
\hspace*{1cm}
\begin{tikzpicture}[level 1/.style={sibling distance=2cm}, level 2/.style={sibling distance=1cm,level distance=1cm}, scale=0.6]
\node [circle] (z){root}
	child[level distance=5cm]
	child[level distance=2.5cm] {node (diamant){$\diamond$}
		child {node (bla){$t_e$}}
		child {node (x){$\upsilon$}
			child[level distance=1cm]
			child[level distance=1cm]
			}
		}
	child[level distance=2.5cm] {node (m){} edge from parent[draw=none]}
	child[level distance=5cm]
;
\draw (z-1)--(z-4);
\draw[dashed,white] (z)--(x);
\draw (x-1)--(x-2);
\end{tikzpicture}
\end{tabular}
\end{center}
\caption{An expansion at node $\upsilon$. Note that the expansion tree $t_e$ could have been on the right size of the $\diamond$-connective instead of its left side.}
\label{fig:exp}
\end{figure}
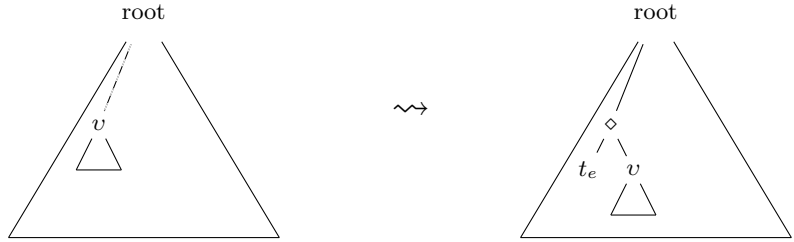

\begin{lemma}
The ratio of minimal trees of $f$ expanded once verifies, asymptotically when $n$ tends to infinity
\[\mu_{n}(E[\mc{M}_f]) = \alpha\cdot \rat_n^{R(f)+1} + o\left(\rat_n^{R(f)+1}\right).\]
\end{lemma}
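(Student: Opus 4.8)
The plan is to mirror the two-step argument already carried out for tautologies and projections in Appendix~\ref{sec:tauto} and Appendix~\ref{sec:projections}, but now applied to a minimal tree of $f$ together with an expansion tree. Concretely, fix a minimal tree of $\langle f\rangle$ of size $L(f)$ and pick one of its subtrees $s$ to be the expansion site; replacing $s$ by $s\diamond t_e$ (with $\diamond\in\{\land,\lor\}$ and $t_e$ an appropriate expansion tree) produces a tree of $\mathcal{E}[\mathcal{M}_f]$ whose size is $n$ as soon as $t_e$ has size $n-L(f)$. The generating-function bookkeeping is then identical in spirit to the projection lemmas: the choice of minimal tree and of expansion site contributes an $O(1)$ combinatorial factor, the structure of $t_e$ is enumerated by a derivative of the pattern generating function $\ell(xz,I(z))$ evaluated at $x=1$, the polarities give a $2^{n}$ that cancels, and the variable-labelling constraints give a ratio $B_{n-\bullet,k_n}/B_{n,k_n}$.

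First I would set up the count of $E[\mathcal{M}_f]_n$ exactly as in Lemma~\ref{lem:restrictions}: a T-expansion forces at least one $S$-repetition in $t_e$ (a simple tautology must contain an $x/\bar x$ pair), while an X-expansion forces, by construction, a repetition between the leaf at the expansion node and a leaf inside $t_e$ linked to it by a single-connective path. In both cases the expanded tree carries exactly $R(f)$ repetitions coming from the (fixed) minimal tree plus exactly one more coming from the expansion, so it has $R(f)+1$ repetitions in total. Applying Lemma~\ref{lem:restrictions} with $p=1$ — or redoing its short computation directly — gives
\[
\mu_n(E[\mathcal{M}_f]) = \frac{E[\mathcal{M}_f]_n}{T_n} \sim \alpha\cdot\rat_n^{R(f)+1}
\]
for a positive constant $\alpha$, where the value of $\alpha$ is assembled from $(1/8)^{L(f)}$, the number of minimal trees of $\langle f\rangle$, the number of admissible expansion sites, the factor $2$ for left/right placement of $t_e$, the factor $2$ for the choice of $\diamond$ (tautology/contradiction under the $\land/\lor$ duality), and the limiting constant $\tilde I_n/I_n = 3$ appearing in Proposition~\ref{prop:tauto}.

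The only genuine subtlety — the step I expect to be the main obstacle — is making the error term $o(\rat_n^{R(f)+1})$ rigorous, i.e. controlling the double-counting inside $E[\mathcal{M}_f]_n$. A tree may arise from several choices of minimal tree, expansion site, and expansion tree simultaneously (for instance when $t_e$ itself contains several $x/\bar x$ witnessing pairs, exactly as in the $DC_n$ analysis of Proposition~\ref{prop:tauto}). The remedy is the same inclusion–exclusion device used there: over-counted configurations are detected by the presence of an \emph{extra} repetition beyond the $R(f)+1$ already accounted for, hence each such configuration is weighted by a generating-function derivative of one higher order, which by sub-criticality of the pattern language for $\mathcal{I}$ and by Lemma~\ref{lem:Jakub} contributes a ratio of order $\rat_n^{R(f)+2} = o(\rat_n^{R(f)+1})$. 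Collecting the main term and this negligible correction yields
\[
\mu_n(E[\mathcal{M}_f]) = \alpha\cdot\rat_n^{R(f)+1} + o\!\left(\rat_n^{R(f)+1}\right),
\]
which is the claim. The remaining ingredients — square-root singularity of $I(z)$ at $1/8$, transfer theorems à la \cite[Theorem VII.8]{FS09}, and the asymptotics of $\rat_n$ from Lemmas~\ref{lem:kpetit} and~\ref{lem:kgrand} — are all already available and enter only through the black boxes just cited.
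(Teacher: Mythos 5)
Your proposal matches the paper's argument: the paper proves this lemma simply by invoking Lemma~\ref{lem:restrictions} with $p=1$ (each admissible expansion tree carrying exactly one $(L,\Gamma_f)$-restriction, whether a repetition from a simple tautology/contradiction or an essential-variable occurrence in an X-expansion), which is precisely what you do, and your discussion of double-counting is the same device already absorbed into the proof of Lemma~\ref{lem:restrictions} and Proposition~\ref{prop:tauto}. The only minor imprecision is describing the X-expansion's contribution as a repetition rather than an essential-variable restriction, but this does not affect the count $p=1$ or the conclusion.
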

This lemma is a direct consequence of Lemma~\ref{lem:restrictions}.

%\begin{lemma}
%The ratio of minimal trees of $f$ expanded at least twice verifies
%\[\mu_n(E^{\geq 2}[\mc{M}_f] = o\left(\left(\frac{\ln n}{n}\right)^{R(f)+1}\right).\]
%\end{lemma}

\begin{lemma}
Let $f$ be a fixed Boolean function, $\Gamma_f$ the set of its essential variables and $\mc{M}_f$ the set of minimal trees of $f$.
\[\mathbb{P}_n\langle f\rangle \sim \mu_n(E[\mc{M}_f])\text{ when }n\to+\infty.\]
\end{lemma}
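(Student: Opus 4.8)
The plan is to establish the asymptotic equivalence $\mathbb{P}_n\langle f\rangle \sim \mu_n(E[\mc{M}_f])$ by a two-sided argument: the lower bound is already in hand, so the real content is the matching upper bound on $\mathbb{P}_n\langle f\rangle$. The lower bound is immediate, since every tree in $E[\mc{M}_f]$ (a minimal tree of $f$ expanded once by a $T$- or $X$-expansion) genuinely computes a function in $\langle f\rangle$, so $T_n\langle f\rangle \geq E_n$ and hence $\mathbb{P}_n\langle f\rangle \gtrsim \mu_n(E[\mc{M}_f]) = \alpha\cdot\rat_n^{R(f)+1} + o(\rat_n^{R(f)+1})$ by the preceding lemma. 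The work is to show $\mathbb{P}_n\langle f\rangle \lesssim \alpha\cdot\rat_n^{R(f)+1}$, i.e.\ that essentially no tree computing a function of $\langle f\rangle$ escapes the description ``minimal tree expanded once''.

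For the upper bound I would argue exactly as in the tautology case (proof of Proposition~\ref{prop:tauto}) combined with the Proposition on $R$-pattern restrictions stated just above. First, decompose $R\langle f\rangle$-computing trees according to the pattern $R = N^{(r+1)}[N\oplus P]$: a tree computing $f$ either has no leaf on the $(r+2)$-th level of the $R$-pattern, or it does. In the first case the tree-structure is confined to a subcritical pattern of bounded depth, and Lemma~\ref{lem:Jakub} (applied with the fact that such trees carry at least $R(f)$ repetitions among the first $r+1$ levels plus the restriction count) forces the ratio to be $O(\rat_n^{R(f)+1})$ or smaller — in fact this regime is negligible in front of $\rat_n^{R(f)+1}$. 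In the second case, the Proposition above guarantees at least $R(f)+1$ $(R,\Gamma_f)$-restrictions; by Lemma~\ref{lem:Jakub} the subset with \emph{strictly more} than $R(f)+1$ restrictions has ratio $O(\rat_n^{R(f)+2}) = o(\rat_n^{R(f)+1})$, so only trees with exactly $R(f)+1$ restrictions contribute at the leading order. For those, one shows by the same simplification/assignment arguments used for simple tautologies that the tree must in fact be a minimal tree of $f$ into which a single subtree has been plugged via a $T$- or $X$-expansion — any other configuration either fails to compute $f$ or carries an extra restriction. Counting these, with the customary negligible double-counting correction (handled again via Lemma~\ref{lem:Jakub} on the over-counted families, as in the $DC_n^{(3)}, DC_n^{(4)}$ step of Proposition~\ref{prop:tauto}), yields exactly $\mu_n(E[\mc{M}_f])$ up to $o(\rat_n^{R(f)+1})$.

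The main obstacle I expect is the combinatorial case analysis showing that a tree computing $f$ with exactly $R(f)+1$ $(R,\Gamma_f)$-restrictions and a leaf on level $r+2$ is \emph{necessarily} (up to lower-order families) a single-expansion of a minimal tree — i.e.\ pinning down that both the shape of the ``plugged'' subtree and its attachment are of type $T$ or $X$. This is the analogue of the delicate argument in the second half of the proof of Proposition~\ref{prop:tauto} (the discussion of the $\land$-node $\nu$ between the repeated leaf and the root), but now it must be run for an arbitrary minimal skeleton of $f$ rather than the trivial skeleton of $\true$: one must rule out expansions grafted in a way that creates a spurious extra essential variable or repetition, and one must check that the expansion tree's internal structure (a simple tautology/contradiction, or a leaf joined to its root by a pure $\lor$- or $\land$-path) is the only possibility consistent with ``one restriction'' and ``still computes $f$''. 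The rest — the subcriticality bookkeeping, the generating-function extraction of $[z^{n-L(f)}]\partial_x^i\ell(xz,I(z))/I_n$, and the reduction to $\rat_n$ via Section~\ref{sec:technical} — is routine given Lemmas~\ref{lem:Jakub} and~\ref{lem:restrictions}.
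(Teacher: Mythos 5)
Your overall strategy coincides with the paper's: the lower bound is immediate, trees with at least $R(f)+2$ repetitions of the relevant pattern are discarded via Lemma~\ref{lem:Jakub}, trees missing the deep level of the pattern are exponentially negligible, and the whole content of the lemma is that a tree computing $f$ with exactly $R(f)+1$ such repetitions is a minimal tree expanded once. But that last step --- which you yourself flag as ``the main obstacle I expect'' --- is precisely the entire content of the paper's proof, and your proposal does not supply it; saying that ``one shows by the same simplification/assignment arguments used for simple tautologies'' that the tree is a single expansion of a minimal tree is not a proof, and the tautology argument does not transfer verbatim, since for $\true$ there is no minimal skeleton to recover. The missing argument is the following. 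One first rules out, by a falsification/simplification argument, the case where an essential variable occurs among the pattern leaves of the deepest level. In the remaining case one replaces the deepest-level placeholders and all non-essential, non-repeated leaves by wildcards and simplifies; the simplification deletes at least one essential-or-repeated leaf, so the resulting tree $t^\star$ has at most $L(f)$ leaves, still computes $f$, and is therefore a minimal tree. The decisive point --- the one that forces a \emph{single} expansion rather than several scattered graftings --- is that the lowest common ancestor of all the wildcards must itself be suppressed during the simplification: otherwise two wildcards would be simplified independently, deleting at least two essential-or-repeated leaves and leaving a tree with at most $L(f)-1$ leaves computing $f$, contradicting minimality. The subtree rooted at that common ancestor is then the unique expansion tree $t_e$, and the original tree is exhibited as an element of $E[\mc{M}_f]$. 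Without this argument you have no control on the number or location of the grafted subtrees, so the identification of the exactly-$(R(f)+1)$ stratum with $E[\mc{M}_f]$ is not established.

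A secondary, fixable imprecision: you invoke Lemma~\ref{lem:Jakub} to discard trees with more than $R(f)+1$ \emph{restrictions}, but that lemma as stated only bounds ratios in terms of \emph{repetitions}; occurrences of the finitely many variables of $\Gamma_f$ are not covered by it and need the separate accounting of Lemma~\ref{lem:restrictions} (this is exactly why the paper phrases the negligibility step in terms of $\bar R$-repetitions rather than restrictions).
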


\begin{proof}
Let $t$ be a term computing $f$. Such a term must have at least $R(f)+1$ $\bar{R}$-repetitions. Moreover, thanks to Lemma~\ref{lem:Jakub}, trees with at least $R(f)+2$ $\bar{R}$-repetitions are negligible. We will show that a tree with exactly $R(f)+1$ $\bar{R}$-repetitions is in fact a minimal tree expanded once.

The term $t$ must also have $R(f)+1$ $R$-repetitions and therefore, there is no additional repetition when we consider the $(r+3)^{\text{st}}$ level of the $\bar{R}$-pattern.

Let $i$ be the first level such that the number of $(N^{(i)},\Gamma_f)$-restrictions is equal to the number of $N^{(i-1)}$-restrictions. Since there must be a restriction on the first level, $i\leq r+1$.

\paragraph*{First Case:}Assume that an essential variable $\alpha$ appears on the pattern leaves of the $(r+3)^{\text{th}}$ level.  
Therefore, $t$ has at most $L(f)$ $(N^{(i)},\Gamma_f)$-restrictions. Let us replace the placeholders of the $(i-1)^{\text{th}}$ level by $\false$ and assign all the remaining non-essential variables to $\false$. Simplify the tree to obtain a new and/or tree denoted by $t^{\star}$. The leaves of this tree are former $N^{(i-1)}$-pattern leaves of $t$, labelled by essential variables and $t^{\star}$ still computes $f$. But the variable $\alpha$ is essential for $f$: thus it must still appear in the leaves of $t^{\star}$, and by deleting its occurence in the leaves of the $(r+3)^{\text{th}}$ level, we deleted one repetition. Therefore, $t^{\star}$ has at most $L(f)-1$ leaves which is impossible!

\paragraph*{Second Case:}There is no essential variable among the the pattern leaves of the $(r+3)^{\text{th}}$ level. Since there is also no repetition at this level, we can replace the placeholders of the level $(r+3)$ to wildcards. We also replace the remaining non essential and non-repeated variables by wildcards. We then simplify the wildcards and obtained a simplified tree $t^{\star}$, computing $f$, with no wildcards and which leaves are former leaves of the trees $t$, essential or repeated. During the simplification process, we have deleted at least one of these leaves and therefore $t^{\star}$ has at most $L(f)$ leaves: it is a minimal tree of $f$.\\
Let us consider the following fact:
The lowest common ancestor of all the wildcards in $t$ has
been suppressed during the simplification process.\\
Assume that this fact is false: then two wildcards have been simplified independently during the simplification process, and thus, at least two essential or repeated variables have been deleted. The tree $t^{\star}$ has thus at most $L(f)-1$ leaves and computes $f$, which is impossible since $L(f)$ is the complexity of $f$.

Let us denote by $t_e$ the subtree rooted at $\upsilon$ the lowest common ancestor of the wildcards. Thus a typical tree computing $f$ is a minimal tree of $f$ in which we have plugged a specific expansion tree $t_e$.
\hfill\qed
\end{proof}

\begin{lemma}
Let $t$ be a typical tree computing $f$. The expansion tree $t_e$ is either a simple tautology (or simple contradiction), or an $x$-expansion - i.e. a tree with one $\land$-leaf (resp. $\lor$-leaf) labelled by an essential variable of $f$.
\end{lemma}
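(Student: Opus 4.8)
The plan is to read off the shape of $t_e$ from what has already been proved: by the previous lemma a typical tree $t$ computing $f$ is a minimal tree $t^{\star}$ of $f$ in which a single expansion has been plugged, i.e.\ some subtree $s$ of $t^{\star}$ has been replaced in $t$ by $s\diamond t_e$ (or $t_e\diamond s$) with $\diamond\in\{\land,\lor\}$, and moreover every wildcard of $t$ lies inside $t_e$ and may be evaluated to $\true$ or $\false$, independently of the others, without changing the function computed by $t$. Thanks to the duality between $\land$ and $\lor$ and between positive and negative literals it suffices to treat $\diamond=\land$; the case $\diamond=\lor$ is obtained by dualising, which turns simple tautologies into simple contradictions and $\lor$-leaves into $\land$-leaves. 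The first step would be to pin down that $t_e$ carries exactly one $(N\oplus P,\Gamma_f)$-restriction: indeed $t$ has exactly $R(f)+1$ $\bar R$-repetitions while $t^{\star}$ already accounts for $R(f)$ of them among its essential-variable leaves, and since removing $t_e$ (together with the $\diamond$-node) restores $t^{\star}$ and every non-wildcard leaf of $t_e$ is labelled by an essential variable of $f$ or by a repeated variable, the constrained material genuinely contributed by $t_e$ can only amount to one restriction. So either this restriction is a repetition of a single variable $x$ (two pattern leaves of $t_e$ carrying $x/x$ or $x/\bar x$, with $x\notin\Gamma_f$), or it is one pattern leaf of $t_e$ carrying an essential variable $x\in\Gamma_f$ occurring only once in $t_e$.

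Next I would dispose of the repetition case and show $t_e\in ST$. An equal-polarity repetition $x/x$ is impossible: evaluating those two leaves and all wildcards of $t_e$ to $\false$ turns $t_e$ into the constant $\false$, hence turns $s\land t_e$ into a constant subtree of $t$, which forces a constant to propagate upward and contradicts either that $t$ computes $f$ or the minimality of $t^{\star}$ once $t_e$ is removed (the degenerate cases $f=\true,\false$ being already settled in Section~\ref{sec:proba}). So the repetition is an $x/\bar x$ one, and the same ``no free constant'' argument, run against the minimality of $t^{\star}$, shows that $t_e$ must compute $\true$ under every evaluation of its wildcards, i.e.\ $t_e$ is a tautology whose only repetition is this $x/\bar x$ one. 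The argument already used inside the proof of Proposition~\ref{prop:tauto} -- a tautology with a single such repetition is a simple tautology, since an $\land$-node on a path from the root to $x$ or to $\bar x$ would force a second repetition -- then applies verbatim to $t_e$.

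Finally, in the essential-variable case I would show $t_e$ is an $x$-expansion. Because the wildcards are free and $x$ is independent of them, the function of $s\land t_e$ must be insensitive to the wildcards and, after $t_e$ is removed, equal to the function of $s$ in $t^{\star}$; using that an and/or formula is monotone in each of its leaves, this forces $t_e$ to evaluate to $\true$ whenever $x=1$, whatever the wildcards. As $x$ occurs only once in $t_e$, setting that leaf to $1$ can trivialise $t_e$ only if every internal node on the path from it to the root of $t_e$ is a $\lor$ -- an $\land$ on this path would require its sibling subtree to be a tautology, hence to contain a further repetition, which the restriction count forbids. Thus $x$ is a $\lor$-leaf of $t_e$ and the expansion is exactly an $x$-expansion with $\diamond=\land$; dualising yields the $\lor$-case, with an $\land$-leaf and $\diamond=\lor$.

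The main obstacle is the combination of the restriction bookkeeping with the ``no free constant'' arguments: one must make precise that $t_e$ contributes exactly one restriction, control exactly which leaves may occur in $t_e$, and -- most delicately -- turn the informal ``$s\diamond t_e$ and $s$ are interchangeable inside $t^{\star}$'' into an honest statement about Boolean functions by leaning on the minimality of $t^{\star}$, so as to exclude the borderline shapes for $t_e$ (reducing under a wildcard assignment to a bare literal, or sitting just above a constant subtree, etc.).
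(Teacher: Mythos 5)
Your proposal follows essentially the same route as the paper: reduce to the case where the expansion tree $t_e$ carries exactly one $(N\oplus P,\Gamma_f)$-restriction (zero being excluded by minimality of $t^{\star}$, two or more by the negligibility results already established), then split according to whether that restriction is a repetition (forcing a simple tautology or contradiction) or a single occurrence of an essential variable (forcing an $X$-expansion). In fact you supply more detail than the paper, whose proof leaves both sub-cases at the level of ``one can show''; your arguments for them are consistent with the techniques used in the proof of Proposition~\ref{prop:tauto}.
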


\begin{proof}
As shown in the former lemma, a typical tree computing $f$ is a minimal tree of $f$ on which has been plugged an expansion tree $t_e$.
\paragraph*{First Case:}Let us assume that $t_e$ has no $(N\oplus P)$-repetition and no essential variable among its $(N\oplus P)$-pattern leaves. Then, we can replace $t_e$ by a wildcard and simplify this wildcard. This simplification suppresses at least one other leaf of the tree: the obtained tree is then smaller than the original minimal tree, and still computes $f$. It is impossible.
\paragraph*{Second Case:}Let us assume that $t_e$ has at least two $((N\oplus P)^2,\Gamma_f)$-restrictions. Thanks to Lemma~\ref{lem:restrictions}, this family of expanded trees is negligible.
\paragraph*{Third Case:}Let us assume that $t_e$ has exactly one $((N\oplus P)^2,\Gamma_f)$-restrictions. Then it must be a $(N\oplus P,\Gamma_f)$-restriction (cf. First Case).
\begin{itemize}
\item if it is a repetition, than one can show that it must be a simple tautology or a simple contradiction.
\item if it is an essential variable, one can show that it must be an $X$-expansion.
\end{itemize}
\indent \hfill\qed
\end{proof}

\end{document}